\definecolor{NBrown}{HTML}{66220C}
\definecolor{NAqua}{HTML}{00698C}
\definecolor{ForestGreen}{HTML}{228b22}
\newtheorem{theorem}{Theorem}[section]
\newtheorem{lemma}[theorem]{Lemma}
\newtheorem{proposition}[theorem]{Proposition}
\newtheorem{corollary}[theorem]{Corollary}
\newtheorem{remark}[theorem]{Remark}
\newtheorem{claim}[theorem]{Claim}
\newtheorem{definition}[theorem]{Definition}
\newcommand{\pr}[1]{\mathbb{P}\!\left(#1\right)}
\newcommand{\E}[1]{\mathbb{E}\!\left[#1\right]}
\newcommand{\estart}[2]{\mathbb{E}_{#2}\!\left[#1\right]}
\newcommand{\prstart}[2]{\mathbb{P}_{#2}\!\left(#1\right)}
\newcommand{\prcond}[3]{\mathbb{P}_{#3}\!\left(#1\;\middle\vert\;#2\right)}
\newcommand{\tplus}{{\tmix^{+}}}
\newcommand{\Pb}[0]{\mathbb{P}}
\DeclareMathOperator{\LE}{LE}
\DeclareMathOperator{\LERW}{LERW}
\DeclareMathOperator{\UST}{UST}
\DeclareMathOperator{\diam}{Diam}
\DeclareMathOperator{\height}{Height}
\renewcommand{\epsilon}{\varepsilon}
\newcommand{\pand}[0]{\ \text{and} \ }
\DeclareMathOperator{\bin}{Bin}
\newcommand{\RR}{\mathbb{R}}
\newcommand{\XXbb}{\mathbb{X}}
\newcommand{\NN}{\mathbb{N}}
\newcommand{\T}{\mathcal{T}}
\newcommand{\Capp}{\mathrm{Cap}}
\newcommand{\Capk}{\mathrm{Cap}_k}
\newcommand{\tmix}{t_{\mathrm{mix}}}
\newcommand{\Xindj}{X^{\mathrm{ind}, j}}
\newcommand{\Xindone}{X^{\mathrm{ind}, 1}}
\newcommand{\Xindjj}{X^{\mathrm{ind}, J}}
\newcommand{\close}{\mathrm{Close}}
\newcommand{\eps}{\varepsilon}
\newcommand{\dGHP}{d_{\mathrm{GHP}}}
\newcommand{\dGP}{d_{\mathrm{GP}}}
\newcommand{\GHP}{\mathrm{GHP}}
\newcommand{\neweps}{\kappa}
\newcommand{\Tkn}{T^{(k)}_n}
\newcommand{\Tkkn}{T^{(k-1)}_n}
\newcommand{\Tkkkn}{T^{(k+1)}_n}
\newcommand{\Tk}{T^{(k)}}
\newcommand{\SBk}{\textsf{SB}^{(k)}}
\renewcommand\@dotsep{10000}
\begin{document}

\title{The GHP scaling limit of uniform spanning trees of dense graphs}
\author{Eleanor Archer\thanks{\'Equipe Modal'X, Universit\'e Paris Nanterre, Batiment G, 200 Avenue de la R\'epublique, 92000 Nanterre, France. Email: \protect\url{eleanor.archer@parisnanterre.fr}} \hspace{1cm} Matan Shalev\thanks{School of Mathematical Sciences, Tel Aviv University, Tel Aviv, Israel. Email: \protect\url{matanshalev@mail.tau.ac.il}}}

	\maketitle
	\begin{abstract} 
	We consider dense graph sequences that converge to a connected graphon and prove that the GHP scaling limit of their uniform spanning trees is Aldous' Brownian CRT. Furthermore, we are able to extract the precise scaling constant from the limiting graphon. As an example, we can apply this to the scaling limit of the uniform spanning trees of the Erd\"os-R\'enyi sequence $(G(n,p))_{n \geq 1}$ for any fixed $p \in (0,1]$, and sequences of dense expanders. A consequence of GHP convergence is that several associated quantities of the spanning trees also converge, such as the height, diameter and law of a simple random walk.
	\end{abstract}
	\section{Introduction}

Uniform spanning trees (USTs) are fundamental objects in probability theory and computer science, with close connections to many other areas of mathematics including electrical network theory \cite{kirchhoff1847ueber}, loop erased random walks \cite{Wil96} and random interlacements \cite{hutchcroft2018interlacements}, to name but a few. 

It was recently shown in \cite{ANS2021ghp}, building on the work of \cite{PeresRevelleUSTCRT}, that the universal metric measure space scaling limit of USTs of a large class of graphs is Aldous' Brownian continuum random tree (CRT). The purpose of the present paper is to extend this result to sequences of dense graphs encoded by graphons. Due to a transitivity assumption in previous papers, these USTs are not covered by the results of \cite{PeresRevelleUSTCRT} and \cite{ANS2021ghp}, but here we establish that the CRT is nevertheless still the scaling limit. In addition we are able to express the precise scaling factor in terms of the encoding graphon, making the result more precise than that in \cite{ANS2021ghp} and demonstrating that the notion of graphon convergence is enough to fully determine the UST scaling limit.

The CRT, introduced by Aldous \cite{AldousCRTI, AldousCRTII, AldousCRTIII}, is a well-known object in probability theory, and is perhaps best-known as the scaling limit of critical finite variance Galton--Watson trees. We do not attempt to give a full introduction here; we will give a formal definition in Section \ref{sctn:stick breaking background} and we refer to the survey of Le Gall \cite{le2005random} for further background.

A \textbf{weighted graph} $(G, w)$ is a graph $G=(V,E)$ in which we assign to each edge $e\in E$ a non-negative weight $w_e$. In this paper, we will work with sequences of weighted graphs with no loops or multiple edges in which $w_e \in [0,1]$ for each $e\in E$. In the case where all edge-weights are equal to $1$, we say that the graph is \textbf{simple}. We extend the definition of vertex degree to weighted graphs by defining $\deg v$ to be the sum of the weights of the edges emanating from $v$.

The \textbf{uniform spanning tree} of a weighted graph $(G,w)$ is a random spanning tree chosen from the set of all spanning trees of $G$ where each spanning tree $t$ is chosen with probability proportional to $\prod_{e\in t} w_e$.

We will say that such a sequence $(G_n)_{n \geq 1}$ of weighted graphs is \textbf{dense} if there exists $\delta>0$ such that $\Delta_n := \min_{v \in G_n} \deg(v) \geq \delta \# V(G_n)$ for all $n$. The notion of convergence of dense graph sequences is naturally captured by objects known as \textit{graphons}, introduced by Lov{\'a}sz and Szegedy \cite{lovasz2006limits} and also Borgs, Chayes, Lov{\'a}sz, S{\'o}s and Vesztergombi \cite{borgs2008convergent} for this purpose. See also \cite{glasscock2015graphon} for a very quick introduction. A \textbf{graphon} $W$ is a symmetric measurable function from $[0,1]^2$ to $[0,1]$ and can be thought of as (roughly) the continuum analogue of an adjacency matrix. Using this viewpoint, there is a natural notion of distance between discrete graphs and graphons, known as the \textbf{cut-distance}, which we will define in \cref{sctn:graphon background}. This allows us to consider the notion of convergence to a given graphon $W$.

Graphons are commonly used in combinatorics and computer science to analyze large dense graphs. For example, they have been used in extremal graph theory \cite{cooper2018finitely}, mean-field games \cite{caines2019graphon}, analysis of large graphs \cite{krioukov2016clustering}, and to study the thermodynamic limit of statistical physics systems \cite{medvedev2014nonlinear, giardina2021approximating}, to give a very non-exhaustive list.

Given a graphon $W$, define a constant
\begin{equation}\label{eqn:alphaW}
\alpha_W = \frac{1}{\left(\int_{[0,1]^2}W(x,y)dxdy\right)^2} \cdot \int_{[0,1]} \left(\int_{[0,1]}W(x,y)dy\right)^2 dx.
\end{equation}

Note it follows immediately from Jensen's inequality that $\alpha_W \geq 1$, with equality if and only if $W$ is constant almost everywhere. We also say that a graphon $W$ is \textbf{connected} if for all $A \subset [0,1]$ of positive Lebesgue measure, it holds that
\begin{equation*}
    \int_{A}\int_{A^C}W(x,y)dxdy > 0.
\end{equation*}

The main result of the present paper is the following. Below, the GHP distance refers to the Gromov Hausdorff Prohorov distance between metric measure spaces; we define it in Section \ref{sctn:GHP def}.

\begin{theorem} \label{thm:main1}
Let $(G_n)_{n \geq 1}$ be a dense sequence of deterministic weighted graphs converging to a connected graphon $W$, where each $G_n$ has $n$ vertices. For each $n \geq 1$, let $\T_n$ be a uniform spanning tree of $G_n$. Denote by $d_{\T_n}$ the corresponding graph-distance on $\T_n$ and by $\mu_n$ the uniform probability measure on the vertices of $\T_n$. Then 
		\begin{equation*} \label{eq:mainthm1}
		\left(\T_n,\frac{\sqrt{\alpha_W}}{\sqrt{n}} d_{\T_n} ,\mu_n\right) \overset{(d)}{\longrightarrow} (\T,d_{\T},\mu)
		\end{equation*}
		where $\alpha_W$ is defined as in \eqref{eqn:alphaW}, $(\T,d_{\T},\mu)$ is the CRT equipped with its canonical mass measure $\mu$ and $\overset{(d)}{\longrightarrow}$ denotes convergence in distribution with respect to the GHP distance between metric measure spaces.
\end{theorem}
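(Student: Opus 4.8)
The natural strategy is to use Wilson's algorithm to encode $\T_n$ via loop-erased random walks, and then to compare the resulting tree to a stick-breaking construction of the CRT. The key point is that on a dense graph the simple random walk mixes in $O(1)$ steps, so successive loop-erased walk segments behave like independent runs whose lengths are governed by the effective resistance / meeting-time structure of the graph. Concretely, I would first show that for a dense sequence converging to a connected graphon $W$, the relevant "first-order" quantity --- the expected time for a random walk started from stationarity to hit a uniformly random target vertex, suitably normalised --- converges to $\alpha_W$. This is where the graphon enters: writing $d_W = \int W$, the stationary distribution weights a vertex proportionally to its degree, and the quadratic form $\int (\int W(x,y)dy)^2 dx$ is exactly the second moment that appears when one computes the asymptotics of the Green's function / hitting times after one mixing time. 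The factor $1/d_W^2$ comes from the normalisation $\Delta_n \sim d_W n$ and from the degree-biasing. I would isolate this as a lemma: suitably rescaled hitting times, effective resistances between typical vertices, and LERW lengths all concentrate around deterministic values proportional to $\alpha_W n$.

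Given that input, the second block is to run Wilson's algorithm rooted at a uniform vertex and build the first $k$ branches of $\T_n$ by loop-erasing walks between the first $k+1$ uniformly sampled vertices. Using the mixing-time estimates, I would argue that the (rescaled) lengths of these branch segments converge jointly to the increments of the stick-breaking construction of the CRT --- i.e. to $(\sqrt{\alpha_W})^{-1}$ times a sequence of points of an inhomogeneous Poisson process on $\RR_+$ with intensity $t\,dt$ --- and that the combinatorial gluing structure (which earlier segment each new branch attaches to, and where) converges to the uniform attachment structure of stick-breaking. This gives convergence of the finite-dimensional "reduced subtrees" spanned by $k$ uniform points, for each fixed $k$, in the Gromov--Hausdorff--Prohorov sense, with the correct constant $\sqrt{\alpha_W}$.

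The third block is to upgrade finite-dimensional convergence of reduced subtrees to full GHP convergence of $(\T_n, \sqrt{\alpha_W/n}\, d_{\T_n}, \mu_n)$. This requires a tightness / global control statement: that the reduced subtree on $k$ uniform points is GHP-close to the whole tree, uniformly in $n$, once $k$ is large --- equivalently, that $\T_n$ has no long thin pieces carrying negligible mass, and that the mass measure is well-approximated by the empirical measure of the $k$ sampled points. The standard route is to bound the diameter of $\T_n$ (of order $\sqrt n$) and, more delicately, to show that the $\mu_n$-mass of the set of vertices far (in rescaled distance) from the $k$-point reduced tree tends to $0$ as $k \to \infty$, uniformly in $n$; this again reduces to second-moment estimates on LERW lengths via Wilson's algorithm. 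I expect this global/tightness step to be the main obstacle, since on general dense graphs (unlike vertex-transitive ones) the hitting-time and resistance estimates are not exactly uniform over pairs of vertices but only typically correct, so one has to control a bad set of vertices and show it carries vanishing mass --- this is presumably where the bulk of the technical work, and the need for the connectedness hypothesis on $W$, resides. Finally, assembling the three blocks and invoking the characterisation of the CRT via its stick-breaking / reduced-subtree finite-dimensional distributions yields the theorem, and the convergence of height, diameter and simple-random-walk law follows from GHP convergence by continuity of those functionals.
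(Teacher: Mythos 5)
Your overall route---Wilson's algorithm, loop-erased walks, stick-breaking comparison with the CRT, then an upgrade to GHP via a mass lower bound---is the paper's route, and your identification of the scaling constant as the quadratic form $\int(\int W)^2\,dx / (\int\!\int W)^2$ arising from degree-biasing of the stationary measure is in the right spirit (the paper realises it as $n\,\mathbb E[\pi_n(U_{\pi_n})]$, i.e.\ the normalised second moment of the stationary measure, shown in turn to approximate a capacity-density of random-walk segments). However, two of your structural claims are off in ways that matter. First, ``on a dense graph the simple random walk mixes in $O(1)$ steps'' is false: a dense graph with no further structure (e.g.\ two cliques joined by a single edge) can have polynomial mixing time. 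The connectedness of $W$ is what forces the sequence to be a $\gamma$-expander (\cref{claim:graphon is an expander}, via \cref{lem:connectivity}), which gives $\tmix = O(\log n)$ (\cref{claim:mixing time expander}); that is where connectedness does its work, not---as you suggest---primarily in controlling a bad set of vertices during the tightness step. Second, you have the balance of effort inverted: the paper explicitly says the lower-mass-bound/tightness condition follows from minor adaptations of \cite{ANS2021ghp} once one has linear minimal degree, whereas the bulk of the paper is the finite-dimensional convergence, proved by an explicit Laplacian-random-walk / capacity-matching argument, not by the coupling-to-a-reference-graph argument of earlier works. Finally, the paper factors the problem: it first proves the more general \cref{thm:main2} for dense $\gamma$-expanders with an intrinsic sequence $(\alpha_n)$, then derives \cref{thm:main1} by verifying the expander property and $\alpha_n\to\alpha_W$; keeping these steps separate is what lets one state \cref{thm:main2} without any graphon at all.
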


A single graphon can also encode sequences of \textit{random} graphs $G(k,W)_{k \geq 1}$ and $H(k,W)_{k \geq 1}$ with $k$ nodes, obtained by sampling $k$ uniform vertices $x_1, \dots, x_k$ in $[0,1]$, and either adding an edge of weight $1$ between nodes $i$ and $j$ with probability $W(x_i, x_j)$ (this is the sequence $G(k,W)_{k \geq 1}$), or instead adding an edge of weight $W(x_i,x_j)$ (this is the sequence $H(k,W)_{k \geq 1}$). We will deduce the following as a consequence of \cref{thm:main1}.

\begin{corollary}\label{cor:mainrandom}
Let $W$ be a connected graphon. Suppose that there exists $\delta>0$ such that the minimal degree of $G(n,W)$ is at least $\delta n$ with probability tending to $1$ as $n \to \infty$. For each $n \geq 1$, let $\T_n$ be a uniform spanning tree of $G(n,W)$. Denote by $d_{\T_n}$ the corresponding graph-distance on $\T_n$ and by $\mu_n$ the uniform probability measure on the vertices of $\T_n$. Then 
		\begin{equation*}
		\left(\T_n,\frac{\sqrt{\alpha_W}}{\sqrt{n}} d_{\T_n} ,\mu_n\right) \overset{(d)}{\longrightarrow} (\T,d_{\T},\mu)
		\end{equation*}
		where $(\T,d_{\T},\mu)$ is the CRT equipped with its canonical mass measure $\mu$ and $\overset{(d)}{\longrightarrow}$ denotes convergence in distribution with respect to the GHP distance between metric measure spaces.
		
		Moreover, the same statement holds for $H(n,W)$ in place of $G(n,W)$.
\end{corollary}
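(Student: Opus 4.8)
The plan is to deduce both statements from Theorem \ref{thm:main1} by showing that the random graph sequences $G(n,W)_{n\geq 1}$ and $H(n,W)_{n\geq 1}$ almost surely (or with probability tending to $1$) form dense deterministic-looking sequences converging to $W$ in the cut distance. The standard sampling theory for graphons (the "first sampling lemma" of Borgs--Chayes--Lov\'asz--S\'os--Vesztergombi, or Lov\'asz's book) tells us that with probability tending to $1$ as $n\to\infty$, both $G(n,W)$ and $H(n,W)$ converge to $W$ in cut distance. First I would invoke this together with the hypothesis that the minimal degree of $G(n,W)$ is at least $\delta n$ with probability tending to $1$: on the intersection of these two events, the realized graph sequence satisfies the hypotheses of Theorem \ref{thm:main1}. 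For $H(n,W)$ I would note that the minimal degree of $H(n,W)$ stochastically dominates that of $G(n,W)$ pointwise in the coupling where each edge weight $W(x_i,x_j)$ is compared against the indicator of the event used to build $G(n,W)$ --- more directly, since $\deg_{H}(i) = \sum_{j\neq i} W(x_i,x_j)$ concentrates around $n\int W(x_i,y)dy$ by Hoeffding, and the connectivity of $W$ forces $\int W(x,y)dy$ to be bounded below away from $0$ on all but a null set (otherwise one could find a positive-measure set $A$ on which $\int_{A^C} W = 0$), the minimal degree of $H(n,W)$ is at least $\delta' n$ whp for some $\delta'>0$ without any extra assumption.

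The slightly delicate point is the mode of convergence: Theorem \ref{thm:main1} is stated for \emph{deterministic} sequences, whereas here the graphs are random, so a priori the "event of good behaviour" depends on $n$ and we cannot simply fix one realization of the whole sequence. I would handle this by a routine subsequence argument: suppose for contradiction that the claimed GHP convergence in distribution fails; then there is a bounded continuous test function $F$ on the space of (isometry classes of) metric measure spaces, an $\eps>0$, and a subsequence $(n_k)$ along which $\bigl|\E{F(\T_{n_k}, \sqrt{\alpha_W/n_k}\, d_{\T_{n_k}}, \mu_{n_k})} - \E{F(\T, d_\T, \mu)}\bigr| \geq \eps$. Condition on the graph $G(n_k,W)$: on the whp event $\mathcal{E}_{n_k}$ described above, the conditional law of $\T_{n_k}$ is exactly the UST law of a deterministic dense graph converging to $W$, so one can apply Theorem \ref{thm:main1} --- but Theorem \ref{thm:main1} is a statement about a fixed sequence, so to use it cleanly I would instead argue that \emph{every} subsequence of $(G(n,W))$ has a further subsequence along which the realizations converge to $W$ in cut distance and are dense, which is immediate from $\pr{\mathcal{E}_n}\to 1$ via Borel--Cantelli after passing to a fast-growing sub-subsequence; along such a sub-subsequence the hypotheses of Theorem \ref{thm:main1} hold for a.e.\ realization, giving the GHP convergence and hence convergence of $\E{F(\cdot)}$, contradicting the choice of $(n_k)$.

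Concretely the steps are: (1) recall the cut-distance sampling lemma giving $G(n,W), H(n,W) \to W$ whp; (2) verify the minimal-degree condition for $H(n,W)$ from connectivity and Hoeffding's inequality, and take it as a hypothesis for $G(n,W)$; (3) combine into a single whp event $\mathcal{E}_n$ on which the realized graph satisfies the hypotheses of Theorem \ref{thm:main1}; (4) upgrade from "whp deterministic input" to "convergence in distribution of the random output" via the subsequence / Borel--Cantelli argument above, using that GHP convergence in distribution is metrized by a bounded metric on the space of metric measure spaces so it suffices to test against bounded continuous functionals. The main obstacle is really just the bookkeeping in step (4): making sure the almost-sure convergence of the deterministic realizations along sub-subsequences is correctly transferred back to convergence in distribution of the original sequence, and that the null set of "bad" points $x_i$ (where $\int W(x_i,y)dy$ is small) does not get sampled --- both are handled by standard measure-theoretic arguments, with no new probabilistic input needed beyond concentration of the sampled degrees.
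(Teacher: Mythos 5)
Your overall route --- combine the sampling lemma (\cref{lem:random graphons are close}) with \cref{thm:main1}, then transfer the deterministic-input statement to convergence in distribution for random input via a routine subsequence/Borel--Cantelli argument using bounded continuous test functionals --- is exactly the paper's approach (its proof is one line, citing precisely these two ingredients and treating the transfer step as routine). Your step~(4) is a correct and sensible elaboration of that transfer.

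However, your handling of the minimal-degree condition for $H(n,W)$ is flawed on both counts. The claimed stochastic domination is false: in the natural coupling, $\deg_{H(n,W)}(i) = \sum_{j} W(x_i,x_j)$ is the conditional \emph{mean} of $\deg_{G(n,W)}(i) = \sum_{j}\mathbbm{1}\{U_{ij} < W(x_i,x_j)\}$ given the $x_j$'s, and neither quantity pointwise dominates the other. More importantly, connectivity of $W$ does \emph{not} force $\deg_W$ to be bounded below away from zero; it only forces $\deg_W > 0$ almost everywhere. For instance, take $W(x,y) = xy$. Then $W$ is connected, since for any $A$ with $0<\mu(A)<1$ we have
$\int_A\int_{A^C} W(x,y)\,dx\,dy = \left(\int_A x\,dx\right)\left(\int_{A^C} y\,dy\right) > 0$,
yet $\deg_W(x) = x/2$ is arbitrarily small near $x=0$, so $\{x : \deg_W(x) < c\}$ has positive measure for every $c>0$. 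The parenthetical justification ``otherwise one could find a positive-measure $A$ with $\int_{A^C} W = 0$'' is a non sequitur: small degree on $A$ only gives a \emph{small} cut integral, not a vanishing one. The natural reading of ``the same statement holds for $H(n,W)$'' is that the minimal-degree hypothesis is imposed separately for $H(n,W)$, in which case your extra argument is unnecessary. If you instead wish to deduce it from the hypothesis on $G(n,W)$, the correct argument is via conditional concentration: given the $x_i$'s, $\deg_{G(n,W)}(i)$ is a sum of independent Bernoulli variables with mean $\deg_{H(n,W)}(i)$, so by a Chernoff bound $\min_i \deg_{G(n,W)}(i) \geq \delta n$ whp forces $\min_i \deg_{H(n,W)}(i) \geq \delta n/2$ whp; connectivity of $W$ plays no role here.
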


For example, this applies to the Erd\"os-R\'enyi sequence $(G(n,p))_{n \geq 1}$ for any fixed $p \in (0, 1]$, which is the sequence $(G(n,W))_{n \geq 1}$ when $W$ is the graphon that is $p$ (almost) everywhere, and in which case $\alpha_W=1$.

\cref{thm:main1} shows that graphons contain enough information to determine the scaling limit of USTs, or in other words that the GHP scaling limit is continuous with respect to the topology induced by the cut-distance. In \cite{hladky2018local}, the authors show an analogous result for the Benjamini-Schramm \textit{local limit} of the USTs appearing in \cref{thm:main1}, and show that the local limit can be characterized as a multi-type critical branching process conditioned to survive, where the offspring distributions are encoded by the limiting graphon. Additionally, the authors show that continuity also holds for the total \textit{number} of spanning trees of $G_n$, after being properly renormalized. However, they also give an example to show that this is no longer true under weaker assumptions.

Note that convergence of a graph sequence to a connected graphon does not automatically imply that the graph sequence must be dense, and in fact the local limit result for USTs of dense graphs obtained in \cite{hladky2018local} does not require this assumption. There, the authors assume only that the limiting graphon is \textbf{non-degenerate}, meaning that \begin{equation}\label{eqn: deg of graphon}
    \deg_W(x) := \int_{[0,1]} W(x,y) dy > 0 \ \ \forall x \in [0,1],
\end{equation}
and that the graph sequence is connected. In fact this implies that ``most" vertices have high degree; see \cite[Theorem 2.7 and Definition 2.6]{hladky2018local} for a precise statement. This is enough to prove a local limit statement since with high probability, the local limit will not see the exceptional vertices of low degree. On the other hand, the GHP scaling limit is a global statement and therefore we require more uniform control of the underlying graphs. One can easily see this through a simple counterexample: let $G_n$ denote the complete graph on $n-n^{2/3}$ vertices, and attach a stick of length $n^{2/3}$ to one vertex of the complete graph. The graphs still converge to the graphon that is $1$ everywhere, and the local limit of $\UST (G_n)$ is once again the \textsf{Poisson}($1$) Galton--Watson tree conditioned to survive. On the other hand, the only non-trivial compact scaling limit is a single stick, and not the CRT. One can also construct similar counterexamples with minimum degree at least $\frac{n}{\gamma_n}$ for any sequence $\gamma_n \to \infty$, meaning that the assumption of linear minimal degree is indeed necessary.

Since the local limit of the CRT is well-known \cite[Section 6]{AldousCRTI} to be Aldous' self-similar CRT (SSCRT), one can also ask whether the operations of taking scaling limits and local limits commute. In general, answering this question seems quite non-trivial, as the multitype branching process appearing as the local limit is very non-homogeneous and the offspring distributions of successive generations are not independent. However, a special case arises when the sequence $(G_n)_{n \geq 1}$ is regular. In this case the local limit is a \textsf{Poisson}($1$) Galton--Watson tree conditioned to survive, which is well-known to rescale to the SSCRT; moreover we will show in \cref{rmk:regular alpha=1} that the constant $\alpha_W$ must be equal to $1$, which entails that $\frac{1}{\alpha_W}$ is equal to the variance of the \textsf{Poisson}($1$) offspring distribution, and from which we can deduce that the operations do indeed commute in this case.

For non-regular graph sequences, the question seems a bit more subtle. While the expected number of non-backbone neighbours of the root vertex of the local limit is indeed $1$, the variance is not necessarily equal to $\frac{1}{\alpha_W}$. For example, for the complete bipartite graph $K_{\frac{2n}{3}, \frac{n}{3}}$, one can calculate using \cite[Definition 1.2]{hladky2018local} that the variance of the offspring number of the root vertex is equal to $\frac{3}{2}$, but $\frac{1}{\alpha_W}$ is equal to $\frac{8}{9}$. This does not preclude the possibility that the operations commute, since the variance in subsequent generations may converge to $\frac{1}{\alpha_W}$ in the appropriate sense. For $K_{\frac{2n}{3}, \frac{n}{3}}$ we can in fact apply results of Miermont \cite{miermontmulti} (the local limit in this case is in fact a Galton--Watson tree with two alternating types: \textsf{Poi}($2$) and \textsf{Poi}($\frac{1}{2}$)) to deduce that the operations do commute. However, in the general case the local limit is a Galton--Watson tree with uncountably many types, for which, to the best of our knowledge, scaling limits are not covered by the existing Galton--Watson tree literature.

Finally, we note that in \cite{alon2022diameter}, the authors consider similar dense graph sequences, but do not assume that the sequence converges to a graphon. Under this weaker assumption, they prove that the diameter of $\UST(G_n)$ is of order $\sqrt{n}$ with high probability. We cannot hope to prove a scaling limit result under the same hypotheses, since one can, for example, connect two copies of $K_{n/2}$ by a single edge, in which case the diameter is still of order $\sqrt{n}$ but the scaling limit is not the CRT. However, when the graphs are well-connected, we can obtain the scaling limit.

In this paper we in fact prove the following theorem. In what follows, for a given $\gamma>0$ we say that a graph $G$ is a $\gamma$-expander if for all $U \subset V(G)$, the number of edges between $U$ and $V(G) \setminus U$ is at least $\gamma|U|(|V(G)| - |U|)$.

\begin{theorem} \label{thm:main2}
Take $\gamma>0$ and $\delta>0$ and let $(G_n)_{n \geq 1}$ be a dense sequence of connected $\gamma$-expanders, where each $G_n$ has $n$ vertices and minimal degree at least $\delta n$. For each $n \geq 1$, let $\T_n$ be a uniform spanning tree of $G_n$. Denote by $d_{\T_n}$ the corresponding graph-distance on $\T_n$ and by $\mu_n$ the uniform probability measure on the vertices of $\T_n$. Then there exists a sequence $(\alpha_n)_{n \geq 1}$, satisfying $1 \leq \alpha_n \leq \delta^{-1}$ for all $n \geq 1$, such that
		\begin{equation*}
  \left(\T_n,\frac{\sqrt{\alpha_n}}{\sqrt{n}} d_{\T_n} ,\mu_n\right) \overset{(d)}{\longrightarrow} (\T,d_{\T},\mu)
		\end{equation*}
as $n \to \infty$ where $(\T,d_{\T},\mu)$ is the CRT equipped with its canonical mass measure $\mu$ and $\overset{(d)}{\longrightarrow}$ denotes convergence in distribution with respect to the GHP distance between metric measure spaces.
\end{theorem}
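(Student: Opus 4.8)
The plan is to prove Theorem \ref{thm:main2} first, with Theorem \ref{thm:main1} following by checking that a dense sequence converging to a connected graphon is (after passing to a subsequence, if needed) a sequence of $\gamma$-expanders for some $\gamma>0$, and that the scaling constants $\alpha_n$ converge to $\alpha_W$. The backbone of the argument is Aldous' stick-breaking / finite-dimensional approach to CRT convergence, exploited via Wilson's algorithm: sample $k$ uniform vertices $U_1,\dots,U_k$ of $G_n$ and build $\T_n$ by running loop-erased random walks, so that the subtree of $\T_n$ spanned by $U_1,\dots,U_k$ together with a root is exactly the tree produced by successive LERWs between these points. The metric measure space $(\T_n, n^{-1/2}\sqrt{\alpha_n}\, d_{\T_n}, \mu_n)$ then converges in GHP to the CRT provided (i) the rescaled spanned subtrees converge to the stick-breaking tree with the correct parameters, and (ii) a global tightness/negligibility estimate holds, namely that $\mu_n$-almost all of $\T_n$ is close to the spanned subtree when $k$ is large — concretely that $\E{d_{\T_n}(V, \mathrm{subtree}(U_1,\dots,U_k))} = o(\sqrt{n})$ uniformly as first $n\to\infty$ then $k\to\infty$, where $V$ is an independent uniform vertex. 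Step (ii) is what upgrades Gromov--Prokhorov convergence to GHP.

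For step (i), the key input is the length of a single loop-erased random walk between two independent uniform vertices. I would define $\alpha_n := \sum_{v} \deg(v)^2 / (\sum_v \deg(v))^2 \cdot n$ — equivalently $n$ times the second moment of the stationary distribution over the mean degree squared — so that $1\le \alpha_n \le \delta^{-1}$ automatically (the upper bound because $\deg(v)\le n$ and $\sum \deg(v)\ge \delta n^2$, Cauchy--Schwarz for the lower bound). The claim to establish is that the LERW from $U_1$ to $U_2$ has length concentrated around $\sqrt{n/\alpha_n}$ times an appropriate Rayleigh-type variable, matching the first stick-breaking increment. This is where expansion enters: $\gamma$-expansion plus linear minimal degree gives a spectral gap bounded away from $0$, hence the lazy random walk mixes in $O(1)$ steps, so the SRW behaves like an i.i.d. sequence from the stationary distribution on the relevant timescale. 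One then estimates the growth of the loop-erased path using the standard second-moment method: the expected length of the LERW up to time $t$ is governed by $\sum_v \pi(v)(\text{escape probability from }v)$, and the total length is of order $\sqrt{n}$ with the constant pinned down by $\alpha_n$. The bushes hanging off the backbone are controlled the same way, giving the mass measure convergence.

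The main obstacle I anticipate is step (ii): controlling the distance from a typical vertex to the $k$-point spanned subtree uniformly in $n$, without any transitivity or vertex-symmetry to appeal to (which is precisely what breaks the reductions to \cite{PeresRevelleUSTCRT} and \cite{ANS2021ghp}). The natural route is again Wilson's algorithm: the path from an independent uniform vertex $V$ to the subtree is a LERW stopped on hitting the already-built tree, and its length is bounded (before loop erasure) by the hitting time of a set of $k$ independent uniform vertices. By expansion this hitting time is $O(n/k)$ in expectation with good tail bounds, so the contribution to the rescaled metric is $O(1/\sqrt{k})\to 0$. Making this uniform in $n$ requires quantitative expander hitting-time and mixing estimates depending only on $\gamma$ and $\delta$, and carefully tracking that loop erasure only shortens paths; I expect this to be the technical heart of the paper. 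Finally, for Theorem \ref{thm:main1} one shows that cut-distance convergence to a connected $W$ forces, for large $n$, a uniform edge-expansion bound (connectedness of $W$ rules out near-bipartitions in the cut metric) and that $\alpha_n \to \alpha_W$ since $\alpha_n$ is a bounded continuous functional of the graph's degree profile, which converges to the degree profile $x\mapsto\deg_W(x)$ of the graphon; Corollary \ref{cor:mainrandom} then follows by standard concentration of $G(n,W)$ and $H(n,W)$ around $W$ in cut-distance together with the minimal-degree hypothesis.
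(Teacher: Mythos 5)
Your general strategy --- Wilson's algorithm, a stick-breaking comparison with the CRT for the finite-dimensional marginals, and a separate tightness step to pass from Gromov--Prohorov to GHP --- follows the paper's skeleton, and your degree-profile definition of $\alpha_n$ agrees with the paper's capacity-based one up to a $1+o(1)$ factor (this equivalence is \cref{claim:alpha n equivalence}). For the finite-dimensional part the paper works more carefully than your sketch suggests: rather than a soft second-moment argument, it runs an inductive coupling between Wilson's algorithm and CRT stick-breaking, using the Laplacian random walk representation to control the capacity of each new LERW segment (\cref{cor:main stick breaking ingredient}), but that is a matter of execution rather than a different route.

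The decisive divergence, and a genuine gap, is in your step (ii). You propose to upgrade GP to GHP by showing $\E{d_{\T_n}(V, R_k)} = o(\sqrt n)$ uniformly in $n$ as $k\to\infty$, where $V$ is a uniform vertex and $R_k$ the $k$-point spanned subtree. This controls the distance from a $\mu_n$-\emph{typical} vertex to $R_k$, not the Hausdorff distance $\sup_{v\in\T_n} d_{\T_n}(v, R_k)$, and it is the latter that GHP requires. Concretely, a tree that is a good CRT approximant but has a pendant path of length $\sqrt n$ attached at a single vertex satisfies your criterion --- the path carries $\mu_n$-mass $n^{-1/2}\to 0$, so its contribution to $\E{d_{\T_n}(V,R_k)}$ is $O(1)=o(\sqrt n)$ --- yet its rescaled GHP limit is the CRT with an extra stick attached, not the CRT. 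This is the same phenomenon behind the paper's ``stick attached to $K_n$'' counterexample. The paper closes the gap by verifying the Athreya--L\"ohr--Winter lower mass bound (\cref{prop:GP plus LMB gives GHP} and \cref{clm:lmb}): for every $c>0$, the sequence $\left(\inf_{x}\mu_n(B(x,c\sqrt n))\right)^{-1}$ is tight. This is a uniform-over-$x$ volume estimate, proved by adapting the computations of \cite{ANS2021ghp} from transitive graphs to balanced ones using the degree ratio bound $D=\delta^{-1}$. To salvage your step (ii) you would need either a high-probability bound on the $\sup$ over all $n$ vertices (a union bound requiring much sharper concentration than the expectation bound you sketch) or to prove a lower mass bound of this type directly; as written, the expectation criterion does not imply GHP convergence.
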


In fact the theorem holds slightly more generally, see \cref{rmk:generalisations}, but the above assumptions make the proof more straightforward. Clearly one cannot hope for convergence of the parameter $\alpha_n$ without making stronger assumptions, since one can alternate graphs from sequences with different limiting values of $\alpha_n$. For example, for the sequence of complete graphs $\alpha_n \to 1$, but if $G_n$ is instead the complete bipartite graph $K_{\frac{n}{3}, \frac{2n}{3}}$, then $\alpha_n \to \frac{9}{8}$.

As well as the convergence of the rescaled diameter, it follows directly from the GHP convergence of \cref{thm:main2} that we also have convergence of the rescaled height and rescaled simple random walk on $\UST (G_n)$. More formally, the following three convergences hold in distribution.

\begin{enumerate}
    \item $\frac{\sqrt{\alpha_n}\diam (\T_n)}{\sqrt{n}} \overset{(d)}{\to} \diam (\T)$.
    \item $\frac{\sqrt{\alpha_n}\height (\T_n)}{\sqrt{n}} \overset{(d)}{\to} \height (\T)$.
    \item If $X_n$ is a simple random walk on $\T_n$, then the quenched law of $\left(\frac{\sqrt{\alpha_n}}{\sqrt{n}}X_n({2n^{3/2}\alpha_n^{-1/2}}t)\right)_{t \geq 0}$ converges in distribution to the quenched law of Brownian motion on the CRT. It also follows that the associated mixing times converge on the same time scale.
\end{enumerate}

See \cite[Section 1.3]{ANS2021ghp} for further details of why these three properties follow from GHP convergence. In the settings of \cref{thm:main1} and \cref{cor:mainrandom}, we can replace $\alpha_n$ with $\alpha_W$ in the above three statements.

\subsection{Proof strategy}

Clearly, in order to prove the main theorems, it suffices to first prove \cref{thm:main2} and then show that the graph sequence is an expander sequence and that $\alpha_n \to \alpha_W$ under the additional assumption of \cref{thm:main1}.

We will prove \cref{thm:main2} in two steps using the lower mass bound criterion of \cite{AthreyaLohrWinterGap}. In particular, by \cite[Theorem 6.5]{ANS2021ghp}, in order to prove the GHP convergence of \cref{thm:main2} it is enough to prove the following two statements.
\begin{enumerate}[(A)]
    \item The convergence holds in a finite-dimensional sense (this will be formally stated in \cref{thm:pr04}).
    \item The lower mass bound condition holds; that is, if $m_n(\eta) = \inf_{x \in \UST (G_n)} \left\{\frac{|B(x,\eta \sqrt{n})|}{n}\right\}$, then for every $\eta>0$ the sequence $m_n(\eta)^{-1}$ is tight (this will be formally stated in \cref{clm:lmb}).
\end{enumerate}

The second condition will follow quite straightforwardly from minor adaptations of the arguments in \cite{ANS2021ghp}. The bulk of this paper is devoted to proving the first condition. In fact, this condition is equivalent to the joint convergence, for all $k \geq 1$, of the set of $k \choose 2$ distances between $k$ points chosen uniformly at random in $\UST (G_n)$. 

This type of convergence was previously proved for USTs of sequences of high-dimensional graphs in \cite{PeresRevelleUSTCRT}. This is a different class of graphs and includes the assumption of transitivity. Their proof uses Wilson's algorithm, which is a method for sampling USTs one branch at a time by running loop erased random walks (LERWs). In their proof, they couple Wilson's algorithm on $G_n$ with Wilson's algorithm on the complete graph and prove that the set of $k \choose 2$ distances on the two graphs must have the same scaling limit.

Our proof, by contrast, is more direct. We also use Wilson's algorithm, but we work directly with $\UST (G_n)$ and use the Laplacian random walk representation of LERWs to sample each branch. By tightly controlling the capacity of loop-erased random walks, we are able to directly compute the probability that a given branch exceeds a given length, and show that this converges to the analogous quantity for the CRT using Aldous' stick-breaking construction.

\begin{remark}\label{rmk:generalisations}
As demonstrated by the examples and discussion above \cref{thm:main2}, the assumption of linear minimal degree is necessary in order to obtain convergence in the GHP topology. In order to keep the exposition clean, we prove both conditions (A) and (B) above under these assumptions. However, the assumption is not really necessary for condition (A). The proof would work unchanged if we allow $o(n)$ vertices to have degrees less than $\sqrt{n}$, for example (since the loop-erased random walk that we analyze in \cref{sctn:Wilson analysis} will never hit this set, whp). In fact, we believe that it may be possible to adapt our proof of condition (A) (\cref{thm:pr04}) to work under the original assumptions of \cite{PeresRevelleUSTCRT}, but this would require one to keep track of several additional messy details, and would not add further insight.
\end{remark}

\subsection{Organization of the paper}
This paper is organized as follows. In \cref{sctn:background} we give the necessary background, including an introduction to graphons, USTs and the topologies of interest. In \cref{sctn:stick breaking background} we introduce a general framework for stick-breaking constructions of trees, and state Aldous' stick-breaking construction of the CRT. In \cref{sctn:RW estimates} we give some precise random walk estimates and we apply these with the Laplacian random walk representation in \cref{sctn:Wilson analysis} to obtain estimates for the first steps of Wilson's algorithm. In \cref{sctn:Wilson is stickbreaking} we use these estimates to couple stick-breaking on the CRT with Wilson's algorithm and prove that the two processes are very similar when $n$ is large enough. This proves condition (A) above. We also explain how (B) can be deduced from the results of \cite{ANS2021ghp} which in fact establishes \cref{thm:main2}. Finally, in \cref{sctn:graphon conv} we prove \cref{thm:main1} and \cref{cor:mainrandom}.

\subsection{Acknowledgments}
We would like to thank Asaf Nachmias and Jan Hladky for suggesting to look at graphons and for many helpful comments. This research is supported by ERC consolidator grant 101001124 (UniversalMap), and by ISF grant 1294/19. EA was partially supported by the ANR ProGraM grant.

\section{Background}\label{sctn:background}

\subsection{Graphons}\label{sctn:graphon background}
A \textbf{graphon} is a symmetric measurable function $[0,1]^2 \to [0,1]$. As mentioned in the introduction, graphons were introduced by Borgs, Chayes, Lov{\'a}sz, S{\'o}s, Szegedy and Vesztergombi \cite{lovasz2006limits, borgs2008convergent} in order to characterize dense graph limits. To understand why this definition is natural, we define the \textbf{graphon representation} of a discrete graph $G$ as follows. Suppose that $G$ is a simple graph with $n$ vertices. Number the vertices from $v_1$ to $v_n$ and partition the interval $[0,1]$ into a sequence of intervals $(I_i)_{i=1}^n$, where $I_i = \left[\frac{i-1}{n}, \frac{i}{n}\right]$ for each $1 \leq i \leq n$. We define the graphon $W_G: [0,1]^2 \to [0,1]$ by (e.g. see \cite[Section 7.1]{lovasz2006limits})
\[
W_G((x,y)) = \mathbbm{1}\{v_{\lceil nx \rceil \vee 1} \sim v_{\lceil ny \rceil \vee 1}\} \hspace{1cm} \forall \ (x, y) \in [0,1]^2.
\]

If $G$ is a \textit{weighted} graph, we instead define
\[
W_G((x,y)) = w(v_{\lceil nx \rceil \vee 1}, v_{\lceil ny \rceil \vee 1}) \hspace{1cm} \forall \ (x, y) \in [0,1]^2,
\]
where $w(v_i, v_j)$ represents the weight of the edge joining $v_i$ and $v_j$ (and is zero if there is no such edge).

Note that, given only $G$, this definition of $W_G$ is not unique, since it depends on the ordering of the vertices. Therefore, in order to define a metric on the space of graphons, we will instead consider equivalence classes of graphons. In particular, given two graphons $W_1$ and $W_2$ the \textbf{cut-distance} between them is defined as (e.g. see \cite[Equation (8.16)]{lovasz2006limits})
\begin{equation*}
    \delta_{\square} (W_1, W_2) = \inf_{\varphi} ||W_1^{\varphi} - W_2||_{\square},
\end{equation*}
where the infimum is taken over all measure-preserving automorphisms of $[0,1]$, where $W^{\varphi}$ is defined by $W^{\varphi}(x,y)=W(\varphi(x), \varphi(y))$, and where the \textbf{cut-norm} of a measurable function $U:[0,1]^2 \to [-1,1]$ is given by 
\begin{equation*}
    ||U||_{\square} = \sup_{S,T \in \mathcal{B}([0,1])} \left| \int_{x \in S} \int_{y \in T} U(x,y) dx dy \right|.
\end{equation*}

We therefore say that a sequence of deterministic graphs $(G_n)_{n \geq 1}$ \textbf{converges to a graphon} $W$ if $\delta_{\square}(W_{G_n}, W) \to 0$ as $n \to \infty$.

\begin{remark}
Graphons can in fact be defined as functions from $\Omega^2 \to [0,1]$, where $\Omega$ is any probability space, see \cite[Chapter 13]{lovasz2006limits}, but since all probability spaces are isomorphic, this does not provide much greater generality.
\end{remark}

We will make use of the following lemma.

\begin{lemma}\label{lem:connectivity}\cite[Lemma 7]{bollobas2010percolation}.
Let $W$ be a connected graphon. Then, for every $\alpha \leq 1/2$ there exists some constant $\beta = \beta(W,\alpha)$ such that for every set $A$ with $\alpha \leq \mu(A) \leq 1/2$ we have
\begin{equation*}
    \int_{A}\int_{A^C} W(x,y) dx dy > \beta.
\end{equation*}
\end{lemma}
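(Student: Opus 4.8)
\textbf{Proof proposal for Lemma \ref{lem:connectivity}.}

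The statement upgrades the pointwise positivity in the definition of a connected graphon to a positive lower bound that is uniform over all sets whose measure lies in a fixed window $[\alpha, 1/2]$. The natural plan is a compactness-type argument: suppose for contradiction that no such $\beta$ exists, so there is a sequence of sets $(A_k)_{k \geq 1}$ with $\alpha \leq \mu(A_k) \leq 1/2$ and $\int_{A_k}\int_{A_k^C} W(x,y)\, dx\, dy \to 0$. The goal is to extract a limiting set $A$ with $\mu(A) \geq \alpha > 0$ (in particular of positive measure) and $\int_A \int_{A^C} W(x,y)\,dx\,dy = 0$, contradicting connectedness.

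The key technical step is to make sense of a ``limit'' of the sets $A_k$. The clean way to do this is to pass to the weak-$*$ limit of the indicator functions: identifying $A_k$ with $f_k := \mathbbm{1}_{A_k} \in L^\infty([0,1])$, the sequence $(f_k)$ lives in the weak-$*$ compact unit ball, so along a subsequence $f_k \to f$ weak-$*$ for some $f$ with $0 \le f \le 1$ and $\int f \ge \alpha$. One then argues that the functional $g \mapsto \int g(x)(1-g(y))W(x,y)\,dx\,dy$ behaves well in the limit: writing it as $\int\int W\,f_k - \int\int W\, f_k(x) f_k(y)$, the linear term passes to the limit directly, and for the quadratic term one uses that $W$ (viewed as an integral kernel) is a compact operator on $L^2$, so $f_k \rightharpoonup f$ weakly implies $W f_k \to W f$ strongly in $L^2$, hence $\int\int W f_k(x) f_k(y) \to \int\int W f(x) f(y)$. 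This yields $\int f(x)(1-f(y)) W(x,y)\,dx\,dy = 0$. Since $W \ge 0$ and $f, 1-f \ge 0$, this forces $f(x)(1-f(y)) = 0$ for a.e.\ $(x,y)$ with $W(x,y)>0$; a Fubini argument then shows $f$ must be (a.e.) an indicator $\mathbbm{1}_A$ for a set $A$ with $\int_A \int_{A^C} W = 0$, and $\mu(A) = \int f \ge \alpha > 0$, contradicting the definition of connectedness.

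I expect the main obstacle to be the passage from ``$f(x)(1-f(y))W(x,y) = 0$ a.e.'' to ``$f$ is an indicator of a set with no $W$-mass across its boundary.'' A priori $f$ could take intermediate values; one must argue that if $\mu(\{0 < f < 1\}) > 0$ then either $\{f > 0\}$ or its complement violates connectedness, or more carefully, that the ``non-interaction'' structure forced by $W \ge 0$ decomposes $[0,1]$ into $W$-disconnected pieces on which $f$ is constant — which again contradicts connectedness of $W$ unless $f \in \{0,1\}$ a.e. Since this is a cited lemma (\cite[Lemma 7]{bollobas2010percolation}), the expdirected move is to simply invoke it; but if one wanted a self-contained proof, handling this measure-theoretic decomposition carefully, and verifying the compactness of the operator associated to a bounded kernel $W$, would be the two places needing care. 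An alternative to the operator-compactness route is to discretize: approximate $W$ by step functions and run a purely finite-dimensional compactness argument, but the $L^2$-operator approach is cleaner.
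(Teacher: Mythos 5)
The paper does not prove this lemma; it cites it directly from \cite{bollobas2010percolation}, so there is no in-paper argument to compare against. Your weak-$*$ compactness plan is sound and is, as far as I can tell, the standard way to promote the pointwise positivity in the connectedness definition to a uniform lower bound over a measure window.

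The obstacle you flagged is real but is resolved by a short case analysis rather than by showing that $f$ is an indicator (which it need not be, and your proposed ``Fubini argument'' to that end does not quite go through as stated). From $\int f(x)(1-f(y))W(x,y)\,dx\,dy = 0$ with a nonnegative integrand you get $f(x)(1-f(y))W(x,y)=0$ for a.e.\ $(x,y)$, and testing the weak-$*$ convergence against the constant $1 \in L^1$ also gives $\alpha \le \int f \le 1/2$. Now split into three cases. If $\mu(\{f=0\})>0$, set $A=\{f>0\}$: then $\mu(A)>0$ (since $\int f\ge\alpha$), $\mu(A^C)=\mu(\{f=0\})>0$, and for a.e.\ $(x,y)\in A\times A^C$ one has $f(x)>0$ and $1-f(y)=1$, which forces $W(x,y)=0$ a.e.\ on $A\times A^C$ and hence $\int_A\int_{A^C}W=0$, contradicting connectedness. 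If instead $\mu(\{f=1\})>0$, set $A=\{f=1\}$: then $\mu(A)>0$, $\mu(A^C)>0$ (since $\int f\le 1/2<1$), and for a.e.\ $(x,y)\in A\times A^C$ one has $f(x)=1$ and $1-f(y)>0$, again forcing $W\equiv 0$ a.e.\ on $A\times A^C$. Finally, if $0<f<1$ a.e., then $f(x)(1-f(y))>0$ for a.e.\ $(x,y)$, forcing $W\equiv 0$, which is certainly not connected. In none of these cases do you need $f\in\{0,1\}$ a.e.; the structural claim about $f$ being an indicator is a detour you can skip.

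Two small technical points you should make explicit. The compactness you invoke is automatic: $W\in L^\infty([0,1]^2)\subset L^2([0,1]^2)$, so the associated integral operator is Hilbert--Schmidt and hence compact; and since $L^2([0,1])\subset L^1([0,1])$, weak-$*$ convergence $f_k\to f$ in $L^\infty$ already gives weak convergence in $L^2$, so $Wf_k\to Wf$ strongly and $\langle Wf_k,f_k\rangle\to\langle Wf,f\rangle$. Also, the definition of connectedness as quoted in the paper should be read as restricted to sets $A$ with $0<\mu(A)<1$ (as in the cited reference), which is why the case analysis above is careful to produce a contradicting set whose complement also has positive measure.
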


\subsubsection{Random graphs and graphons}
A graphon $W$ can be used to define a random graph with $n$ vertices in two ways.
\begin{enumerate}
    \item Sample $x_1, \ldots, x_n$ i.i.d. uniformly on $[0,1]$. We define a \textbf{random simple graph} on $\{1, \ldots, n\}$ by joining nodes $i$ and $j$ with probability $W(x_i, x_j)$, independently for each (unordered) pair $(i,j)$. We denote the resulting random graph $G(n, W)$.
    \item Sample $x_1, \ldots, x_n$ i.i.d. uniformly on $[0,1]$. We define a \textbf{random weighted graph} on $\{1, \ldots, n\}$ by adding an edge between $i$ and $j$ of weight $W(x_i, x_j)$ for each (unordered) pair $(i,j)$. We denote the resulting random graph $H(n, W)$.
\end{enumerate}
In both constructions, note that we can use a \textit{single} graphon to define a whole sequence of random graphs. The following lemma tells that in either case, the cut-distance between a random sample of $G(k,W)$ or $H(k,W)$ and $W$ goes to zero w.h.p. as $k \to \infty$.

\begin{lemma}\cite[Lemma 10.16]{lovasz2006limits}.\label{lem:random graphons are close}
Fix a graphon $W$ and for $k \geq 1$, let $G(k,W)$ and $H(k,W)$ be defined as above. Then, $\delta_{\square}(W_{G(k,W)}, W)$ and $\delta_{\square}(W_{H(k,W)}, W)$ both tend to $0$ in probability as $k \to \infty$.
\end{lemma}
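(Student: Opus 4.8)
The plan is to deduce this from convergence of homomorphism densities together with a bounded-differences concentration estimate. The key fact I would import is the standard equivalence: a sequence of graphons $U_k$ converges to $U$ in $\delta_\square$ if and only if $t(F, U_k) \to t(F, U)$ for every finite simple graph $F$, where $t(F, \cdot)$ denotes the homomorphism density of $F$; this is the combination of the counting and inverse counting lemmas and is proved in \cite{lovasz2006limits, borgs2008convergent}, and I would take it as a black box. Since there are only countably many finite simple graphs, and since convergence in probability of $\delta_\square(W_{G(k,W)}, W)$ to $0$ is equivalent to the statement that every subsequence admits a further subsequence along which it converges to $0$ almost surely, it is enough to prove that for each fixed finite simple graph $F$ one has $t(F, G(k,W)) \to t(F,W)$ in probability; one then extracts, by a diagonal argument, a common subsequence along which this holds almost surely and simultaneously for all $F$. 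Here I use that $t(F, W_{G(k,W)}) = t(F, G(k,W))$.

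So fix $F$ on $v = v(F)$ vertices. For the first moment: conditioning on the i.i.d.\ uniform points $x_1, \dots, x_k$, a uniformly chosen injective map $\phi \colon V(F) \to [k]$ embeds $F$ into $G(k,W)$ with conditional probability $\prod_{uv \in E(F)} W(x_{\phi(u)}, x_{\phi(v)})$; averaging over $\phi$ and then over the $x_i$ gives $\E{t_{\mathrm{inj}}(F, G(k,W))} = \int_{[0,1]^v} \prod_{uv \in E(F)} W(y_u, y_v)\,dy = t(F,W)$, and since $|t(F,\cdot) - t_{\mathrm{inj}}(F,\cdot)| = O_v(1/k)$ on any $k$-vertex graph we get $\E{t(F, G(k,W))} = t(F,W) + O_v(1/k)$. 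For concentration: view $t(F, G(k,W))$ as a function of the independent inputs $x_1, \dots, x_k \in [0,1]$ and the $\binom{k}{2}$ edge indicators. Resampling a single $x_i$ alters at most a $v/k$-fraction of the vertex-tuples over which $t$ averages, each by at most $1$, so its bounded-difference constant is at most $v/k$; resampling a single edge indicator alters an $O_v(1/k^2)$-fraction of those tuples, so its constant is $O_v(1/k^2)$. McDiarmid's inequality then gives
\[
\pr{\big| t(F, G(k,W)) - \E{t(F, G(k,W))} \big| > \epsilon} \;\le\; 2\exp\!\left( \frac{-2\epsilon^2}{\, v^2/k + O_v(1/k^2)\,} \right) \xrightarrow[k \to \infty]{} 0,
\]
which combined with the first-moment estimate yields $t(F, G(k,W)) \to t(F,W)$ in probability, as required.

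For $H(k,W)$ the argument is the same but easier: there are no edge indicators, so $t(F, H(k,W))$ is a deterministic function of $x_1, \dots, x_k$ alone (with edge weights $W(x_i,x_j)$), the first-moment computation is unchanged, and McDiarmid applies with the single family of bounded-difference constants $v/k$. I expect the only genuinely nontrivial ingredient in all of this to be the equivalence between $\delta_\square$-convergence and convergence of all homomorphism densities, a standard but substantial fact that I would cite rather than reprove; granted that, the remainder is a routine first-moment-plus-concentration computation, the one point of care being to keep the two scales of influence --- vertex labels versus edge indicators --- separate in the McDiarmid bound, where as the display shows the edge-indicator contribution is negligible.
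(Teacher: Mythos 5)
The paper does not supply its own proof here; it simply cites \cite[Lemma 10.16]{lovasz2006limits}. Your proof is correct and in fact reconstructs the same standard argument underlying the cited result: reduce $\delta_\square$-convergence to convergence of all homomorphism densities via the counting and inverse counting lemmas, identify $t(F, W_{G(k,W)}) = t(F, G(k,W))$, compute the first moment via the injective density and i.i.d.\ sampling (giving $\E{t(F, G(k,W))} = t(F,W) + O_v(1/k)$), and then concentrate via McDiarmid by treating the $k$ vertex labels and the $\binom{k}{2}$ edge-indicator variables as independent inputs with bounded-difference constants of order $v/k$ and $O_v(1/k^2)$ respectively, so $\sum c_i^2 = O_v(1/k)$. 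The subsequence-plus-diagonalisation argument for passing from per-$F$ convergence in probability to convergence in probability of $\delta_\square$ is also correctly executed. One very small presentational point: when you introduce the $\binom{k}{2}$ edge indicators for McDiarmid, it is worth saying explicitly that you realise $G(k,W)$ from $x_1,\ldots,x_k$ and i.i.d.\ $U_{ij}\sim\mathrm{Unif}[0,1]$ via $\{ij \in E\} = \{U_{ij}\le W(x_i,x_j)\}$, since that is what makes the McDiarmid coordinate system genuinely independent; as written the independence is implicit. Otherwise the argument is clean and complete modulo the cited equivalence, which it is entirely reasonable to black-box.
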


In particular this means that results we prove for USTs of deterministic sequences of graphs extend automatically to sequences of the form $G(k,W)_{k \geq 1}$ or $H(k,W)_{k \geq 1}$ under the assumptions of \cref{cor:mainrandom}.

For example, the classical Erd\"os-R\'enyi graphs $\mathcal{G}(n,p)$ for $n \geq 1$, $p \in [0,1]$ correspond to the graphs $G(n, W_p)$ where $W_p$ is the graphon that is equal to $p$ everywhere.

For further background and applications of graphons, we refer to \cite[Part 3]{lovasz2006limits}.

\subsection{Mixing times} 
Let $G$ be a connected weighted graph with $n$ vertices, with weights $(w(x,y))_{x, y \in V(G)}$, and with no loops or multiple edges. A \textbf{random walk} on $G$ is the Markov Chain $(X_m)_{m \geq 0}$ such that, for all vertices $x, y \in V(G)$, and all $m \geq 1$,
\[
\prcond{X_m = y}{X_{m-1}=x}{} = \frac{w(x,y)}{\sum_{z \sim x} w(x,z)},
\]
where $z \sim x$ means that $z$ is a neighbour of $x$. Due to periodicity considerations, it is sometimes more convenient to instead use the notion of a \textbf{lazy random walk}. This is defined by
\[
\prcond{X_m = y}{X_{m-1}=x}{} = \frac{w(x,y)}{2\sum_{z \sim x} w(x,z)} \ \forall y \sim x \ \ \text{and} \ \ \prcond{X_m = x}{X_{m-1}=x}{} = \frac{1}{2}
\]
for all $m \geq 1$.

For each $t \geq 0$ let $p_t$ denote the $t$-step transition density of a lazy random walk, i.e. $p_t (x,y) = \prcond{X_t = y}{X_{0}=x}{}$ for all $x, y \in V(G)$. We define the {\bf mixing time} of $G$ as
	\begin{equation} \label{def:tmix}
	\tmix(G) = \min\left\{t\geq 0: \max_{x,y\in G} \left|  p_t(x,y)  - \pi(x)\right| \leq \frac{1}{4}\right\}, 
	\end{equation}
 (see \cite[Equation (4.31)]{levin2017markovmixing}),
where $\pi$ denotes the stationary measure on $G$. 

We will also need the notion of {\bf total variation distance} between two probability measures on $\mu$ and $\nu$ on a finite subset $X \subset V(G)$. This is defined by
	$$ d_{\textrm{TV}}(\mu,\nu) = \max_{A \subset X} |\mu(A)-\nu(A)| \, .$$
Furthermore, by \cite[Section 4.5]{levin2017markovmixing}, we have for any $k \geq 1$, any $t \geq k\tmix$ and any vertex $x$ that
	\begin{equation}\label{eq:tvDecay}
	d_{\textrm{TV}}(p_t(x,\cdot),\pi(\cdot)) \leq 2^{-k}.
	\end{equation}

\subsection{Expanders}

We will use the following definition of an expander graph.

\begin{definition}(\cite[Definition 2.1]{hladky2018local}).\label{def:expander}
For any $\gamma > 0$, a loopless weighted graph $G$ is a $\gamma$-expander if for all $U \subset V(G)$, we have that $w(U, V(G) \setminus U) \geq \gamma|U|(V(G) - |U|)$ where $w(A,B) = \sum_{v\in A, u\in B} w(v,u)$.
\end{definition}

Although we give the definition for loopless graphs, note that adding loops to a graph does not change the law of its UST, since loops can never appear in a UST. Note that often in the literature a slightly different definition of expander is used, involving the Cheeger constant. We are using the definition above as it fits more naturally into the framework of dense graphs (as we will later show in \cref{claim:graphon is an expander}) and is the same definition used to consider the local limit in \cite{hladky2018local}.

The main property of expanders that we will use is as follows.

\begin{claim}\label{claim:mixing time expander}
Let $\gamma>0$ and let $G$ be a $\gamma$-expander with $n\geq 2$ vertices. Then, provided that $n$ is large enough (depending on only $\gamma$), we have that
\[
\tmix (G) \leq \frac{64}{\gamma^4}\log n.
\]
\end{claim}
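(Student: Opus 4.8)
The plan is to bound the mixing time of the lazy random walk via a spectral gap estimate, which in turn follows from a Cheeger-type inequality once we translate the combinatorial expansion hypothesis into a bound on the conductance (bottleneck ratio) of the chain. First I would recall that for a lazy reversible chain with stationary distribution $\pi$ and spectral gap $\lambda_\star = 1 - \lambda_2$ (second largest eigenvalue), one has the standard bound $\tmix(G) \leq \lambda_\star^{-1}\log(4/\pi_{\min})$, where $\pi_{\min} = \min_x \pi(x)$ (see e.g. \cite[Theorem 12.4]{levin2017markovmixing}). The weighted Cheeger inequality then gives $\lambda_\star \geq \Phi_\star^2 / 2$, where the bottleneck ratio is
\begin{equation*}
\Phi_\star = \min_{U : \pi(U) \leq 1/2} \frac{\sum_{u \in U, v \notin U} \pi(u) P(u,v)}{\pi(U)}.
\end{equation*}
So it suffices to lower-bound $\Phi_\star$ and $\pi_{\min}$ in terms of $\gamma$ and $n$.

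The key computation is the following. For the lazy walk, $\pi(u) = \deg(u) / (2\mathrm{vol}(G))$ and $P(u,v) = w(u,v)/(2\deg(u))$, so for any $U$ with $\pi(U) \leq 1/2$,
\begin{equation*}
\sum_{u \in U, v \notin U} \pi(u) P(u,v) = \frac{w(U, V \setminus U)}{4\,\mathrm{vol}(G)}.
\end{equation*}
Since each edge-weight lies in $[0,1]$ and loops are irrelevant, $\mathrm{vol}(G) \leq n(n-1) \leq n^2$. The expander hypothesis gives $w(U, V \setminus U) \geq \gamma |U|(n - |U|)$. Writing $k = |U|$, and using $\pi(U) \leq 1/2$ together with $\deg(v) \leq n$ for all $v$, we get $\pi(U) \leq kn/(2\mathrm{vol}(G))$, hence the bottleneck ratio for this $U$ is at least
\begin{equation*}
\frac{\gamma k (n-k) / (4 n^2)}{kn/(2\,\mathrm{vol}(G))} = \frac{\gamma (n-k)\,\mathrm{vol}(G)}{2 n^3}.
\end{equation*}
I still need a matching lower bound on $\mathrm{vol}(G)$ and to control the regime $k$ close to $n$; here is where I use expansion more carefully. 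Applying the expander bound with $U$ a single vertex $v$ gives $\deg(v) = w(\{v\}, V \setminus \{v\}) \geq \gamma(n-1)$, so $\mathrm{vol}(G) \geq \gamma n(n-1) \geq \gamma n^2 / 2$ for $n \geq 2$; in particular $\pi_{\min} \geq \gamma(n-1)/(2n^2) \geq \gamma/(4n)$. Substituting $\mathrm{vol}(G) \geq \gamma n^2/2$ into the display above yields $\Phi_\star$-contribution at least $\gamma^2(n-k)/(4n)$ from sets of size $k$. Since $\pi(U) \leq 1/2$ and $\pi$ is roughly uniform (all degrees are between $\gamma(n-1)$ and $n$, so $\pi(U) \geq \gamma k (n-1)/(2n^2) \geq \gamma k/(4n)$), $\pi(U) \leq 1/2$ forces $k \leq 2n/\gamma$; but this does not bound $k$ away from $n$ when $\gamma$ is small. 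Instead I would note directly: if $\pi(U) \le 1/2$ then $\pi(V \setminus U) \ge 1/2$, so $\mathrm{vol}(V \setminus U) \ge \mathrm{vol}(G)/2 \ge \gamma n^2/4$, and since each degree is at most $n$, $|V \setminus U| = n - k \ge \gamma n / 4$. Plugging $n - k \ge \gamma n/4$ into $\gamma^2(n-k)/(4n)$ gives $\Phi_\star \geq \gamma^3/16$.

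Combining everything: $\lambda_\star \geq \Phi_\star^2/2 \geq \gamma^6/512$, and $\log(4/\pi_{\min}) \leq \log(16 n/\gamma) \leq 2\log n$ once $n$ is large enough depending on $\gamma$. Therefore $\tmix(G) \leq (512/\gamma^6) \cdot 2\log n$, which is certainly at most $(64/\gamma^4)\log n$ once... wait — the exponent of $\gamma$ in my bound is worse than claimed, so in the write-up I would either state the weaker $\gamma^{-6}$ constant (harmless, as only the existence of a polynomial-in-$1/\gamma$, logarithmic-in-$n$ bound is used downstream) or tighten the volume/degree estimates to recover $\gamma^{-4}$; the cleaner route to $\gamma^{-4}$ is to avoid the crude $\deg(v) \le n$ bound on $\pi(U)$ and instead only use $\pi(U) \le 1/2$ directly in the Cheeger denominator, giving $\Phi_\star \geq \gamma \cdot \mathrm{vol}(G)/(2n^3) \cdot \min_U (n-|U|)/\pi(U) \cdot \pi(U)$ — more carefully, $\Phi_\star = \min_U w(U,V\setminus U)/(2\,\mathrm{vol}(G)\pi(U)) \geq \min_U \gamma|U|(n-|U|)/(4\,\mathrm{vol}(G)\pi(U))$ and since $\mathrm{vol}(G)\pi(U) = \mathrm{vol}(U)/2 \le |U| n /2$, this is $\ge \gamma(n-|U|)/(2n) \ge \gamma^2/8$ using $n - |U| \ge \gamma n/4$ as above, so $\lambda_\star \ge \gamma^4/128$ and $\tmix \le (128/\gamma^4)\log(16n/\gamma)$, absorbed into $(64/\gamma^4)\log n$ for $n$ large.

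The main obstacle is the last point: controlling the bottleneck ratio for sets $U$ whose complement is small, i.e. ensuring $n - |U|$ cannot be too small relative to $n$ given only $\pi(U) \le 1/2$. The resolution is the elementary observation above that $\pi(U) \le 1/2$ forces $\mathrm{vol}(V\setminus U) \ge \mathrm{vol}(G)/2$, and since every vertex has degree at most $n-1$ while the minimum degree is $\ge \gamma(n-1)$ (itself a consequence of expansion applied to singletons), this forces $|V \setminus U| \ge \gamma n/4$. Everything else — the lazy-chain Cheeger inequality, the eigenvalue-to-mixing-time bound, the volume bounds $\gamma n^2/2 \le \mathrm{vol}(G) \le n^2$ — is standard and quotable from \cite{levin2017markovmixing}.
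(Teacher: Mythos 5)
Your proposal follows essentially the same route as the paper: bound $\tmix$ via the relaxation time and $\log(1/\pi_{\min})$ using \cite[Theorem 12.4]{levin2017markovmixing}, lower-bound the spectral gap by Cheeger, lower-bound the conductance using the expander hypothesis applied both to the set $U$ and to singletons (to get $\deg(v) \geq \gamma(n-1)$, hence $\pi_{\min} \gtrsim \gamma/n$), and crucially observe that $\pi(U) \leq 1/2$ forces $n - |U| \geq \gamma n/4$. The only discrepancies are bookkeeping slips (e.g.\ the lazy-walk stationary measure is $\deg(u)/\mathrm{vol}(G)$, not $\deg(u)/(2\mathrm{vol}(G))$, and the final $128/\gamma^4\cdot\log(16n/\gamma)$ is not literally $\leq 64/\gamma^4\log n$), but these affect only the explicit constant, which is immaterial to every downstream use of the claim.
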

\begin{proof}
Note that it follows from \cref{def:expander} that $G$ has minimal degree at least $\frac{\gamma}{2}n$. First note that by \cite[Theorem 12.4]{levin2017markovmixing} that
\[
\tmix (G) \leq t_{\text{rel}} \log \left( \frac{8n}{\gamma}\right),
\]
where $t_{\text{rel}}$ is the relaxation time of $G$. By the Cheeger inequality (see \cite{alon1986eigenvalues, alon1985lambda1, jerrum2004elementary, lawler1988bounds} for various proofs), $\frac{1}{t_{\text{rel}}}$ is lower bounded by $\Phi(G)^2/2$, where
\[
\Phi(G) =  \min_{S \subset V(G), \pi(S) \leq 1/2} \frac{w(S, V(G)\setminus S)}{\sum_{v \in S} \deg v}.
\]
Note that $\pi(S) \leq 1/2$ implies that $(|V(G)|- |S|) \geq \frac{n\gamma}{4}$. Since $G$ is a $\gamma$-expander, it follows that 
\[
\Phi(G) \geq \frac{\gamma |S|(|V(G)|- |S|)}{\sum_{v \in S} \deg v} \geq \frac{\gamma |S|(|V(G)|- |S|)}{|S|n} \geq \frac{\gamma^2}{4}.
\]
Combining all the inequalities gives the result.
\end{proof}

\subsection{Loop-erased random walk and Wilson's algorithm}\label{sctn:Wilson}
We now describe Wilson's algorithm \cite{Wil96} which is a widely-used algorithm for sampling $\UST$s.
A {\bf walk} $X=(X_0, \ldots X_L)$ of length $L\in\mathbb{N}$ is a sequence of vertices where $(X_i, X_{i+1})\in E(G)$ for every $0 \leq i \leq L-1$. For an interval $J=[a,b]\subset[0,L]$ where $a,b$ are integers, we write $X[J]$ for $\{X_i\}_{i=a}^{b}$. Given a walk $X$, we define its {\bf loop erasure} $Y = \LE(X) = \LE(X[0,L])$ inductively as follows. We set $Y_0 = X_0$ and let $\lambda_0 = 0$. Then, for every $i\geq 1$, we set $\lambda_i = 1+\max\{t \mid X_t = Y_{\lambda_{i-1}}\}$ and if $\lambda_i \leq L$ we set $Y_i = X_{\lambda_i}$. We halt this process once we have $\lambda_i > L$. When $X$ is a random walk on the weighted graph $G$ starting at some vertex $v$ and terminated when hitting another vertex $u$ ($L$ is now random), we say that $\LE(X)$ is a {\bf loop erased random walk} ($\LERW$) from $v$ to $u$.  
	
To sample a $\UST$ of a finite connected weighted graph $G$ we begin by fixing an ordering of the vertices of $V=(v_1,\ldots, v_n)$. First let $T_1$ be the tree containing $v_1$ and no edges. Then, for each $i>1$, sample a $\LERW$ from $v_i$ to $T_{i-1}$ and set $T_i$ to be the union of $T_{i-1}$ and the $\LERW$ that has just been sampled. We terminate this algorithm with $T_n$. Wilson \cite{Wil96} proved that $T_n$ is distributed as $\UST(G)$. An immediate consequence is that the path between any two vertices in $\UST (G)$ is distributed as a $\LERW$ between those two vertices. This was first shown by Pemantle \cite{Pem91}.

\subsection{Laplacian random walk}\label{sctn:Laplacian RW}
Here we outline the Laplacian random walk representation of the LERW (see \cite[Section 4.1]{LyonsPeres} for full details) and its application to Wilson's algorithm. Take a finite, weighted, connected graph $G$ and suppose we have sampled $T_j$ for some $j \geq 1$ using Wilson's algorithm as described above. We now sample a LERW from $v_{j+1}$ to $T_j$. Denote this LERW by $(Y_m)_{m \geq 0}$. Also let $X$ denote a random walk on $G$. For a set $A \subset G$, let $\tau_A$ denote the hitting time of $A$ by $X$, and $\tau_A^+$ denote the first return time to $A$ by $X$. The Laplacian random walk representation of $Y$ says that, conditionally on $T_j$ and on the event $\{(Y_m)_{m=0}^{i} \cap T_j = \emptyset\}$, we have for any $i \geq 0$ that 
\begin{align*}
\prcond{ Y_{i+1} = v}{(Y_m)_{m=0}^{i}}{} = \prcond{ X_{1} = v}{\tau_{T_j} < \tau^+_{\cup_{m=0}^{i} \{Y_m\}}}{Y_i} = \frac{\prstart{ X_{1} = v}{Y_i} \prstart{\tau_{T_j} < \tau_{\cup_{m=0}^{i} \{Y_m\}}}{v}}{\prstart{\tau_{T_j} < \tau^+_{\cup_{m=0}^{i} \{Y_m\}}}{Y_i}}. 
\end{align*}
Clearly this is only non-zero when $v \notin \bigcup_{m=0}^{i} \{Y_m\}$. We can now extrapolate this to ask about the law of $(Y_m)_{m=i+1}^{i+H}$ for some $H \geq 1$, given $(Y_m)_{m=0}^{i}$. In particular, if $u_0, u_1, \ldots, u_H$ is a simple path in $G_n$, where $\{u_1, \ldots, u_{H-1}\}$ is disjoint from $\bigcup_{m=0}^{i} \{Y_m\} \cup T_j$ and $u_0 = Y_i$, then
\begin{align*}\label{eqn:LERW path factorisation intro}
\begin{split}
\prcond{ (Y_m)_{m=i+1}^{i+H} = (u_m)_{m=1}^{H}}{(Y_m)_{m=0}^{i}}{}
&= \prstart{ (X_m)_{m=1}^{H} = (u_m)_{m=1}^{H}}{u_0} C((Y_m)_{m=0}^{i}, T_j, (u_m)_{m=1}^{H})),
\end{split}
\end{align*}
where 
\begin{equation*}\label{eqn:C constant expression intro}
C((Y_m)_{m=0}^{i}, T_j, (u_m)_{m=1}^{H}) = \prod_{h=1}^{H} \frac{\prstart{\tau_{T_j} < \tau_{\cup_{m=0}^{i} \{Y_m\} \cup \ \cup_{m=1}^{h-1} \{u_m\}}}{u_h}}{\prstart{\tau_{T_j} < \tau^+_{\cup_{m=0}^{i} \{Y_m\} \cup \ \cup_{m=1}^{h-1} \{u_m\}}}{u_{h-1}}}.
\end{equation*}

\subsection{Capacity and closeness}\label{sctn:capacity defs}
	
Recall that $G$ is a connected weighted graph with $n$ vertices with minimal degree at least $\delta n$. The capacity of a set of vertices of $G$ quantifies how difficult it is for a random walk to hit the set. Let $(X_i)_{i\geq 0}$ be a random walk on $G$ and for $U \subset V(G)$, let $\tau_U = \inf \{i \geq 0: X_i \in U\}$. Given $k \geq 0$ we define the $\mathbf{k}$\textbf{-capacity} of $U$ by $\Capk(U) = \prstart{\tau_U \leq k}{\pi}$.

Here we collect some useful facts about the capacity.

\begin{lemma}\label{lem:cap union bound}
	Let $A \subset V(G)$ and $k \geq 1$. Then
	\begin{equation}\label{eqn:cap UB}
	\Capk (A) \leq k\pi(A) \leq \frac{k|A|}{\delta n}.
	\end{equation}
Moreover, if $k|A| \leq \frac{\delta^3 n}{2}$, then
\begin{equation}
    \Capk (A) \geq \frac{k\pi(A)}{2} \geq \frac{\delta k|A|}{2n} .
\end{equation}
	\end{lemma}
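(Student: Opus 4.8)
The plan is to prove both the upper and lower bounds on $\Capk(U)$ via a first/second moment argument on the number of visits to $U$ before time $k$ by a stationary random walk. For the upper bound, I would simply use a union bound over times: writing $\tau_U = \inf\{i \geq 0 : X_i \in U\}$ with $X_0 \sim \pi$, we have $\Capk(U) = \prstart{\tau_U \leq k}{\pi} \leq \sum_{i=0}^{k} \prstart{X_i \in U}{\pi} = (k+1)\pi(U)$, and since the walk is stationary each term is exactly $\pi(U)$. (A slightly more careful count gives the stated $k\pi(U)$; in any case the $k+1$ versus $k$ discrepancy is harmless, or one sums over $i=1,\dots,k$.) Then $\pi(U) = \frac{\sum_{v\in U}\deg v}{\sum_{v}\deg v} \leq \frac{|U| \cdot n}{n \cdot \delta n} = \frac{|U|}{\delta n}$ — wait, more precisely $\pi(U) \le \frac{|U|}{\Delta_n}\cdot\frac{\sum_v \deg v}{\sum_v \deg v}$; the clean bound is $\pi(U) \le |U|/(\delta n)$ using $\deg v \le n$ for all $v$ (since weights are in $[0,1]$) and $\sum_v \deg v \ge \delta n \cdot n$ — actually $\sum_v \deg v \ge n \Delta_n \ge \delta n^2$, and $\sum_{v \in U}\deg v \le |U| n$, giving $\pi(U) \le |U|/(\delta n)$. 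This yields \eqref{eqn:cap UB}.

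For the lower bound, let $N = \sum_{i=1}^{k} \mathbbm{1}\{X_i \in U\}$ be the number of visits to $U$ in the first $k$ steps of the stationary walk. Then $\E{N} = k\pi(U)$, and $\Capk(U) \geq \prstart{N \geq 1}{\pi} \geq \frac{(\E{N})^2}{\E{N^2}}$ by the Paley--Zygmund / Cauchy--Schwarz inequality. So it suffices to show $\E{N^2} \leq 2(\E{N})^2$ under the hypothesis $k|U| \leq \frac{\delta^3 n}{2}$. Expanding, $\E{N^2} = \sum_{i,j} \prstart{X_i \in U, X_j \in U}{\pi} = \E{N} + 2\sum_{1\le i<j\le k}\prstart{X_i\in U, X_j\in U}{\pi}$. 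For each pair with $i < j$, condition on $X_i$ (which is $\pi$-distributed) and bound $\prstart{X_j \in U \mid X_i}{} \le \Capk(U) \cdot (\text{something})$ — more directly, $\prstart{X_j \in U \mid X_i = x}{} = p_{j-i}(x, U) \le \max_{y} p_{j-i}(y,U)$, and one can bound this using mixing: once $j-i \ge \tmix$ it's within $1/4 + \pi(U)$, actually we need a bound of order $\pi(U)$, which requires more care since $j-i$ can be small. The cleanest route: $\prstart{X_i \in U, X_j \in U}{\pi} \le \prstart{X_i \in U}{\pi} \cdot \Capk(U) \le \pi(U) \cdot \frac{k|U|}{\delta n}$ — no, this isn't quite the Markov structure. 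Instead, use $\prstart{X_i \in U, X_j \in U}{\pi} = \sum_{x \in U}\pi(x) p_{j-i}(x,U)$ and bound $p_{j-i}(x, U) \le \prstart{\tau_U \le k}{x}$ crudely... this is circular. The right tool is the standard identity $\E{N^2} \le \E{N}\cdot(1 + \max_{x\in U}\estart{N}{x})$ and then bound $\estart{N}{x} = \sum_{m=1}^k p_m(x,U)$; splitting at $\tmix$, the contribution of $m \ge \tmix$ is at most $k(\pi(U) + 1/4 \cdot \text{TV decay})$, hmm — we genuinely need $\estart{N}{x} \le 1$, i.e. $k\pi(U) + (\text{short-time term}) \le 1$, which is where $k|U| \le \delta^3 n/2$ enters (it forces $k\pi(U) \le k|U|/(\delta n) \le \delta^2/2 \le 1/2$ and the short-range return contribution, bounded via the minimal-degree condition controlling return probabilities, is also $\le 1/2$).

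The main obstacle I anticipate is controlling $\max_{x \in U}\estart{N}{x} = \sum_{m=1}^{k} \max_{x\in U} p_m(x, U)$: for large $m$ this is $\approx \pi(U)$ by mixing, but for small $m$ the walk may linger near $x$, so one needs the minimal-degree hypothesis $\deg v \ge \delta n$ to argue that even a short random walk spreads out enough that $p_m(x,U) \le C|U|/(\delta n)$ uniformly (e.g. $p_1(x,U) \le |U|/\Delta_n \le |U|/(\delta n)$, and similar bounds propagate, or one simply notes $p_m(x,U) \le \max_y p_1(y, \cdot)\text{-type spread}$). Once $\max_{x\in U}\estart{N}{x} \le 1$ is established — which is exactly what the condition $k|U| \le \frac{\delta^3 n}{2}$ is calibrated to give, the extra factors of $\delta$ absorbing the short-time return corrections — the inequality $\E{N^2} \le 2\E{N}$... rather $\E{N^2} \le \E{N}(1 + \max_x \estart{N}{x}) \le 2\E{N} \le 2(\E{N})^2 / (\E{N})$; combined with Cauchy--Schwarz this gives $\Capk(U) \ge \frac{(\E N)^2}{\E{N^2}} \ge \frac{(\E N)^2}{2\E N} = \frac{\E N}{2} = \frac{k\pi(U)}{2}$, and then $\pi(U) \ge \frac{|U| \cdot \delta n \cdot ?}{...}$ — here $\pi(U) = \frac{\sum_{v\in U}\deg v}{\sum_v \deg v} \ge \frac{|U| \delta n}{n \cdot n} = \frac{\delta|U|}{n}$, giving $\Capk(U) \ge \frac{\delta k |U|}{2n}$ as claimed.
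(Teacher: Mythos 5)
Your proof is correct and identifies the right ingredients, but it is more roundabout than the paper's argument. The paper obtains the lower bound directly from the Bonferroni inequality (second-order inclusion--exclusion): with $E_i = \{X_i \in A\}$,
\[
\Capk(A) \geq \sum_{i} \prstart{E_i}{\pi} - \sum_{i<j}\prstart{E_i \cap E_j}{\pi} = k\pi(A) - \sum_{i<j}\prstart{E_i\cap E_j}{\pi},
\]
and then bounds the cross terms using the observation you correctly flag as the crux: since $\deg(z)\geq\delta n$ for every vertex $z$, every one-step probability satisfies $p(z,y)\leq 1/(\delta n)$, hence by conditioning on the penultimate step $p_m(x,y)\leq 1/(\delta n)$ for \emph{all} $m\geq 1$, so $p_m(x,A)\leq |A|/(\delta n)$ uniformly. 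This gives $\sum_{i<j}\prstart{E_i\cap E_j}{\pi}\leq (k|A|/\delta n)^2$, and $k|A|\leq\tfrac{\delta^3 n}{2}$ is calibrated precisely so that this is at most $k\pi(A)/2$. Your Paley--Zygmund route reaches the same end but adds an extra layer (bounding $\E{N^2}$ via $\max_x\estart{N}{x}$, then dividing), and your writeup of that layer is needlessly hesitant: the mixing-time split you agonize over is not required at all, because the pointwise bound $p_m(x,A)\leq |A|/(\delta n)$ already holds uniformly in $m\geq 1$; and the identity you invoke should read $\E{N^2}\leq\E{N}\bigl(1+2\max_x\estart{N}{x}\bigr)$ (you dropped the factor $2$), though this is harmless here since $\max_x\estart{N}{x}\leq\delta^2/2\leq 1/2$. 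Both arguments are second-moment methods resting on the same degree bound; Bonferroni is shorter because it yields the additive estimate $\Capk(A)\geq k\pi(A) - (\cdot)$ directly, without having to convert a multiplicative Paley--Zygmund ratio back into the additive form the lemma asks for.
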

	\begin{proof}
	The upper bound follows from a union bound. The lower bound follows from the Bonferroni inequalities and the lower bound on the degree, which imply that
	\begin{equation*}
    \Capk (A) \geq k\pi(A) - \left(\frac{k|A|}{\delta n}\right)^2 \geq \frac{\delta k|A|}{2n}. \qedhere
\end{equation*}
	\end{proof}
	
	We will also use the following claim.	
	\begin{claim}\label{claim:capacity different start point}
	Let $\tmix = \tmix(G)$. Let $A \subset V(G)$, let $M \geq (\log n)^2 \tmix$ and suppose that $(\log n)^2 \cdot \tmix|A| \leq n$. Then, provided $n$ is large enough,
	\[
	\sup_{u \in V(G) \setminus A} \left|\prstart{\tau_A \leq M}{u} - \Capp_M(A) \right| \leq \frac{3\log n \cdot \tmix |A|}{\delta n}.
	\]
	\end{claim}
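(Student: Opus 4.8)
The plan is to run the walk from $u$ for a short burn-in time $t_0 = \lceil \log n \rceil\,\tmix$ and then, via the mixing estimate \eqref{eq:tvDecay}, replace the law of the walk at time $t_0$ by the stationary measure $\pi$, exploiting that a walk started from $\pi$ is stationary, so that $\prstart{X_{t_0}=w}{\pi} = \pi(w)$ exactly. The one structural input is a crude pointwise bound on the transition density coming from the minimal degree hypothesis: since $G$ has no loops and $\deg v \ge \delta n$ for every $v$, one has $p_1(z,v) \le \tfrac{1}{\delta n}$ for all $z,v$, and hence, by induction, $p_i(u,v) = \sum_z p_{i-1}(u,z)p_1(z,v) \le \tfrac{1}{\delta n}$ for every $i \ge 1$ (for the lazy walk the same bound for $v\ne u$ follows from the recursion $p_i(u,v) \le \tfrac12 p_{i-1}(u,v) + \tfrac{1}{2\delta n}$). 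A union bound over the first $s$ steps then gives, for every $u \notin A$ and $s \ge 1$,
\begin{equation*}
\prstart{\tau_A \le s}{u} \;\le\; \sum_{i=1}^{s}\sum_{v\in A} p_i(u,v) \;\le\; \frac{s\,|A|}{\delta n},
\end{equation*}
and similarly $\prstart{\tau_A \le s}{\pi} \le s\,\pi(A) \le \frac{s|A|}{\delta n}$.

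Fix $u \in V(G)\setminus A$, and note $t_0 < M$ for $n$ large since $M \ge (\log n)^2\tmix$. Applying the Markov property at time $t_0$ to the event $\{\tau_A > M\} = \{X_0,\dots,X_M \notin A\}$, started from $u$ and from $\pi$,
\begin{equation*}
\prstart{\tau_A > M}{u} = \sum_{w \notin A} \nu_u(w)\,H(w), \qquad \prstart{\tau_A > M}{\pi} = \sum_{w \notin A}\nu_\pi(w)\,H(w),
\end{equation*}
where $H(w) := \prstart{\tau_A > M - t_0}{w} \in [0,1]$, $\nu_u(w) := \prstart{\tau_A > t_0,\ X_{t_0} = w}{u}$ and $\nu_\pi(w) := \prstart{\tau_A > t_0,\ X_{t_0}=w}{\pi}$. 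Since $\prstart{\tau_A\le M}{u} - \Capp_M(A) = \prstart{\tau_A > M}{\pi} - \prstart{\tau_A > M}{u}$ and $0 \le H \le 1$,
\begin{equation*}
\left|\prstart{\tau_A\le M}{u} - \Capp_M(A)\right| \;\le\; \sum_{w \notin A}\bigl|\nu_u(w) - \nu_\pi(w)\bigr|.
\end{equation*}
To bound the right-hand side, note $\nu_u(w) \le p_{t_0}(u,w)$ with $\sum_{w \notin A}\bigl(p_{t_0}(u,w) - \nu_u(w)\bigr) \le \prstart{\tau_A \le t_0}{u}$, while by stationarity $\nu_\pi(w) \le \prstart{X_{t_0}=w}{\pi} = \pi(w)$ with $\sum_{w\notin A}\bigl(\pi(w) - \nu_\pi(w)\bigr) \le \prstart{\tau_A\le t_0}{\pi}$. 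Hence, by the triangle inequality and \eqref{eq:tvDecay} applied at time $t_0 = \lceil\log n\rceil\,\tmix$,
\begin{equation*}
\sum_{w\notin A}\bigl|\nu_u(w)-\nu_\pi(w)\bigr| \;\le\; 2\,d_{\textrm{TV}}\bigl(p_{t_0}(u,\cdot),\pi\bigr) + \prstart{\tau_A\le t_0}{u} + \prstart{\tau_A\le t_0}{\pi} \;\le\; 2^{1-\lceil \log n\rceil} + \frac{2t_0\,|A|}{\delta n}.
\end{equation*}
As $t_0 \le (\log n + 1)\tmix$ and, for $n$ large, the term $2^{1-\lceil\log n\rceil}$ is dominated by $\frac{\log n\,\tmix\,|A|}{\delta n}$ (recall $\tmix \ge 1$ and $|A|\ge 1$), the right-hand side is at most $\frac{3\log n\,\tmix\,|A|}{\delta n}$ once $n$ is large, which proves the claim.

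The only delicate point is the choice of the burn-in length: $t_0$ must be a large enough constant multiple of $\log n\cdot\tmix$ for \eqref{eq:tvDecay} to render the mixing error negligible against the target, yet small enough that the burn-in cost $\prstart{\tau_A\le t_0}{u}+\prstart{\tau_A\le t_0}{\pi}\lesssim \tfrac{2t_0|A|}{\delta n}$ still fits below $\tfrac{3\log n\,\tmix|A|}{\delta n}$. It is precisely the pointwise estimate $p_i(u,v)\le\tfrac1{\delta n}$ — a consequence of the linear minimal degree and of $G$ being simple — that makes the crude union bound over the burn-in steps sharp enough for both requirements to hold simultaneously; without it, one would have to argue more carefully that the small set $A$ cannot be hit unusually quickly from a worst-case starting point.
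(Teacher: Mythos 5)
Your decomposition---burn in to time $t_0$, then compare the restricted conditional laws at $t_0$ to get the two-sided bound $|\prstart{\tau_A\le M}{u}-\Capp_M(A)|\le\sum_{w\notin A}|\nu_u(w)-\nu_\pi(w)|$---is in the same spirit as the paper's proof, but there is a genuine gap in the lazy/non-lazy bookkeeping. In the paper, $p_t$, $\tmix$, and hence \eqref{eq:tvDecay} are defined for the \emph{lazy} random walk, whereas the hitting time $\tau_A$ and the capacity $\Capp_M$ in this claim (and throughout \cref{sctn:RW estimates}) refer to the \emph{non-lazy} walk. Your $\nu_u$ is the law of the non-lazy walk's position at time $t_0$ restricted to $\{\tau_A>t_0\}$, so the total-variation comparison with $\pi$ would require mixing for the \emph{non-lazy} walk at time $t_0$, which is not what \eqref{eq:tvDecay} supplies. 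This is not a formality: on a bipartite expander such as $K_{n/2,n/2}$ (which satisfies the hypotheses here), the non-lazy walk at any fixed time is supported on a single part, so $\sum_{w\notin A}|\nu_u(w)-\nu_\pi(w)|$ remains of order $1$ even though the claim itself is still true. The paper resolves precisely this by coupling the non-lazy walk $X$ with the non-repeat jumps of a lazy walk $\tilde X$ during a lazy burn-in of length $T=2\log n\cdot\tmix$: after $T$ lazy steps, comprising $N\le T$ real jumps, the shared position $\tilde X_T=X_N$ is within $2^{-2\log n}$ of $\pi$ in total variation regardless of periodicity, after which $X$ runs a further $M-N$ non-lazy steps. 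Inserting this coupling before invoking \eqref{eq:tvDecay} would repair your argument.

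A secondary, quantitative issue: you pay the burn-in union-bound cost twice (once from $u$, once from $\pi$), and your claimed domination of $2^{1-\lceil\log n\rceil}$ by $\frac{\log n\cdot\tmix|A|}{\delta n}$ fails if $\log$ denotes the natural logarithm, since then $2^{-\log n}=n^{-\ln 2}$, which is much larger than $\frac{\log n}{\delta n}$ when $\tmix$ and $|A|$ are $O(1)$. The paper sidesteps both: it uses the longer burn-in $T=2\log n\cdot\tmix$ so that $2^{-2\log n}$ is genuinely negligible, and it pays the burn-in cost only once by bounding the stationary contribution wholesale as $\prstart{\tau_A\le M}{\pi}=\Capp_M(A)$ rather than re-decomposing the $\pi$-started walk.
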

	\begin{proof}
	Let $X$ be a random walk started at $u \in G$. Clearly, for any $t \geq 0$, the first $t$ steps of $X$ can be coupled with the first $t$ non-repeat steps of a lazy random walk $\Tilde{X}$. Therefore, first run a lazy random walk started from $u$ until time $T=2\log n \cdot \tmix$. Let $N$ denote the total number of non-repeat jumps of this lazy random walk. The distribution of $\tilde{X}_t$ is almost stationary by \eqref{eq:tvDecay}. Moreover, we have that $0 \leq N \leq T$ deterministically. To sample $(X_t)_{t = 0}^M$, we first couple it with the first $N$ steps of $(\tilde{X}_t)_{t = 0}^T$ as explained above, and then run $X$ for a further $M-N$ steps. Under this coupling, we therefore have from a union bound that
	\[
	\prstart{\tau_A \leq M}{u} \leq \frac{2\log n \cdot \tmix |A|}{\delta n} + \prstart{\tau_A \leq M}{\pi} + 2^{-2\log n} \leq \Capp_M(A) + \frac{3\log n \cdot \tmix |A|}{\delta n}.
	\]
	Similarly,
	\[
	\prstart{\tau_A \leq M}{u} \geq \prstart{\tau_A \leq M-T}{\pi} - 2^{-2\log n} \geq \Capp_M(A) - \frac{3\log n \cdot \tmix |A|}{\delta n}.
	\]

	\end{proof}

In order to obtain lower bounds on capacity, we define the $\mathbf{k}$\textbf{-closeness} of two sets $U$ and $W$ by 
	\begin{equation}\label{def:close}
	\close_k (U,W) = \prstart{\tau_{U} < k, \tau_{W} < k}{\pi}.
	\end{equation}
	
\begin{corollary}\label{cor:closeness UB deter}
    For any disjoint sets $U, W \subset G$, we have that
	\begin{equation*}
	\sup_{v \in G \setminus (U \cup W)} \prstart{\tau_{U} < k, \tau_{W} < k}{v} \leq \frac{2k^2|U||W|}{\delta^2 n^2}.
	\end{equation*}
In particular, $\close_k (U, W) \leq \frac{2k^2|U||W|}{\delta^2 n^2}$.
\end{corollary}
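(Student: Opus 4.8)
The plan is to prove both assertions at once by a crude union bound over the pair of times at which $U$ and $W$ are first visited, in the spirit of the union bound behind \cref{lem:cap union bound}. On the event $\{\tau_U < k,\ \tau_W < k\}$ there exist integers $0 \le i \neq j \le k-1$ with $X_i \in U$ and $X_j \in W$, the two times being distinct because $U$ and $W$ are disjoint; hence, writing $\star$ for either a fixed starting vertex $v \notin U \cup W$ or the stationary distribution $\pi$,
\[
\prstart{\tau_U < k,\ \tau_W < k}{\star} \;\le\; \sum_{0 \le i < j \le k-1}\Big(\prstart{X_i \in U,\, X_j \in W}{\star} + \prstart{X_i \in W,\, X_j \in U}{\star}\Big).
\]

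The only additional ingredient is the elementary estimate that $\prstart{X_i \in A}{z} \le |A|/(\delta n)$ for every $A \subseteq V(G)$, every vertex $z$ and every $i \ge 1$: conditioning on $X_{i-1}$ reduces this to the one-step bound $\prstart{X_1 \in A}{y} = \sum_{a\in A} w(y,a)/\deg(y) \le |A|/\deg(y) \le |A|/(\delta n)$, valid for all $y$ since edge weights are at most $1$ and $G$ has minimal degree at least $\delta n$. Applying the Markov property at time $i$ and then this estimate (legitimately, since $j-i \ge 1$),
\[
\prstart{X_i \in U,\, X_j \in W}{\star} = \sum_{u \in U}\prstart{X_i = u}{\star}\,\prstart{X_{j-i} \in W}{u} \;\le\; \frac{|W|}{\delta n}\,\prstart{X_i \in U}{\star} \;\le\; \frac{|U|\,|W|}{\delta^2 n^2},
\]
where the last step uses the same estimate once more when $i \ge 1$, the fact that the $i=0$ term vanishes when $\star = v \notin U$, and the identity $\prstart{X_i \in U}{\pi} = \pi(U) \le |U|/(\delta n)$ when $\star = \pi$; the symmetric summand is handled identically. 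Summing over the $\binom{k}{2} \le k^2/2$ admissible pairs $(i,j)$ and the two orderings gives
\[
\prstart{\tau_U < k,\ \tau_W < k}{\star} \;\le\; \frac{k^2|U|\,|W|}{\delta^2 n^2} \;\le\; \frac{2k^2|U|\,|W|}{\delta^2 n^2},
\]
which is the claimed bound for $\star = v$ and, for $\star = \pi$, the bound on $\close_k(U,W)$.

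I expect this to be routine once one observes the uniform one-step hitting bound $\prstart{X_1 \in A}{y} \le |A|/(\delta n)$, which is the real crux and which propagates to all later times and all starting points. The only mild subtlety is that the bound on $\close_k(U,W)$ does not formally follow from the supremum bound, since $\pi$ may charge $U \cup W$, so the union-bound computation should be run directly from $\pi$ as above rather than by conditioning on the first step. One could instead split at $\min(\tau_U,\tau_W)$ via the strong Markov property and combine \cref{lem:cap union bound} with a uniform-start hitting bound for the second set, but this amounts to the same argument with more bookkeeping.
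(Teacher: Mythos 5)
Your proof is correct and in the same spirit as the paper's: both are union bounds driven by the one-step hitting estimate $\prstart{X_1 \in A}{y} \le |A|/(\delta n)$, which follows from the minimal-degree assumption. The only organizational difference is that the paper first decomposes on whether $\tau_U < \tau_W$ or vice versa and applies the strong Markov property at $\min(\tau_U,\tau_W)$, reducing the target to a product of two single-set hitting probabilities each bounded by \cref{lem:cap union bound}, whereas you perform a single double union bound over ordered pairs of times $0 \le i < j \le k-1$; these amount to the same computation and both give the claimed constant (your version in fact yields $k^2$ rather than $2k^2$). One thing you do slightly better: you correctly note that the ``in particular'' bound on $\close_k(U,W)$ does not formally follow from the supremum over $v \notin U \cup W$, since $\pi$ charges $U \cup W$, and you run the union bound directly from $\pi$ instead; the paper leaves this implicit.
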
	
\begin{proof}
Note that
\begin{align*}
\sup_{v \in G \setminus (U \cup W)} \prstart{\tau_{U} < k, \tau_{W} < k}{v} &\leq \sup_{v \in G \setminus (U \cup W)} \left\{ \prstart{\tau_U < \tau_W <k}{v} + \prstart{\tau_W < \tau_U <k}{v}\right\} \\
 &\leq \sup_{v \in G \setminus (U \cup W), u \in U, w \in W} \left\{ \prstart{\tau_U <k}{v}\prstart{\tau_W <k}{u} + \prstart{\tau_W <k}{v} \prstart{\tau_U <k}{w} \right\} \\&\leq \frac{2k^2|U||W|}{(\delta n)^2}.\qedhere
\end{align*}
\end{proof}

\subsection{Random variables}
Here we present two elementary results that will be useful in \cref{sctn:Wilson is stickbreaking}.

\begin{claim}\label{cl: two uniforms are close}
Let $\eps>0$ and let $0<a<b$ with $b-a \leq \eps$. Let $X_a \sim U\left(\left[0,a\right]\right)$ and $X_b \sim U\left(\left[0,b\right]\right)$. Then, we can couple $X_a$ and $X_b$ such that $\pr{|X_a - X_b| > \eps} < \eps$. 
\end{claim}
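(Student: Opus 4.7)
The plan is to exhibit the monotone coupling of the two uniform variables via a single underlying uniform random variable on $[0,1]$. Concretely, I would sample $U \sim U([0,1])$ once and set $X_a := aU$ and $X_b := bU$. A direct change of variables shows that $X_a$ is uniform on $[0,a]$ and $X_b$ is uniform on $[0,b]$, so the prescribed marginal distributions hold.

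Under this coupling one obtains deterministically
\[
|X_a - X_b| \;=\; |aU - bU| \;=\; (b-a)\, U \;\leq\; b - a \;\leq\; \eps,
\]
since $U \in [0,1]$ and $b-a \leq \eps$ by hypothesis. In particular $\pr{|X_a - X_b| > \eps} = 0 < \eps$, which proves the claim (and in fact gives the stronger bound that $|X_a - X_b| \leq \eps$ almost surely).

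There is no substantive obstacle here: the statement is an immediate consequence of the monotone coupling of two uniforms on nested intervals $[0,a] \subseteq [0,b]$. I expect this lemma to be used in \cref{sctn:Wilson is stickbreaking}, where the increments of Aldous' stick-breaking construction of the CRT must be compared to approximately uniform but not exactly uniform random lengths produced by Wilson's algorithm; placing both variables on the common probability space generated by $U$ lets one transport probability estimates between the two models without error terms larger than $\eps$.
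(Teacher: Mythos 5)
Your coupling is the same as the paper's: the paper samples $X_a\sim U([0,a])$ and sets $X_b=\frac{b}{a}X_a$, which is precisely your $X_a=aU$, $X_b=bU$ for $U\sim U([0,1])$. The argument and the resulting bound $|X_a-X_b|\le b-a\le\eps$ almost surely are identical; your proof is correct.
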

\begin{proof}
We take $X_b = \frac{b}{a}X_a$. Then, $|X_b - X_a| = |\frac{b-a}{a}\cdot X_a| \leq |b-a| \leq \eps$.	
\end{proof}

	\begin{lemma}\label{lem: proh close rv}
	For any $L>0$, let $X_L$ be the random variable on $(0, \infty)$ satisfying 
	\[
	\pr{X_L > x} = \exp\left\{-\frac{(x+L)^2-L^2}{2}\right\}.
	\]
	Then for any $\delta > 0$, there exists $\eta = \eta (\delta, L)>0$ such that the following holds. Let $Y$ be another random variable on $(0, \infty)$, and suppose that for all $x>0$,
	\begin{equation}\label{eqn:variables close}
	|\pr{X_L > x} - \pr{Y > x}|<\eta.
	\end{equation}
	Then this implies that we can couple $X_L$ and $Y$ so that $\pr{|X_L - Y|>\delta} < \delta$.
	
	Furthermore, for any $\delta, L_1$ and $L_2$ with $L_1< L_2$, there exists $\eta = \eta(\delta, L_1, L_2)$ such that we can couple $X_L$ and $Y$ as described above for every $L\in [L_1,L_2]$.
	\end{lemma}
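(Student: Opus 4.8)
The plan is to establish the coupling via the standard quantile (inverse CDF) coupling, and to show that closeness of the tail functions forces closeness of the quantile functions away from the endpoints. Write $F_L(x) = \pr{X_L > x} = \exp\{-((x+L)^2-L^2)/2\}$ for the tail function of $X_L$, and $G(x) = \pr{Y > x}$ for that of $Y$. Note that $F_L$ is continuous and \emph{strictly} decreasing from $F_L(0^+)=1$ to $0$ as $x \to \infty$, so it has a continuous strictly decreasing inverse $F_L^{-1}: (0,1) \to (0,\infty)$. The quantile coupling sets $X_L = F_L^{-1}(U)$ and $Y = \inf\{x : G(x) \le U\}$ for a single uniform $U \sim U([0,1])$; with this choice both have the correct laws, and the task reduces to controlling $|F_L^{-1}(U) - Y|$ on an event of probability at least $1-\delta$.

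First I would fix $\delta>0$ and choose a truncation level: pick $m>0$ small and $M<\infty$ large so that $\pr{X_L \le m} = 1 - F_L(m) < \delta/4$ and $\pr{X_L > M} = F_L(M) < \delta/4$; concretely one can take $M = M(\delta,L)$ with $F_L(M) = \delta/4$ and $m = m(\delta,L)$ with $F_L(m) = 1-\delta/4$. On the interval $[m,M]$ the function $F_L$ has a strictly negative derivative bounded away from $0$: indeed $F_L'(x) = -(x+L)F_L(x)$, so $|F_L'(x)| \ge (m+L)F_L(M) =: c = c(\delta,L) > 0$ for $x \in [m,M]$. Consequently $F_L^{-1}$ is Lipschitz with constant $1/c$ on the corresponding range of levels $[F_L(M), F_L(m)] \subset (0,1)$. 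Now I would take $\eta$ small enough that $\eta < \delta/4$ and $\eta/c < \delta$ (so $\eta = \eta(\delta,L) := \min\{\delta/4, c\delta/2\}$ works). The point is: on the event $\{U \in (F_L(M)+\eta,\ F_L(m)-\eta)\}$ — which has probability at least $F_L(m)-F_L(M) - 2\eta \ge 1 - \delta/2 - 2\eta \ge 1-\delta$ — we have $X_L = F_L^{-1}(U) \in (m,M)$, and since $|G - F_L| < \eta$ pointwise, the value $Y = \inf\{x: G(x)\le U\}$ satisfies $F_L(Y^-) \ge U - \eta$ and $F_L(Y) \le U + \eta$ (using monotonicity of $F_L$ and the sandwich $G - \eta \le F_L \le G + \eta$), hence $Y$ lies between $F_L^{-1}(U+\eta)$ and $F_L^{-1}(U-\eta)$, both of which are within $\eta/c$ of $F_L^{-1}(U) = X_L$ by the Lipschitz bound. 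Therefore $|X_L - Y| \le \eta/c < \delta$ on this event, which proves the first claim.

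For the uniform-in-$L$ statement, I would simply observe that all the quantities above can be chosen monotonically in $L$ over the compact interval $[L_1,L_2]$. Precisely: $F_L(x)$ is decreasing in $L$ for each fixed $x>0$, so taking $M$ with $F_{L_2}(M) = \delta/4$ gives $F_L(M) \le \delta/4$ for all $L \in [L_1,L_2]$; similarly taking $m$ with $F_{L_1}(m) = 1-\delta/4$ gives $1 - F_L(m) \le \delta/4$ for all such $L$. The derivative lower bound becomes $|F_L'(x)| \ge (m+L_1) F_{L_2}(M) =: c' = c'(\delta, L_1, L_2) > 0$ uniformly on $[m,M] \times [L_1,L_2]$, and then $\eta(\delta,L_1,L_2) := \min\{\delta/4, c'\delta/2\}$ works simultaneously for every $L \in [L_1,L_2]$, with the identical argument.

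The only mildly delicate point is the handling of the generalized inverse of $G$, since $Y$ need not have a continuous or strictly monotone CDF and the defining relation \eqref{eqn:variables close} is only an inequality; I expect this bookkeeping — correctly deducing $F_L^{-1}(U+\eta) \le Y \le F_L^{-1}(U-\eta)$ from $|G-F_L|<\eta$ together with the monotonicity of $F_L$, on the good event — to be the main thing to write carefully, but it is routine. Everything else is an elementary quantile-coupling estimate using the explicit, smooth, strictly decreasing form of $F_L$.
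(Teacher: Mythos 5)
Your approach — quantile (inverse-CDF) coupling, truncation, and a quantitative lower bound on how fast $F_L(x) = \exp\{-((x+L)^2-L^2)/2\}$ decreases on the truncated interval — is essentially the paper's proof. The paper phrases the key estimate as a lower bound $M_{\delta,L}$ on $F_L(x) - F_L(x+\delta)$ for $x < K_{\delta,L}$, rather than as a Lipschitz constant for $F_L^{-1}$, and it dispenses with the lower truncation $m$ (since $X_L, Y > 0$, small values of $X_L$ are handled trivially), but these are cosmetic differences.

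There is one genuine slip, in the uniform-in-$L$ extension: you correctly observe that $F_L(x) = e^{-x^2/2 - xL}$ is \emph{decreasing} in $L$ for fixed $x>0$, but then apply this monotonicity in the wrong direction when choosing the truncation levels. You propose $M$ with $F_{L_2}(M) = \delta/4$ and $m$ with $F_{L_1}(m) = 1-\delta/4$. But since $F_L(M)$ decreases in $L$, taking $L \le L_2$ gives $F_L(M) \ge F_{L_2}(M) = \delta/4$ — the opposite of the bound $\pr{X_L > M} < \delta/4$ you need — and symmetrically for $m$. The correct choice is $M$ with $F_{L_1}(M) = \delta/4$ (the worst, i.e.\ largest, right tail among $L \in [L_1,L_2]$ occurs at $L = L_1$) and $m$ with $F_{L_2}(m) = 1-\delta/4$. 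This matches what the paper does: it takes $K_{\delta,L_1}$ and $M_{\delta,L_1}$ and notes $\pr{X_{L'} \ge K_{\delta,L_1}} < \delta$ for all $L' > L_1$. With $L_1$ and $L_2$ swapped in those two places your argument goes through; the derivative lower bound $|F_L'(x)| \ge (m+L_1)F_{L_2}(M)$ on $[m,M]\times[L_1,L_2]$ is already stated correctly and is unaffected.
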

	\begin{proof}
	Note that we can couple $X_L$ and $Y$ by first sampling $U \sim \textsf{Uniform}([0,1])$ and setting 
	\begin{align*}
	    X_L (\omega) = \sup_{x \geq 0} \{ \pr{X_L \geq x} \geq U(\omega)\}, \hspace{1cm} Y (\omega) = \sup_{x \geq 0} \{ \pr{Y \geq x} \geq U(\omega)\}.
	\end{align*}
	Now choose $K_{\delta, L}<\infty$ so that $\pr{X_L \geq K_{\delta,L}}<\delta$. Wlog assume that $\delta<1$ and $K_{\delta, L}>1$, otherwise decrease or increase them if necessary. Note that, for all $0 \leq x< K_{\delta, L}$, we have that
	\begin{align*}
	   \exp\left\{-\frac{(x+L)^2-L^2}{2}\right\} - \exp\left\{-\frac{(x+\delta+L)^2-L^2}{2}\right\} \geq \delta (x+L)\exp\left\{-\frac{(x+\delta+L)^2-L^2}{2}\right\} \geq  M_{\delta, L},
	\end{align*}
	where $M_{\delta, L} = \delta L\exp\left\{-\frac{(2K_{\delta, L}+L)^2-L^2}{2}\right\} > 0$.
	
	Now suppose that \eqref{eqn:variables close} holds and $\eta < M_{\delta, L}$. Then, for any $0 \leq x< K_{\delta, L}$ we have that
	\begin{align*}
	    \pr{Y \geq x+\delta} \leq \pr{X_L \geq x+\delta} + \eta \leq \pr{X_L \geq x} - M_{\delta, L} + \eta \leq \pr{X_L \geq x}.
	\end{align*}
	Therefore, under the coupling, we have for any $x< K_{\delta, L}$ that
	\[
	\{X_L \leq x\} \Leftrightarrow \{ \pr{X_L \geq x} \leq U \} \Rightarrow \{ \pr{Y \geq x+\delta} \leq U \} \Leftrightarrow \{Y \leq x+\delta\}.
	\]
	Similarly, $\{X_L \geq x\} \Rightarrow \{Y \geq x-\delta\}$. Therefore, under this coupling we have that 
	\begin{align*}
	    \pr{|X_L - Y|>\delta} \leq \pr{X_L \geq K_{\delta,L}} < \delta,
	\end{align*}
	as required.
 
 For the second claim, note that for every $L' > L$ we also have that $\pr{X_{L'} \geq K_{\delta,L}}<\delta$. Therefore for the interval $[L_1,L_2]$ we can simply use $K_{\delta, L_1}$ and $M_{\delta, L_1}$ on the whole interval.
	\end{proof}

\subsection{GHP topology}\label{sctn:GHP def}

Here we define the GHP topology. We use the framework of \cite[Sections 1.3 and 6]{MiermontTessellations} and work in the space $\XXbb_c$ of equivalence classes of metric measure spaces (mm-spaces) $(X,d,\mu)$ such that $(X,d)$ is a compact metric space and $\mu$ is a Borel probability measure on it, and we say that $(X,d,\mu)$ and $(X',d',\mu')$ are equivalent if there exists a bijective isometry $\phi: X \to X'$ such that $\phi_* \mu = \mu'$ (here $\phi_*\mu$ is the pushforward measure of $\mu$ under $\phi$). To ease notation, we will represent an equivalence class in $\XXbb_c$ by a single element of that equivalence class. 
	
First recall that if $(X, d)$ is a metric space, the {\bf Hausdorff distance} $d_H$ between two sets $A, A' \subset X$ is defined as
	\[
	d_H(A, A') = \max \{ \sup_{a \in A} d(a, A'), \sup_{a' \in A'} d(a', A) \}.
	\]
For $\epsilon>0$ and $A\subset X$ we also let $A^{\epsilon} = \{ x \in X: d(x,A) < \epsilon \}$ be the $\epsilon$-fattening of $A$ in $X$. If $\mu$ and $\nu$ are two measures on $X$, the {\bf Prohorov distance} between them is given by
	\[
	d_P(\mu, \nu) = \inf \{ \epsilon > 0: \mu(A) \leq \nu(A^{\epsilon}) + \epsilon \text{ and } \nu(A) \leq \mu(A^{\epsilon}) + \epsilon \text{ for any closed set } A \subset X \}.
	\]
	
	\begin{definition} \label{def:GHP} Let $(X,d,\mu)$ and $(X',d',\mu')$ be elements of $\XXbb_c$. The \textbf{Gromov-Hausdorff-Prohorov} {\rm (GHP)} distance between $(X,d,\mu)$ and $(X',d',\mu')$ is defined as
		\[
		\dGHP((X,d,\mu),(X',d',\mu')) = \inf \left\{d_H (\phi(X), \phi'(X')) \vee d_P(\phi_* \mu, \phi_*' \mu') \right\},
		\]
		where the infimum is taken over all isometric embeddings $\phi: X \rightarrow F$, $\phi': X' \rightarrow F$ into some common metric space $F$.
	\end{definition}

Recall that our aim in this paper is to prove \textit{distributional} convergence with respect to the GHP topology. Given an mm-space $(X,d,\mu)$ and a fixed $m \in \NN$ we define a measure $\nu_m((X,d,\mu))$ on $\RR^{m \choose 2}$ to be the law of the ${m \choose 2}$ pairwise distances between $m$ i.i.d.~points drawn according to $\mu$. Each law $\Pb$ on $\XXbb_c$ therefore defines random measures $\left(\nu_m\right)_{m\geq 2}$ and annealed measures $\left(\tilde{\nu}_m\right)_{m\geq 2}$ on $\mathbb{R}^{\binom{m}{2}}$, given by
	\begin{equation*}
	\tilde{\nu}_m(\Pb) := \int_{\XXbb_c} \nu_m((X,d,\mu))d\Pb.
	\end{equation*}

In \cite{ANS2021ghp} we rephrased a result of \cite[Theorem 6.1]{AthreyaLohrWinterGap} in the distributional setting to characterize GHP convergence in terms of convergence of the measures $(\tilde{\nu}_m)_{m \geq 2}$ and a volume condition. To state the version that we will use in this paper, given $c > 0$ and an mm-space $(X,d,\mu)$ we define
	\begin{align*}
	m_c((X,d,\mu)) &= \inf_{x \in X}\{\mu (B(x, c))\}
	\end{align*}
(cf \cite[Section 3]{AthreyaLohrWinterGap}).

In the proof of the next proposition we will also make reference to the (coarser) Gromov-Prohorov topology, which is defined as follows.

\begin{definition} \label{def:GP} Let $(X,d,\mu)$ and $(X',d',\mu')$ be elements of $\XXbb_c$. The \textbf{Gromov-Prohorov} {\rm (GP)} distance between $(X,d,\mu)$ and $(X',d',\mu')$ is defined as
		\[
		\dGP((X,d,\mu),(X',d',\mu')) = \inf \left\{ d_P(\phi_* \mu, \phi_*' \mu') \right\},
		\]
		where the infimum is taken over all isometric embeddings $\phi: X \rightarrow F$, $\phi': X' \rightarrow F$ into some common metric space $F$.
	\end{definition}

The key result is as follows.

\begin{proposition}\label{prop:GP plus LMB gives GHP}
Let $(X,d,\mu)$ be an element of $\XXbb_c$ with law $\Pb$ such that $\mu$ has full support almost surely. Let $((X_n,d_n,\mu_n))_{n\geq 1}$ be a sequence in $\XXbb_c$ with respective laws $(\Pb_n)_{n\geq 1}$ and suppose that:
		\begin{enumerate}[(a)]
			\item For all $m \geq 0$, $\tilde{\nu}_m(\Pb_n) \to \tilde{\nu}_m(\Pb)$ as $n \to \infty$.
			\item For any $c > 0$, the sequence $\left(m_c((X_n,d_n,\mu_n))^{-1}\right)_{n \geq 1}$ is tight.
		\end{enumerate}
		Then $(X_n, d_n, \mu_n) \overset{(d)}{\to} (X,d,\mu)$ with respect to the $\GHP$ topology.
\end{proposition}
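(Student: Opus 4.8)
The plan is to deduce this from the Athreya–Löhr–Winter characterization of GHP convergence (cited as \cite[Theorem 6.1]{AthreyaLohrWinterGap}, as rephrased in \cite{ANS2021ghp}), combining two ingredients: (i) Gromov–Prohorov convergence, which is exactly the statement that the annealed pairwise-distance measures $\tilde\nu_m$ converge for all $m$ (this is the classical characterization of the GP topology, going back to Greven–Pfaffelhuber–Winter and used in \cite{AthreyaLohrWinterGap}), and (ii) the lower mass bound tightness condition (b). So the first step is to record that hypothesis (a) is equivalent to $(X_n,d_n,\mu_n) \overset{(d)}{\to} (X,d,\mu)$ in the GP topology. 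This is where one must be slightly careful: convergence of $\tilde\nu_m$ for all $m$ gives convergence of all ``GP moments'', and since the GP topology on $\XXbb_c$ is Polish and these moments are convergence-determining, one gets distributional GP convergence — but one should check that the limiting measures $\tilde\nu_m(\Pb)$ indeed characterize the limit law $\Pb$, which holds because $\mu$ has full support almost surely (so that sampling i.i.d.\ points recovers the whole space), hence no mass is ``lost'' and the GP limit is genuinely $(X,d,\mu)$ rather than some degenerate quotient.

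**Second**, I would invoke a Skorokhod-type representation: since GP convergence in distribution of a sequence in a Polish space can be realized almost surely on a common probability space, pass to a coupling in which $(X_n,d_n,\mu_n) \to (X,d,\mu)$ almost surely in the GP sense, and moreover (by tightness in (b), after a further subsequence-and-diagonal argument if needed, or directly since tightness is a distributional statement preserved under the coupling) the random variables $m_c((X_n,d_n,\mu_n))^{-1}$ remain tight. The point of \cite[Theorem 6.1]{AthreyaLohrWinterGap} is precisely that GP convergence together with the lower mass bound (the sequence $m_{\varepsilon_k}(X_n)^{-1}$ being bounded along the coupling, for a sequence $\varepsilon_k \downarrow 0$) upgrades almost sure GP convergence to almost sure GHP convergence. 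Concretely, GHP convergence fails relative to GP convergence only when mass concentrates in regions that are metrically ``thin'', and the lower mass bound rules this out by forcing every $\varepsilon$-ball to carry at least a fixed amount of mass, uniformly in $n$; this is what lets one promote the Prohorov control of measures to Hausdorff control of the supports. Having obtained almost sure GHP convergence along the coupling, distributional GHP convergence of the original sequence follows immediately.

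**The main obstacle** is the bookkeeping around the tightness hypothesis: condition (b) is stated as tightness of the full sequence of real random variables $m_c((X_n,d_n,\mu_n))^{-1}$ for each fixed $c>0$, whereas the almost-sure version of the lower mass bound needed to apply \cite[Theorem 6.1]{AthreyaLohrWinterGap} wants a single coupling on which, for a fixed null-sequence $(c_k)_k$, one simultaneously has GP convergence and boundedness (in probability, uniformly) of $m_{c_k}(X_n)^{-1}$. The cleanest route is to extract, by a diagonal argument, a subsequence along which everything converges/is controlled almost surely on one coupling, apply the theorem to conclude GHP convergence of that subsequence, and then note that since every subsequence of the original sequence has a further sub-subsequence converging in distribution to the same GHP limit $(X,d,\mu)$, the whole sequence converges in distribution in GHP. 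One should also double-check the harmless technical point that the GHP and GP topologies induce the same Borel $\sigma$-algebra on $\XXbb_c$ (they do, since GHP is finer and both are Polish, so the identity map is a Borel isomorphism), so that ``convergence in distribution'' is unambiguous and the subsequence argument is valid. Everything else — the equivalence of (a) with GP convergence, the Skorokhod representation, the statement of the ALW criterion — is quotable or standard, and I would cite \cite{ANS2021ghp} for the precise distributional rephrasing rather than re-derive it.
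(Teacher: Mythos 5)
Your overall strategy — establish GP convergence in distribution, then upgrade to GHP using the Athreya–L\"ohr–Winter criterion via the lower mass bound — is the same as the paper's, and your remarks about the role of the full-support assumption (uniqueness of the limit via Gromov reconstruction) and about topological consistency are both sound. However, there is one genuine gap in the first step.

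You assert that hypothesis (a) alone is equivalent to distributional GP convergence, on the grounds that the GP ``moments'' $\tilde\nu_m$ are convergence-determining. This is not correct as stated. The correct result (the one the paper cites, \cite[Corollary 3.1]{GrevenPfaffWinterGwConvergence}) says that distributional GP convergence is equivalent to the conjunction of \emph{two} conditions: convergence of the distance matrix distributions $\tilde\nu_m$ \emph{and} tightness (relative compactness) of the sequence of laws $(\Pb_n)$ in the GP topology. Because $\XXbb_c$ with the GP topology is Polish but not locally compact, the class of functions defining the $\tilde\nu_m$ is separating (so it identifies the limit) but not automatically convergence-determining; without tightness, convergence of the $\tilde\nu_m$ identifies a candidate limit but does not on its own force weak convergence of $\Pb_n$. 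The paper closes this gap by verifying GP tightness through \cite[Theorem 3]{GrevenPfaffWinterGwConvergence}, whose two sufficient conditions are: tightness of $\tilde\nu_2$ as measures on $\RR$ (which follows from (a)), and a uniform small-ball mass control — and it is \emph{this} second condition that uses hypothesis (b). So the lower mass bound is used twice in the correct proof: once to establish GP tightness, and once to upgrade GP to GHP; your proof omits the first use. With that fix in place, the rest of your argument (Skorokhod representation plus a subsequence-and-diagonal argument to extract almost-sure GHP convergence along a coupling) would be valid, though it is somewhat more elaborate than needed — the paper simply invokes the already-formalized distributional version of the ALW criterion, \cite[Theorem 6.5]{ANS2021ghp}, in one step.
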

\begin{proof}
First we show that part (a) and (b) together imply that $(X_n, d_n, \mu_n) \overset{(d)}{\longrightarrow} (X,d,\mu)$ with respect to the GP topology, by verifying the two conditions of \cite[Corollary 3.1]{GrevenPfaffWinterGwConvergence}. The second condition of \cite[Corollary 3.1]{GrevenPfaffWinterGwConvergence} is precisely (a). To verify the first condition we further use \cite[Theorem 3]{GrevenPfaffWinterGwConvergence} (recall that by Prohorov's Theorem the relative compactness of the measures is equivalent to their tightness) and verify conditions (i) and (ii) there (see also Proposition 8.1 in \cite{GrevenPfaffWinterGwConvergence}). Condition (i) is just saying that $\tilde{\nu}_2$ is a tight sequence of measures on $\RR$, which follows from (a). Lastly, (b) directly implies  condition (ii).

Therefore, by \cite[Theorem 6.5]{ANS2021ghp}, the spaces convergence with respect to the GHP topology.
\end{proof}

\section{Stick-breaking construction of trees}\label{sctn:stick breaking background}

Our first goal will be to prove condition (a) of \cref{prop:GP plus LMB gives GHP} which is equivalent to the following statement.

\begin{theorem}\label{thm:pr04}
Take $\gamma>0$ and $\delta>0$ and let $(G_n)_{n \geq 1}$ be a dense sequence of $\gamma$-expanders, where each $G_n$ has $n$ vertices and minimal degree at least $\delta n$. Denote by $d_{\T_n}$ the graph distance on $\T_n$ and by $(\T,d,\mu)$ the CRT.  Then there exists a sequence $(\beta_n)_{n \geq 1}$, satisfying $\sqrt{\delta} \leq \beta_n \leq 1$ for all $n \geq 1$, such that for any fixed $k \geq 1$, if $\{x_1,\ldots, x_k\}$ are uniformly chosen independent vertices of $G_n$, then the distances 
$$ \frac{d_{\T_n}(x_i,x_j)}{\beta_n \sqrt{n}} $$
converge jointly in distribution to the ${k \choose 2}$ distances in $\T$ between $k$ i.i.d.~points drawn according to $\mu$. 
\end{theorem}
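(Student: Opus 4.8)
The plan is to use Wilson's algorithm rooted at $x_1$ to build the subtree of $\T_n$ spanned by $x_1,\dots,x_k$ branch by branch, and to compare the resulting (random) sequence of branch lengths with Aldous' stick-breaking construction of the CRT. Concretely, I would first reduce to understanding the following inductive object: having already revealed a subtree $T$ (initially $\{x_1\}$, later the union of the first few LERW branches), run a LERW from the next point $x_{i}$ to $T$ and record the length of this branch together with the location on $T$ where it attaches. Theorem \ref{thm:pr04} is then equivalent, via the standard fact that pairwise distances in $\T_n$ are determined by these branch lengths and attachment points, to showing that the vector of rescaled branch lengths $\big(\,\mathrm{len}_\ell/(\beta_n\sqrt n)\,\big)_\ell$ together with the attachment data converges to the corresponding vector in Aldous' stick-breaking construction — where the $\ell$-th stick, given total previous length $L$, has length with tail $\exp\{-((x+L)^2-L^2)/2\}$ (this is exactly the distribution appearing in \cref{lem: proh close rv}) and attaches to a uniform point of the tree built so far.

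The heart of the argument is the length computation for a single LERW branch. Using the Laplacian random walk representation from \cref{sctn:Laplacian RW}, the probability that the LERW from a (roughly uniform) start vertex to $T$ survives for $m$ steps factorizes as a random-walk path probability times the product $C(\cdots)$ of capacity ratios. The walk itself, run for $m = \Theta(\sqrt n)$ steps, reaches near-stationarity within $\tmix = O(\log n)$ steps (\cref{claim:mixing time expander}), so the path probability is essentially $\prod$ of $\pi$-masses; meanwhile each factor in $C$ is a ratio of the form $\pr{\tau_T < \tau_{\text{past}}^{(+)}}$, which by \cref{claim:capacity different start point} and \cref{lem:cap union bound} is well-approximated by $1 - \Capp_m(T \cup \text{past trajectory})$, and $\Capp_m$ of a set of size $s$ behaves like $\approx (\beta_n^2 m/n)\cdot(\text{something})$ — up to the correction $\alpha_W$-type constant, which is where $\beta_n$ (equivalently $\alpha_n = \beta_n^{-2}$) enters; its value $\sqrt\delta \le \beta_n \le 1$ follows from the two-sided capacity bounds. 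Summing the per-step survival probabilities and using that the capacity of the growing LERW trace itself is negligible (it is a thin set, and crucially by the minimal-degree/expansion hypothesis the walk never visits any low-degree region — cf. \cref{rmk:generalisations}), I would obtain that $\pr{\mathrm{len}_\ell > x\beta_n\sqrt n \mid \text{past}} \to \exp\{-((x+L)^2 - L^2)/2\}$ where $L$ is the rescaled total length revealed so far. The attachment point is uniform because the LERW, having mixed, hits $T$ at an approximately $\pi$-distributed — hence approximately uniform, since degrees are comparable — vertex.

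Once the one-branch statement is in hand, I would iterate over the $O(k)$ branches, at each stage conditioning on the revealed subtree; the error terms are uniform enough (the relevant sets stay of size $O(\sqrt n \cdot k)$, well below the thresholds in \cref{lem:cap union bound} and \cref{claim:capacity different start point}) that the accumulated total variation error between the Wilson process and an exact stick-breaking process with the $X_L$-distributed sticks goes to $0$. I would pass to the limit using \cref{lem: proh close rv} (and \cref{cl: two uniforms are close} for the attachment points), which is exactly designed to convert the tail-function convergence into a coupling with small Prohorov error, uniformly over $L$ in a bounded range — and the range is bounded whp since the tree has diameter $O(\sqrt n)$. This yields joint convergence of all branch lengths and attachment points, hence of the $\binom k2$ pairwise distances, to Aldous' stick-breaking CRT.

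The main obstacle I anticipate is the careful bookkeeping in the single-branch length estimate: one must control the capacity ratios $C(\cdots)$ to enough precision that the \emph{sum} over $\Theta(\sqrt n)$ steps of the per-step correction converges to the quadratic exponent $((x+L)^2-L^2)/2$ rather than merely being $O(1)$, which requires tracking how $\Capp_m$ grows as both $m$ and the target set $T$ (of size $\Theta(\sqrt n)$) grow together, and showing the self-interaction of the LERW trace contributes only a lower-order term. This is precisely the step flagged as the bulk of the paper, carried out in \cref{sctn:RW estimates}, \cref{sctn:Wilson analysis} and \cref{sctn:Wilson is stickbreaking}.
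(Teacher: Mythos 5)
Your proposal tracks the paper's argument closely --- Wilson's algorithm analyzed via the Laplacian random walk representation, capacity concentration for LERW segments, the exponential survival computation via products of hitting ratios over blocks of length $M$, and coupling with stick-breaking through \cref{lem: proh close rv} and \cref{cl: two uniforms are close} --- and you correctly flag that the required precision in the capacity-ratio factor $C(\cdots)$, summed over $\Theta(\sqrt n)$ steps, is the technical crux addressed in \cref{sctn:RW estimates}--\cref{sctn:Wilson is stickbreaking}.

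There is, however, a genuine misconception in your justification for the uniform attachment point. You argue that the LERW, having mixed, ``hits $T$ at an approximately $\pi$-distributed --- hence approximately uniform, since degrees are comparable --- vertex.'' Neither half is right: the hitting distribution of a stationary random walk on a set $T$ is harmonic measure, not $\pi$ restricted to $T$; and degrees being comparable (within a factor $\delta^{-1}$) only controls errors up to constants, nowhere near the $(1+o(1))$ precision needed for the attachment to converge to the length measure on the limiting tree --- keep in mind that $\alpha_n \neq 1$ for genuinely non-regular sequences, so there is no asymptotic regularity to lean on. The actual reason, and the conceptual point your sketch misses, is that the attachment probability in a connected $A \subset T$ is proportional to $\Capp_M(A)$ (\cref{cor:hit A in next interval}, \cref{cor:main stick breaking ingredient}(2)), and the concentration statement for random-walk capacity (\cref{prop:RW cap}, pushed to LERW by \cref{cor:capacity tail}) shows that \emph{because $T$ is built from LERW pieces}, one has $\Capp_M(A) = \frac{\alpha_n M |A|}{n}\left(1+o(1)\right)$ uniformly over connected $A \subset T$ with $|A| \geq n^{3\neweps}$; this is the ``goodness'' property of \cref{def:good tree}, propagated inductively via \cref{cor:main stick breaking ingredient}(3). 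Uniformity of the attachment point is thus a capacity-concentration fact about LERW traces, not a near-regularity fact about the ambient graph, and the argument as you sketched it would not produce the needed uniform measure.
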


To prove this theorem, we will use Aldous' stick-breaking construction of the CRT which is particularly well adapted to dealing with the pairwise distances between a set of $k$ uniform points. Our strategy will be to show that the first $k$ steps of Wilson's algorithm on $G_n$ closely approximate those of this stick-breaking process when $n$ is large. In this section we briefly recall the stick-breaking construction of the CRT and some of its key properties.

We start with a more general description of how one can construct a sequence of trees from sticks on the real line.

\begin{definition}(Stick-breaking construction of a tree sequence). \label{def:stick breaking}
Set $y_0=z_0=0$, and suppose that we have a sequence of points $y_1, y_2, \ldots \in [0, \infty)$ and $z_1, z_2, \ldots \in [0, \infty)$ such that $y_{i-1} < y_{i}$ and $z_i \leq y_{i}$ for all $i \geq 1$. Construct trees as follows. Start by taking the line segment $[y_0, y_1)$ at time $1$. This is $T^{(2)}$ (as it contains two marked points). We proceed inductively. At time $i \geq 2$, take the interval $[y_{i-1}, y_{i})$ and attach the base of the interval $[y_{i-1}, y_{i})$ to the point on $T^{(i)}$ corresponding to $z_{i-1}$. This gives a new tree with $i+1$ marked points (in bijection with the set $(y_j)_{j=0}^{i}$), which we call $T^{(i+1)}$.

Given two such sequences and any $k \geq 2$ we define $\SBk ((y_0, y_1, y_2, \ldots), (z_0, z_1, z_2, \ldots ))$ or equivalently $\SBk ((y_0, y_1, y_2, \ldots, y_{k-1}), (z_0, z_1, z_2, \ldots, z_{k-2} ))$ to be equal to the tree $\Tk$.
\end{definition}

In general, the sequence of trees constructed in this way above may not converge, but Aldous showed that by choosing the points in the right way, we can in fact construct the CRT via stick-breaking.

\begin{proposition}\cite[Process 3]{AldousCRTI}.\label{prop:CRT stick breaking}
Set $Y_0=Z_0=0$, let $(Y_1, Y_2, \ldots)$ denote the ordered set of points of a non-homogeneous Poisson process on $[0, \infty)$ with intensity $t \ dt$, and let $Z_i$ be chosen uniformly on the interval $[0,Y_{i})$ for each $i \geq 1$. Construct the sequence $(T^{(k)})_{k=2}^{\infty}$ as in \cref{def:stick breaking}. Then the closure of the limit of $T^{(k)}$ is equal in distribution to the CRT. Moreover, if one stops the process after $k-1$ steps, then the resulting tree $T^{(k)}$ has the same distribution as the subtree spanned by $k$ uniform points in the CRT, and the points corresponding to the set $(Y_i)_{i=0}^{k-1}$ can be identified with $k$ uniform points in the CRT.
\end{proposition}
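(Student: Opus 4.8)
The plan is to follow Aldous and obtain $\Tk$ as the $\sqn$-scaling limit of a discrete spanning subtree of a uniform random labelled tree, thereby reducing the proposition to an exact computation in the discrete setting together with the (separately known) convergence of uniform labelled trees to the CRT. Fix $k$, let $\mathbb{T}_n$ be a uniform random labelled tree on $[n]$, pick $v_1,\dots,v_k$ independent uniform vertices, and reveal the subtree $S^{(k)}_n$ spanned by them one vertex at a time: first the path between $v_1$ and $v_2$, then successively the path from each $v_{j+1}$ to the portion $S^{(j)}_n$ already revealed. Equivalently this is Wilson's algorithm run on the complete graph $K_n$ and stopped after $k-1$ loop-erased walks, so each branch is a loop-erased random walk on $K_n$.

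The discrete computation is the self-contained core. By a Cayley-type enumeration (or a direct loop-erased-walk computation on $K_n$), conditionally on $S^{(j)}_n$ having $s$ vertices the next branch has a length $\ell$ whose exact law one writes down, and — by invariance of the uniform tree law under relabelling — its endpoint in $S^{(j)}_n$ is uniform among those $s$ vertices, independently of $\ell$. Setting $s=\sigma\sqn$, the finite-$n$ law satisfies $\pr{\ell/\sqn > x \mid |S^{(j)}_n|=s} \to \exp\{-\tfrac{(x+\sigma)^2-\sigma^2}{2}\}$, i.e.\ $\ell/\sqn$ converges in law to the variable with that tail, which is $X_\sigma$ in the notation of \cref{lem: proh close rv}, while the rescaled attachment position converges to a uniform point of the rescaled tree $\tfrac{1}{\sqn}S^{(j)}_n$. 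Comparing with \cref{def:stick breaking}, these are exactly the transitions of the continuum construction: an increment $Y_j-Y_{j-1}$ of a Poisson process of intensity $t\,dt$ satisfies $\pr{Y_j-Y_{j-1}>x\mid Y_{j-1}=\sigma}=\pr{X_\sigma>x}$ and $Z_{j-1}$ is uniform on $[0,Y_{j-1}]$. Feeding the convergence of these conditional laws into the coupling estimates of \cref{lem: proh close rv} (in the form uniform in $\sigma$ over a compact range) and \cref{cl: two uniforms are close}, and inducting on $j$, one obtains that the rescaled data of $S^{(k)}_n$ converges jointly in distribution to the Poisson/uniform data, hence $\tfrac{1}{\sqn}S^{(k)}_n$ converges, as a measured $\RR$-tree, to $\Tk$, jointly for all $k$.

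Finally, to identify the limit with the CRT: by Aldous' theorem, $\mathbb{T}_n$ with graph distance rescaled by $\sqn$ and equipped with its uniform vertex measure converges in the GHP sense to $(\T,d_\T,\mu)$; since spanning a subtree from $k$ i.i.d.\ sampled points is continuous for this convergence (the subtree of a tree is determined by the $\binom{k}{2}$ pairwise distances, which converge), the law of $\tfrac{1}{\sqn}S^{(k)}_n$ also converges to the law of the subtree of the CRT spanned by $k$ i.i.d.\ $\mu$-points. That law therefore equals the law of $\Tk$, which is the ``moreover'' assertion, with $(Y_i)_{i=0}^{k-1}$ corresponding to the $k$ sampled points. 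For the first assertion, note $(\Tk)_{k\geq2}$ is increasing with total length $Y_{k-1}\to\infty$ a.s., and since the $Z_i$ are uniform the marked points are a.s.\ dense in the closure $T^{(\infty)}$ of $\bigcup_k\Tk$; hence the empirical measures on $\{y_0,\dots,y_{k-1}\}$ converge weakly to a mass measure $\mu_\infty$, $(T^{(\infty)},d,\mu_\infty)$ has the same law of pairwise distances of $k$ i.i.d.\ sampled points as $\T$ for every $k$, and a random compact measured $\RR$-tree with full support is determined in law by that family, so $T^{(\infty)}\overset{(d)}{=}\T$. The main obstacle is precisely this identification step, which imports two nontrivial external facts — GHP convergence of uniform labelled trees to the CRT, and the reconstruction of a random mm-space from its finite marginals; the discrete enumeration, while elementary, also has to be carried out with enough uniformity to deliver joint (in $k$) convergence and the independence of the attachment points. (Alternatively, one can bypass the first external fact by matching $\Tk$ directly against the explicit law of the $k$-leaf reduced tree of the CRT read off from the Brownian excursion coding it, and then passing to the limit as above.)
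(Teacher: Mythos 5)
The paper does not prove this proposition; it is quoted directly from \cite[Process 3]{AldousCRTI}. Your argument is correct, and it runs along the discrete-to-continuum lines of Aldous' own treatment: the line-breaking / Wilson-on-$K_n$ construction of a uniform labelled tree, the observation (by the symmetry of $K_n$) that the attachment vertex is uniform on the revealed subtree and independent of the new branch length, the exact stick-length law and its Rayleigh-type $\sqrt{n}$-limit, and the inductive coupling of the conditional transitions using a version of the coupling lemma that is uniform over a compact range of the running length. The one place where you take a shortcut is the identification of the limit with the CRT: you import GHP (or even just finite-dimensional) convergence of rescaled uniform labelled trees to the CRT as a black box and then use continuity of the $k$-sample reduced-tree functional. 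That reduction is legitimate but imports a theorem that is at least as heavy as the proposition itself and logically postdates it in Aldous' development; your parenthetical alternative --- matching $T^{(k)}$ against the explicit law of the $k$-leaf reduced tree of the normalized Brownian excursion --- is the cleaner and non-circular route if, as is usual, the CRT is defined via the excursion and Process~3 is meant to be a derived characterization. A smaller remark: the appeal to Gromov reconstruction at the end is more than you need. Once $(T^{(k)})_k$ agrees in law, jointly over $k$, with the sequence of $k$-sample reduced trees of the CRT, and the latter are a.s.\ dense in the CRT because $\mu$ has full support, the closures are equal in distribution directly, with no need to reconstruct a random mm-space from its distance-matrix distributions.
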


In particular, the set of ${k \choose 2}$ pairwise distances between points corresponding $(Y_i)_{i=1}^k$ is equal in distribution to the set of ${k \choose 2}$ pairwise distances between $k$ uniform points in the CRT.

The following proposition will be important for the comparison with Wilson's algorithm later on. It can be verified by a direct computation.

\begin{proposition}\label{prop:CRT stick probs}
Define the sequence $(Y_1, Y_2, \ldots)$ as in \cref{prop:CRT stick breaking}. Then for any $k \geq 1$ and any $x \geq 0$,
\[
\prcond{Y_{k+1} - Y_k \geq x}{(Y_i)_{i=0}^k}{} = \exp\left\{-\frac{1}{2} \left((Y_k+x)^2 - Y_k^2\right)\right\}.
\]
\end{proposition}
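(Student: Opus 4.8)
The statement is Proposition~\ref{prop:CRT stick probs}, which I would prove as follows.

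\medskip

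The plan is to compute the conditional tail directly from the definition of the non-homogeneous Poisson process generating $(Y_i)_{i \geq 1}$. Recall $(Y_1, Y_2, \ldots)$ is the ordered point set of a Poisson process on $[0,\infty)$ with intensity measure $\lambda(dt) = t\,dt$, so that the number of points in an interval $[a,b]$ is Poisson distributed with parameter $\int_a^b t\,dt = \tfrac12(b^2 - a^2)$, and counts over disjoint intervals are independent. The key structural fact I would invoke is that for a Poisson process, conditionally on the location of the $k$-th point $Y_k = y$, the restriction of the process to $(y, \infty)$ is again a Poisson process with the same intensity $t\,dt$ on $(y,\infty)$, independent of the configuration on $[0,y]$ (this is the strong Markov property of the Poisson process at the stopping "time" $Y_k$, or equivalently follows from the standard conditioning description of Poisson processes). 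In particular, conditionally on $(Y_i)_{i=0}^k$, the event $\{Y_{k+1} - Y_k \geq x\}$ is exactly the event that the Poisson process has \emph{no} points in the interval $(Y_k, Y_k + x]$.

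\medskip

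Carrying this out: first I would fix $k \geq 1$ and $x \geq 0$, and condition on $(Y_i)_{i=0}^k$, in particular on $Y_k = y$. By the above, the number $N$ of points of the process in $(y, y+x]$ is Poisson with parameter $\int_y^{y+x} t\, dt = \tfrac12\big((y+x)^2 - y^2\big)$, and is independent of $(Y_i)_{i=0}^k$. Then
\[
\prcond{Y_{k+1} - Y_k \geq x}{(Y_i)_{i=0}^k}{} = \pr{N = 0} = \exp\left\{-\frac12\left((Y_k+x)^2 - Y_k^2\right)\right\},
\]
which is the claimed identity. (For $k=0$ one reads $Y_0 = 0$ and recovers $\pr{Y_1 \geq x} = e^{-x^2/2}$, consistent with $Y_1$ being the first point.) Alternatively, if one prefers to avoid quoting the Markov property of Poisson processes as a black box, one can instead start from the joint density of $(Y_1, \dots, Y_{k+1})$, which is $\big(\prod_{i=1}^{k+1} y_i\big) \exp\{-\tfrac12 y_{k+1}^2\} \mathbbm{1}\{0 < y_1 < \cdots < y_{k+1}\}$ (obtained from the Poisson process description), integrate out $y_{k+1}$ over $[y_k + x, \infty)$ and divide by the marginal density of $(Y_1,\dots,Y_k)$; the factors $\prod_{i=1}^k y_i$ and the constraint $0 < y_1 < \cdots < y_k$ cancel, leaving $\int_{y_k+x}^\infty y_{k+1} e^{-y_{k+1}^2/2}\,dy_{k+1} \big/ \int_{y_k}^\infty y_{k+1} e^{-y_{k+1}^2/2}\,dy_{k+1} = e^{-((y_k+x)^2 - y_k^2)/2}$.

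\medskip

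There is essentially no obstacle here: the result is, as the paper notes, a direct computation, and the only point requiring a moment's care is the bookkeeping of the conditioning — making sure that conditioning on the full vector $(Y_i)_{i=0}^k$ (not just on $Y_k$) genuinely reduces to conditioning on $Y_k$ for the purpose of this tail probability, which is immediate since the increments of the Poisson process beyond $Y_k$ are independent of the earlier points. I would present the Poisson-process Markov-property argument as the main line since it is cleanest, and perhaps remark that the density computation gives an alternative.
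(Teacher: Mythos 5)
Your proof is correct, and since the paper offers no proof beyond the remark that the identity ``can be verified by a direct computation,'' your argument is exactly the direct computation being alluded to: both the Poisson-count argument (no points in $(Y_k, Y_k+x]$, a Poisson random variable of mean $\int_{Y_k}^{Y_k+x} t\,dt$, independent of the past) and the backup density calculation are standard and accurate. No discrepancy with the paper.
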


The following lemma will also be useful.

\begin{lemma}\label{lem: size of stick}
There exists a function $f:[0, \infty) \times \NN \to [0,1]$ such that for every $k\in \NN$ we have that $\lim_{C\to\infty} f(C,k) \to 0$, and such that if $Y_k$ is as in \cref{prop:CRT stick breaking}, then
\begin{equation*}
    \pr{C^{-1} \leq Y_k \leq C} \geq 1 - f(C,k).
\end{equation*}
\end{lemma}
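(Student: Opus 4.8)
The plan is to prove a two-sided bound on $Y_k$ by leveraging the explicit formula from \cref{prop:CRT stick probs}. Recall that $Y_k = \sum_{i=1}^k (Y_i - Y_{i-1})$, and that conditionally on $(Y_i)_{i=0}^{j}$, the increment $Y_{j+1} - Y_j$ has tail $\exp\{-\frac12((Y_j+x)^2 - Y_j^2)\}$. The upper bound is the easy direction: for the first increment, $\pr{Y_1 \geq x} = e^{-x^2/2}$, and more generally each conditional increment is stochastically dominated by a half-normal-type variable since $(Y_j + x)^2 - Y_j^2 \geq x^2$. Hence $Y_k$ is stochastically dominated by a sum of $k$ i.i.d.\ variables each with tail $e^{-x^2/2}$, so $\pr{Y_k > C} \leq k \pr{Y_1 > C/k} = k e^{-C^2/(2k^2)} \to 0$ as $C \to \infty$, for each fixed $k$.

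For the lower bound I would argue that $Y_k$ cannot be too small. Since $Y_k \geq Y_1 = Y_1 - Y_0$, and $\pr{Y_1 \leq \epsilon} = 1 - e^{-\epsilon^2/2} \leq \epsilon^2/2$, we immediately get $\pr{Y_k \leq C^{-1}} \leq \pr{Y_1 \leq C^{-1}} \leq \frac{1}{2C^2} \to 0$ as $C \to \infty$. Thus in fact only the very first stick already controls the lower bound, which is convenient because $Y_1$ has a clean unconditional law.

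Combining the two estimates, I would set $f(C,k) = k e^{-C^2/(2k^2)} + \frac{1}{2C^2}$ (capping at $1$ if necessary so that $f$ maps into $[0,1]$), and note that $\lim_{C\to\infty} f(C,k) = 0$ for every fixed $k \in \NN$, while $\pr{C^{-1} \leq Y_k \leq C} \geq 1 - \pr{Y_k > C} - \pr{Y_k < C^{-1}} \geq 1 - f(C,k)$. This is essentially a routine computation once the stochastic domination and the monotonicity $Y_k \geq Y_1$ are in place; I do not anticipate any genuine obstacle, the only mild care needed is to make the stochastic domination of the sum precise (e.g.\ by conditioning successively on $(Y_i)_{i=0}^{j}$ and using the tail bound $(Y_j+x)^2 - Y_j^2 \geq x^2$ at each step to couple each increment with an independent copy of $Y_1$ from above).
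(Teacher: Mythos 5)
Your proof is correct. The paper states this lemma without proof (it is an elementary fact about the stick-breaking / inhomogeneous Poisson process), so there is no paper argument to compare against. Both halves of your argument are sound: for the upper tail, the observation that $(Y_j+x)^2-Y_j^2\geq x^2$ gives pointwise domination of each conditional increment's tail by $e^{-x^2/2}$, and the successive inverse-CDF coupling you sketch does produce i.i.d.\ dominating variables $Z_1,\dots,Z_k$ with $Y_k\leq\sum_i Z_i$; the pigeonhole/union bound then yields $\pr{Y_k>C}\leq k\,e^{-C^2/(2k^2)}$. For the lower tail, monotonicity $Y_k\geq Y_1$ together with $\pr{Y_1\leq\epsilon}=1-e^{-\epsilon^2/2}\leq\epsilon^2/2$ is exactly right. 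Capping $f$ at $1$ handles $C$ near $0$, and $f(C,k)\to 0$ as $C\to\infty$ for each fixed $k$. One minor remark: you could bypass the coupling entirely by noting that $(Y_i)$ are the points of a Poisson process with intensity $t\,dt$, so the number of points in $[0,C]$ is Poisson of mean $C^2/2$ and $\pr{Y_k>C}=\pr{\mathrm{Poisson}(C^2/2)<k}$, which tends to $0$ as $C\to\infty$ for each fixed $k$; but your stochastic-domination route is equally valid and uses only Proposition~\ref{prop:CRT stick probs}.
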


\begin{proposition}\label{prop:stick breaking close}
Let $(y_0, y_1, y_2, \ldots), (z_0, z_1, z_2, \ldots)$ and $(y_0', y_1', y_2', \ldots), (z_0', z_1', z_2', \ldots)$ be the inputs to two separate stick-breaking processes as defined in Definition \ref{def:stick breaking}. Fix any $k \geq 1$ and let $T^{(k+1)}$ and $T^{(k+1)'}$ be the trees formed after $k$ steps of the processes. Let $d$ and $d'$ denote distances on $T^{(k+1)}$ and $T^{(k+1)'}$.

Fix some $\epsilon>0$ and suppose that the following holds.
\begin{enumerate}[(i)]
    \item $|y_i - y_i'| \leq \epsilon$ for all $i \leq k$ and $|z_i - z_i'| \leq \epsilon$ for all $i \leq k-1$,
    \item $|z_i - y_j| \geq 3\epsilon$ for all $i \leq k-1,j \leq k$.
\end{enumerate}
Then, for all $0 \leq i,j \leq k$, it holds that
\[
|d(y_i, y_j) - d'(y_i', y_j')| \leq 2k\epsilon.
\]
\end{proposition}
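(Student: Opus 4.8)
The plan is to prove this by induction on $k$, tracking how the distance between marked points evolves as each new stick is attached. The key structural fact about the stick-breaking construction is that in $T^{(k+1)}$, the point $y_i$ lies at the endpoint of the $i$-th stick, and the distance $d(y_i,y_j)$ decomposes as a sum of stick-lengths and partial stick-lengths along the unique path between them in the tree. Specifically, writing the path from $y_i$ to $y_j$ through the branch structure, $d(y_i,y_j)$ is a signed combination of the quantities $(y_\ell - y_{\ell-1})$ (full sticks) and terms of the form $(z_\ell - y_{\ell-1})$ or $(y_\ell - z_\ell)$ arising at the attachment points $z_\ell$. Each such term appears with coefficient $\pm 1$, and the total number of terms along any root-to-$y_i$ path is at most $k$ (one per stick added), so the path between $y_i$ and $y_j$ uses at most $2k$ such terms.

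First I would make precise the claim that $d(y_i,y_j)$ can be written as $\sum_{\ell} \sigma_\ell a_\ell$ where each $a_\ell \in \{\, y_\ell - y_{\ell-1},\ z_\ell - y_{\ell-1},\ y_\ell - z_{\ell-1}, \ldots\,\}$ is one of finitely many affine expressions in the $y$'s and $z$'s with integer coefficients bounded by $1$, $\sigma_\ell \in \{+1,-1\}$, and the total count of terms is at most $2k$. Crucially, hypothesis (ii), that $|z_i - y_j| \geq 3\epsilon$ for all relevant $i,j$, guarantees that the combinatorial structure of the tree — i.e., which stick each $z_i$ attaches to, and hence the form of the decomposition — is \emph{identical} for both input sequences. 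This is because $z_i$ attaches to the point of $T^{(i)}$ at arclength $z_i$ from the root, and moving each coordinate by at most $\epsilon$ (hypothesis (i)) cannot change which stick-interval $z_i$ falls into, nor its relative order with the marked points $y_j$, since it is bounded away from all of them by $3\epsilon > 2\epsilon$. So the two trees $T^{(k+1)}$ and $T^{(k+1)'}$ are built with the same attachment pattern, and $d(y_i,y_j)$ and $d'(y_i',y_j')$ are given by the \emph{same} affine formula evaluated at $(y,z)$ versus $(y',z')$ respectively.

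Given that, the proof concludes quickly: $|d(y_i,y_j) - d'(y_i',y_j')|$ is bounded by the sum over the at most $2k$ terms of $|a_\ell - a_\ell'|$, and each $a_\ell$ is a difference of two coordinates, so $|a_\ell - a_\ell'| \leq 2\epsilon$ by hypothesis (i) — wait, this would give $4k\epsilon$, so to get the sharper bound $2k\epsilon$ I would instead observe that the affine formula for $d(y_i,y_j)$ can be written directly as an integer combination $\sum_m c_m y_m + \sum_m e_m z_m$ with $\sum_m (|c_m| + |e_m|) \leq 2k$ (each stick contributes total coefficient weight at most $2$ to the path, and there are at most $k$ sticks, but more carefully the path from root to $y_i$ contributes weight at most... ), so that $|d - d'| \leq \sum_m |c_m| |y_m - y_m'| + \sum_m |e_m| |z_m - z_m'| \leq \big(\sum_m |c_m| + \sum_m |e_m|\big)\epsilon \leq 2k\epsilon$.

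The main obstacle is the bookkeeping in the first two paragraphs: setting up the decomposition of $d(y_i,y_j)$ cleanly enough that (a) the coefficient-weight bound $2k$ is transparent, and (b) the claim that hypothesis (ii) freezes the combinatorial type is rigorous. I expect the cleanest route is an explicit induction: when stick $k$ is attached at $z_{k-1}$, the new point $y_k$ satisfies $d(y_k, y_j) = (y_k - y_{k-1}) + d(\text{attachment point}, y_j)$, and the attachment point's distance to $y_j$ differs from $d(z_{k-1}, y_j)$ — which is itself expressible via the inductive hypothesis on $T^{(k)}$, using that $z_{k-1} < y_{k-1}$ lies on some earlier stick — by controlled terms; one then checks the total coefficient weight increases by at most $2$ per step. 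Hypothesis (ii) enters precisely to ensure $z_{k-1}$'s position among the earlier marked points (needed to express $d(z_{k-1}, y_j)$) is the same for both sequences. This is routine but fiddly, and is the step I would write out most carefully.
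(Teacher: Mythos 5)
Your approach is essentially the paper's: observe that condition (ii) forces the two processes to have the same attachment pattern (each $z_i$ falls between the same consecutive $y_j$'s in both sequences), so that $d(y_i,y_j)$ and $d'(y_i',y_j')$ are the \emph{same} expression evaluated at $(y,z)$ versus $(y',z')$, then bound the perturbation term by term. You reach that point correctly.

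The place where your write-up wobbles is the final count. You first say the $y_i$-to-$y_j$ path uses at most $2k$ terms (two root-to-leaf paths), which would only give $4k\epsilon$; you then notice the mismatch and propose instead a coefficient-weight bound $\sum_m(|c_m|+|e_m|)\leq 2k$, but you leave its justification incomplete (\enquote{but more carefully the path from root to $y_i$ contributes weight at most\ldots}). The clean fix, and what the paper does, is simply a sharper segment count: in $T^{(k+1)}$ the only branch points are (generically) $z_1,\ldots,z_{k-1}$, so any geodesic between two marked points crosses at most $k-1$ internal nodes and hence consists of at most $k$ branch segments. Each segment length is of the form $|y_\ell-y_{\ell-1}|$, $|z_\ell-y_m|$, or $|z_\ell-z_m|$; condition (ii) fixes the sign inside each absolute value, so each segment length changes by at most $2\epsilon$, and summing over at most $k$ segments gives $2k\epsilon$ directly. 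With that observation your argument closes without the induction or the coefficient-weight detour, both of which add bookkeeping without buying anything here.
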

\begin{proof}
When conditions $(i)$ and $(ii)$ hold, we have for all $i \leq k-1,j \leq k$ that $y_j \leq z_i \leq y_{j+1}$ if and only if $y_j' \leq z_i' \leq y_{j+1}'$. We claim that this implies that $|d(y_i, y_j) - d'(y_i', y_j')| \leq 2k\epsilon$ for all $i, j \leq k+1$. Indeed, it follows by construction that $d(y_i, y_j)$ is the sum of lengths of at most $k$ branch segments in $T^{(k+1)}$, and all of their lengths can be written in the form $|y_j - y_{j-1}|, |z_j - y_{\ell}|$ or $|z_j - z_{\ell}|$. Moreover, by construction, when the conditions $(i)$ and $(ii)$ hold, $d'(y_i', y_j')$ can be written as the same sum but replacing each $z_j$ with $z_j'$ and replacing each $y_j$ with $y_j'$. It therefore follows from the triangle inequality that $|d(y_i, y_j) - d'(y_i', y_j')| \leq 2k\epsilon$.
\end{proof}

\section{Random walk properties}\label{sctn:RW estimates}
In this section we prove some results on random walk hitting probabilities and capacity, which we will later transfer to segments of LERW using the Laplacian random walk representation of \cref{sctn:Laplacian RW}.

Throughout the section we fix a small $\neweps \in (0, \frac{1}{32})$ and for $n \geq 1$ we set $M_n = n^{\neweps}$. In what follows we will simply write $M$ instead of $M_n$.

\textbf{Notational remark.} For the statements in this section, we will take a sequence of graphs satisfying the assumptions of \cref{thm:main2} which is therefore associated with two positive constants $\gamma>0$ and $\delta>0$. In this section we will treat these constants as fixed, and therefore $o(\cdot)$ and $O(\cdot)$ quantities may also depend on $\gamma$ and $\delta$.

\subsection{Hitting probabilities}
We start with some results on hitting probabilities. Let $X$ be a (non-lazy) random walk on $G_n$ for some $n \geq 1$. For a set $A \subset V(G_n)$, we define 
\[
\tau_A = \inf\{t \geq 0: X_t \in A\}.
\]
The main lemma is the following.

 \begin{lemma}\label{lem:hitting ratio precise}
Take $\gamma>0$ and $\delta>0$ and let $(G_n)_{n \geq 1}$ be a dense sequence of $\gamma$-expanders, where each     $G_n$ has $n$ vertices and minimal degree at least $\delta n$. Take $\neweps$ and $M$ as defined at the start of \cref{sctn:RW estimates}. Then there exists a sequence $(\eta_n)_{n \geq 1}$ with $\eta_n \to 0$, depending only on $\delta$ and $\gamma$, such that for any disjoint $A, B \subset G_n$ satisfying $|A| + |B| \leq \frac{\delta^3}{2}n^{\frac{1}{2}+2\neweps}$:
 \[
 \left|\prstart{\tau_A < \tau_B}{\pi} -\frac{\Capp_M(A)}{\Capp_M(A)+\Capp_M(B)} \right| \leq \frac{\Capp_M(A)\eta_n}{\Capp_M(A)+\Capp_M(B)}.
 \]
\end{lemma}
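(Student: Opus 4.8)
The plan is to run the stationary walk in blocks of $M$ steps and argue by a renewal-type decomposition: in each block, conditionally on not yet having hit $A\cup B$, the walk hits $A$ before $B$ with probability essentially $\Capp_M(A)$ and hits $B$ before $A$ with probability essentially $\Capp_M(B)$, uniformly over the block's starting point; summing the resulting geometric series then gives $\prstart{\tau_A<\tau_B}{\pi}\approx \Capp_M(A)/(\Capp_M(A)+\Capp_M(B))$.

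First I would record the elementary size bounds. The cases $A=\emptyset$ or $B=\emptyset$ are trivial (both sides equal $0$ or $1$), so assume both are nonempty; then $\Capp_M(A)\ge\pi(A)>0$ and $\Capp_M(B)>0$. Since $\neweps<1/32$, the hypothesis $|A|+|B|\le\tfrac{\delta^3}{2}n^{1/2+2\neweps}$ gives $M(|A|+|B|)\le\tfrac{\delta^3}{2}n^{1/2+3\neweps}\le\tfrac{\delta^3}{2}n$ for large $n$, so \cref{lem:cap union bound} applies and yields $\tfrac{M\delta|A|}{2n}\le\Capp_M(A)\le\tfrac{M|A|}{\delta n}$, and likewise for $B$; in particular $q:=\Capp_M(A)+\Capp_M(B)\to 0$. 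By \cref{claim:mixing time expander}, $\tmix=\tmix(G_n)=O(\log n)$, so $M=n^{\neweps}$ dominates $(\log n)^2\tmix$ and the size requirements of \cref{claim:capacity different start point} and \cref{cor:closeness UB deter} are met (using again $|A|+|B|\le\tfrac{\delta^3}{2}n^{1/2+2\neweps}$).

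The key single-block estimate is that there exists $\epsilon_n\to 0$, depending only on $\delta$ and $\gamma$, such that for every $u\in V(G_n)\setminus(A\cup B)$ one has $\prstart{\tau_A\le M,\ \tau_A<\tau_B}{u}=\Capp_M(A)(1+O(\epsilon_n))$, and similarly with $A$ replaced by $B$ or by $A\cup B$. For the upper bound, $\prstart{\tau_A\le M,\tau_A<\tau_B}{u}\le\prstart{\tau_A\le M}{u}$, which by \cref{claim:capacity different start point} differs from $\Capp_M(A)$ by at most $\tfrac{3\log n\cdot\tmix|A|}{\delta n}$; dividing by the lower bound $\Capp_M(A)\ge\tfrac{M\delta|A|}{2n}$ turns this into a relative error $O\big(\tfrac{(\log n)^2}{n^{\neweps}}\big)$. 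For the lower bound I subtract $\prstart{\tau_A\le M,\tau_B\le M}{u}$, which by \cref{cor:closeness UB deter} is $O\big(\tfrac{M^2|A||B|}{\delta^2n^2}\big)$; dividing by $\Capp_M(A)\ge\tfrac{M\delta|A|}{2n}$ and using $M|B|\le\tfrac{\delta^3}{2}n^{1/2+3\neweps}$ gives a relative error $O(n^{3\neweps-1/2})$. Averaging over $u$ extends the estimate to any starting distribution supported off $A\cup B$. This step is the crux of the argument: the point is to convert the \emph{additive} error of \cref{claim:capacity different start point} into a \emph{multiplicative} one, which is exactly what the capacity lower bound of \cref{lem:cap union bound} provides, and this is where the restriction $\neweps<1/32$ and the hypothesis on $|A|+|B|$ are used.

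With the single-block estimate in hand I would finish by decomposing $\{\tau_A<\tau_B\}$ according to the block in which $\sigma:=\tau_{A\cup B}$ occurs. Let $\nu_{j-1}$ denote the law of the walk at time $(j-1)M$ conditioned on $\{\sigma>(j-1)M\}$, so $\nu_0=\pi$ and each $\nu_{j-1}$ is supported on $V(G_n)\setminus(A\cup B)$. The strong Markov property gives $\prstart{\tau_A<\tau_B}{\pi}=\sum_{j\ge 1}\big(\prod_{i=1}^{j-1}(1-r(\nu_{i-1}))\big)r_A(\nu_{j-1})$, where $r_A(\nu)=\prstart{\tau_A\le M,\tau_A<\tau_B}{\nu}$ and $r=r_A+r_B$, the series converging since the factors decay geometrically. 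By the single-block estimate $r_A(\nu_{i-1})=\Capp_M(A)(1+O(\epsilon_n))$ and $r(\nu_{i-1})=q(1+O(\epsilon_n))$ uniformly in $i$, so this is to be compared with the honest geometric series $\sum_{j\ge 1}(1-q)^{j-1}\Capp_M(A)=\Capp_M(A)/q$. I would split at $T=\lceil(q\sqrt{\epsilon_n})^{-1}\rceil$: for $j\le T$ the per-block multiplicative errors compound to a factor $\exp(O(Tq\epsilon_n))=1+O(\sqrt{\epsilon_n})$, contributing total error at most $O(\sqrt{\epsilon_n})\cdot\Capp_M(A)/q$; and for $j>T$ both the true sum and the geometric comparison have tails bounded by $O(1)\cdot\tfrac{\Capp_M(A)}{q}(1-q/2)^T\le O(1)\cdot\tfrac{\Capp_M(A)}{q}e^{-1/(2\sqrt{\epsilon_n})}$. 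Taking $\eta_n$ to be a fixed multiple of $\sqrt{\epsilon_n}+e^{-1/(2\sqrt{\epsilon_n})}$ then gives the lemma, with $\eta_n\to 0$ depending only on $\delta$ and $\gamma$. The only real obstacle is the single-block estimate; the summation is routine once the cutoff $T$ is chosen to balance the compounding head errors against the geometric tail.
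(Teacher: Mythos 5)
Your proposal is correct and rests on the same two pillars as the paper: a single-block estimate for $\prstart{\tau_A<\tau_B, \tau_{A\cup B}\leq M}{u}$, converted from an additive to a multiplicative error via the capacity lower bound of \cref{lem:cap union bound} together with \cref{claim:capacity different start point} and \cref{cor:closeness UB deter}, followed by a decomposition of time into blocks of length $M$. The only structural difference is in how the blocks are summed. The paper avoids tracking a geometric series: writing $\prstart{\tau_A<\tau_B}{\pi}\geq\sum_{i\geq 0}\prcond{\tau_A<\tau_B}{\tau_{A\cup B}\in[iM,(i+1)M]}{\pi}\prstart{\tau_{A\cup B}\in[iM,(i+1)M]}{\pi}$, it lower-bounds each conditional factor by $\inf_u \prcond{\tau_A<\tau_B}{\tau_{A\cup B}\leq M}{u}$ via the Markov property at time $iM$, and then uses that the block probabilities sum to $1$; the upper bound follows by exchanging $A$ and $B$. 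This sidesteps your need to choose a cutoff $T\asymp (q\sqrt{\epsilon_n})^{-1}$ and control the compounding of multiplicative errors over the first $T$ blocks and the tails beyond. Your renewal calculation is valid (the telescoping $\prod_{i<j}(1-r(\nu_{i-1}))=\pr{\sigma>(j-1)M}$ is exactly the strong Markov property, and the cutoff balances head compounding against geometric tails correctly), but it adds bookkeeping that the paper's conditioning identity renders unnecessary.
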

 \begin{proof}
Let $(X_i)_{i=1}^M$ be a random walk of length $M$. Then, by Bayes' formula, \cref{cor:closeness UB deter} and the lower bound in \cref{lem:cap union bound},
\begin{align*}
\prcond{\tau_A <M}{\tau_A \wedge \tau_B < M}{\pi} &= \frac{\Capp_M(A)}{\Capp_M(A) + \Capp_M(B) - \prstart{\tau_A \vee \tau_B <M}{\pi}} \\
&= \frac{\Capp_M(A)}{\Capp_M(A) + \Capp_M(B)} \left( 1+O\left(\frac{\delta^{-3}M|B|}{n}\right)\right). \\
\prcond{\tau_A \vee \tau_B < M}{\tau_A \wedge \tau_B < M}{\pi} &= \frac{\prstart{\tau_A \vee \tau_B <M}{\pi}}{\Capp_M(A) + \Capp_M(B) - \prstart{\tau_A \vee \tau_B <M}{\pi}} \\
&\leq \prcond{\tau_A <M}{\tau_A \wedge \tau_B < M}{\pi} O\left( \frac{\delta^{-3}M|B|}{n} \right).
\end{align*}
Therefore, combining these and applying \cref{lem:cap union bound}:
\begin{align*}
\prcond{\tau_A < \tau_B}{\tau_A \wedge \tau_B < M}{\pi} &= \prcond{\tau_A <M}{\tau_A \wedge \tau_B < M}{\pi} +O( \prcond{\tau_A \vee \tau_B <M}{\tau_A \wedge \tau_B < M}{\pi}) \\
&= \frac{\Capp_M(A)}{\Capp_M(A) + \Capp_M(B)}\left( 1 +O\left(\frac{\delta^{-3}M|B|}{n}\right)\right).
\end{align*}
It similarly follows from \cref{claim:capacity different start point} and the lower bound in \cref{lem:cap union bound} that uniformly over all $u \in G_n \setminus (A \cup B)$,
\begin{align*}
\prcond{\tau_A < \tau_B}{\tau_A \wedge \tau_B < M}{u} &= \frac{\Capp_M(A)}{\Capp_M(A) + \Capp_M(B)}\left( 1 +O\left(\frac{\delta^{-3}M|B|}{n} + \frac{\tmix \cdot \log n}{\delta^2 M}\right)\right).
\end{align*}

Now we decompose time into intervals of length $M$. For each $i \geq 1$, define the interval $A_i$ by
\[
A_i = [iM, (i+1)M].
\]
We then have that, using \cref{cor:closeness UB deter}:
\begin{align*}
 \prstart{\tau_A < \tau_B}{\pi} &\geq \sum_{i =0}^{\infty} \prcond{\tau_A < \tau_B}{\tau_{A \cup B} \in A_i}{\pi} \prstart{\tau_{A \cup B} \in A_i}{\pi} \\
 &\geq \sum_{i =0}^{\infty} \inf_{u \in G_n \setminus (A \cup B)} \prcond{\tau_A < \tau_B}{\tau_{A \cup B} \in A_0}{u} \prstart{\tau_{A \cup B} \in A_i}{\pi} \\
 &\geq \frac{\Capp_M(A)}{\Capp_M(A) + \Capp_M(B)}\left( 1 +O\left( \frac{\delta^{-3}|B|M}{n} + \frac{\tmix \cdot \log n}{\delta^2 M}\right)\right) 
\end{align*}
We deduce that, uniformly over all permitted $A$ and $B$,
\begin{equation}
    \prstart{\tau_A < \tau_B}{\pi} \geq \frac{\Capp_M(A)}{\Capp_M(A) + \Capp_M(B)}(1-o_{\delta, \gamma}(1)),
\end{equation}
where the $o_{\delta, \gamma}(1)$ term is uniform over all $A$ and $B$ but may depend on $\delta$ and $\gamma$. Similarly, for an upper bound on  $\prstart{\tau_A < \tau_B}{\pi}$ we simply exchange the roles of $A$ and $B$.
We deduce that, uniformly over all permitted $A$ and $B$,
\begin{equation}
    \prstart{\tau_A < \tau_B}{\pi} = \frac{\Capp_M(A)}{\Capp_M(A) + \Capp_M(B)}(1-o_{\delta, \gamma}(1)).
\end{equation}
\end{proof}

We will also need the following minor adaptation.

\begin{lemma}\label{lem:hitting ratio fixp}
Take $\gamma>0$ and $\delta>0$ and let $(G_n)_{n \geq 1}$ be a dense sequence of $\gamma$-expanders, where each $G_n$ has $n$ vertices and minimal degree at least $\delta n$. Take $\neweps$ and $M$ as defined at the start of \cref{sctn:RW estimates}. Then, for any disjoint $A, B \subset G_n$ satisfying $|A| + |B| \leq \frac{\delta^3}{2}n^{\frac{1}{2}+2\neweps}$, every $u\in G_n \setminus (A \cup B)$ and every $v\in G_n \setminus A$ we have that
\begin{equation*}
 \prstart{\tau_A < \tau_B}{u} = \prstart{\tau_A < \tau_B}{\pi}(1+o(n^{3\neweps-1/2}\tplus)) \ \ \text{ and } \ \ \prstart{\tau_A < \tau_B^+}{v} = \prstart{\tau_A < \tau_B}{\pi}(1+o(n^{3\neweps-1/2}\tplus)) .
\end{equation*}
\end{lemma}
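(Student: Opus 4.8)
The plan is to run the random walk for a burn-in period so that its law becomes almost stationary, and then apply \cref{lem:hitting ratio precise} from (approximately) stationarity. Concretely, for the first statement, start a (non-lazy) walk $X$ from $u \in G_n \setminus (A \cup B)$, couple its first $T := (\log n)^2 \tplus$ steps with the non-repeat steps of a lazy walk $\tilde X$ exactly as in the proof of \cref{claim:capacity different start point}, and split according to whether $\tau_{A \cup B} \leq T$ or not. On the event $\{\tau_{A \cup B} > T\}$, the walk's position at time $T$ has total variation distance at most $2^{-(\log n)^2}$ from $\pi$ restricted to the complement of $A\cup B$, and one may restart the analysis from there using the uniform-in-starting-point estimate already proved inside \cref{lem:hitting ratio precise} (the displayed bound $\prcond{\tau_A < \tau_B}{\tau_A \wedge \tau_B < M}{u} = \tfrac{\Capp_M(A)}{\Capp_M(A)+\Capp_M(B)}(1+o(\cdots))$, uniform over $u \notin A \cup B$). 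On the event $\{\tau_{A \cup B} \leq T\}$, one of $A,B$ is hit within $T$ steps; by \cref{claim:capacity different start point} (with $M$ replaced by $T$) the probability of this event is $O(\log n \cdot \tplus \, |A\cup B| / (\delta n)) = o(n^{3\neweps - 1/2}\tplus)$ given the size constraint $|A|+|B| \leq \tfrac{\delta^3}{2} n^{1/2 + 2\neweps}$, and on this event we simply bound the conditional probability of $\{\tau_A < \tau_B\}$ crudely by comparing to $\prstart{\tau_A < \tau_B}{\pi}$ via the same restart argument applied from the first hitting point's "post-mixing" continuation — or more simply absorb the whole event into the error term since $\prstart{\tau_A<\tau_B}{\pi} = \Theta(\Capp_M(A)/(\Capp_M(A)+\Capp_M(B)))$ is of order at least $\Capp_M(A) = \Theta(M|A|/n)$ by \cref{lem:cap union bound}, and a short computation shows $T|A\cup B|/n$ is a lower-order multiple of this. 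Assembling the two cases gives $\prstart{\tau_A < \tau_B}{u} = \prstart{\tau_A < \tau_B}{\pi}(1 + o(n^{3\neweps - 1/2}\tplus))$.

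For the second statement, concerning $\prstart{\tau_A < \tau_B^+}{v}$ for $v \in G_n \setminus A$ (note $v$ is now allowed to lie in $B$), decompose on the first step of the walk from $v$: after one step the walk is at a uniformly random neighbour of $v$, which lies outside $A$ with probability $1 - O(|A|/(\delta n))$ (using the minimal degree bound), and then $\{\tau_A < \tau_B^+\}$ becomes $\{\tau_A < \tau_B\}$ started from that neighbour, which is in $G_n \setminus A$. One then needs the analogue of the first statement but with the weaker hypothesis $v \notin A$ rather than $v \notin A \cup B$; this is obtained by the identical burn-in argument, the only change being that the event $\{\tau_{A\cup B} \leq T\}$ now also includes immediate returns to $B$, which again contributes only to the error term by the same capacity estimate. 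The factor $1 + o(n^{3\neweps-1/2}\tplus)$ is preserved because $|A|/(\delta n) \leq \delta^2 n^{-1/2 + 2\neweps}/2 = o(n^{3\neweps - 1/2}\tplus)$.

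The main obstacle — really the only subtle point — is controlling the contribution of the burn-in event $\{\tau_{A\cup B} \leq T\}$ relative to the size of the main term: one must verify that the "cost" $T \cdot |A \cup B|/n$ of a possible early hit is genuinely smaller (by a factor $n^{-1/2+2\neweps}$-ish) than $\prstart{\tau_A<\tau_B}{\pi}$ itself, which requires a lower bound on the latter, and this is exactly where the hypothesis $|A| + |B| \leq \tfrac{\delta^3}{2} n^{1/2+2\neweps}$ and the lower bound in \cref{lem:cap union bound} (giving $\Capp_M(A) \geq \delta M |A| /(2n)$, valid precisely when $M|A| \leq \delta^3 n /2$, which holds since $M = n^{\neweps}$) are used in concert. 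Everything else is a routine repackaging of \cref{lem:hitting ratio precise}, \cref{claim:capacity different start point} and the coupling of a non-lazy walk with the non-repeat steps of a lazy walk, so I would keep the write-up short and cite those results rather than reproving them.
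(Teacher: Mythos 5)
Your overall plan (burn in for $T = \Theta(\mathrm{polylog}(n)\,\tmix)$ steps, then read off the estimate from near-stationarity, and argue that hitting $A \cup B$ during the burn-in is a negligible event) is in the right spirit and matches the paper's strategy. However, the way you dispose of the burn-in event is wrong, and this is exactly where the real work lies.

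The flaw is in the "more simply absorb the whole event into the error term" alternative. You claim that $T|A\cup B|/n$ is a lower-order multiple of $\prstart{\tau_A < \tau_B}{\pi}$. This fails badly when $|A|$ is much smaller than $|B|$. Using the lower bound $\prstart{\tau_A < \tau_B}{\pi} \geq \frac{\delta^2 |A|}{2(|A|+|B|)}$ coming from \cref{lem:hitting ratio precise} and \cref{lem:cap union bound}, the ratio
\[
\frac{T(|A|+|B|)/(\delta n)}{\prstart{\tau_A<\tau_B}{\pi}} \lesssim \frac{T(|A|+|B|)^2}{\delta^3 n |A|}
\]
blows up: take, say, $|A| = 1$ and $|B| = \tfrac{\delta^3}{2} n^{1/2+2\neweps}-1$, and the right-hand side is of order $T\,n^{4\neweps}$, not $o(n^{3\neweps-1/2}\tplus)$. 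Your other quoted comparison (against $\Capp_M(A) = \Theta(M|A|/n)$) has the same failure: $T(|A|+|B|)/(M|A|)$ is not small for small $|A|$. So the additive error $\prstart{\tau_{A\cup B}\le T}{u} = O(T|A\cup B|/n)$ is \emph{not} automatically a small relative error of $\prstart{\tau_A < \tau_B}{\pi}$, and the lemma's conclusion is a relative (multiplicative) one.

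The observation you are missing, and which the paper's proof hinges on, is that the relevant bad event is strictly smaller than $\{\tau_{A\cup B}\le T\}$: on $\{\tau_A < \tau_B\}$, $\tau_{A\cup B}$ equals $\tau_A$, so $\{\tau_A < \tau_B,\ \tau_{A\cup B}\le T\} \subseteq \{\tau_A \le T\}$, whose probability is $\le T|A|/(\delta n)$ — note $|A|$, not $|A\cup B|$. This is precisely what the paper exploits: in the lower bound, the subtraction is of $\prstart{\tau_{A\cup B} < T < \tau(A,T) < \tau(B,T)}{u}$, which it further decomposes so as to extract a factor of $|A|$ (or of $\prstart{\tau_A < \tau_B}{\pi}$ itself); in the upper bound, the error term is $\prstart{\tau_A < T}{u} \le T|A|/(\delta n)$. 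Only then does the ratio become $O((|A|+|B|)\tplus/(\delta^3 n)) = o(n^{3\neweps - 1/2}\tplus)$. Your alternative phrasing ("compare to $\prstart{\tau_A<\tau_B}{\pi}$ via the restart argument applied from the first hitting point's post-mixing continuation") is too vague to save this: conditioned on $\tau_{A\cup B}\le T$, the event $\{\tau_A < \tau_B\}$ is already decided by the identity of the first hitting point, so there is no further continuation to appeal to. A secondary but related gap: conditioning on $\{\tau_{A\cup B} > T\}$ does \emph{not} leave the law of $X_T$ within $2^{-(\log n)^2}$ of $\pi$ restricted to the complement — the conditioning can distort the law far more than the TV mixing error — which is why the paper works with the unconditional comparison $\prstart{\tau(A,T) < \tau(B,T)}{u} \approx \prstart{\tau_A<\tau_B}{\pi}$ rather than a conditioned restart. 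Your first-step decomposition for the second statement is fine once the first statement is established; that part is a legitimate (if slightly different) alternative to the paper's "same proof with a modified $\tau_B^+$."
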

\begin{proof}
We start by proving the first statement for a lazy random walk, since this is equivalent, and we denote such a lazy random walk by $X$. Throughout this proof, we will also use the following notation. For a set $C \subset G_n$ and some time $t \geq 0$ we write $\tau(C,t)$ for the first time $s$ strictly larger than $t$ such that $X_s \in C$. Furthermore, write $\tmix^+$ for $\log_2^2(n)\tmix$ so that by \eqref{eq:tvDecay} we have that for every $u\in G_n$,
\begin{equation*}
    	d_{\textrm{TV}}(p_{\tmix^+}(u,\cdot),\pi(\cdot)) \leq n^{-\log(2)\log(n)}.
\end{equation*}

Now let $u\in G_n \setminus A$. We start with a lower bound on $\prstart{\tau_A < \tau_B}{u}$. We have that
\begin{align}\label{eqn:return time LB comp}
\begin{split}
\prstart{\tau_A < \tau_B}{u} &\geq  \prstart{\tmix^+ < \tau_A < \tau_B}{u} \\&\geq \prstart{\tau(A,\tmix^+) < \tau(B,\tmix^+)}{u} - \prstart{\tau_{A\cup B} < \tmix^+ < \tau(A,\tmix^+) < \tau(B,\tmix^+)}{u} 
\end{split}
 \end{align}
 Note that by \eqref{eq:tvDecay}, the first term can be lower bounded by $\prstart{\tau_A < \tau_B}{\pi} - n^{-\log(2)\log(n)}$. For the second term, let us upper bound the probability of the event $\{\tau_{A\cup B} < \tmix^+ < \tau(A,\tmix) < \tau(B,\tmix^+)\}$. Using a union bound we obtain
 \begin{align*}
     &\prstart{\tau_{A\cup B} < \tmix^+ < \tau(A,\tmix^+) < \tau(B,\tmix^+)}{u} \\&\leq \prstart{\tau_{A\cup B} < \tmix^+ < \tau(A,\tmix^+) < 2\tmix^+}{u} + \prstart{\tau_{A\cup B} < \tmix^+ \pand \tau(A,2\tmix^+) < \tau(B,2\tmix^+)}{u}.
     \\&\leq \frac{|A|\tmix^+}{\delta n} 
     + \frac{(|A| + |B|)\tmix^+}{\delta n}\cdot(\prstart{\tau_A < \tau_B}{\pi} + n^{-\log(2)\log(n)}). 
 \end{align*}
Note that, by \cref{lem:cap union bound} and \cref{lem:hitting ratio precise} we have that
\begin{equation*}
    \frac{|A|\tplus}{\prstart{\tau_A<\tau_B}{\pi}\delta n} \leq \frac{|A|\tplus}{\delta n}\cdot \frac{2(|A|+|B|)}{\delta^2|A|} \leq \frac{2(|A|+|B|)\tplus}{\delta^3 n},
\end{equation*}
so that, by \cref{claim:mixing time expander}
\begin{equation}\label{eq: error on difference from pi}
    \frac{|A|\tplus}{\delta n} = \prstart{\tau_A < \tau_B}{\pi}\cdot O_{\gamma}(n^{2\neweps-1/2}\tplus) = \prstart{\tau_A < \tau_B}{\pi}\cdot o_{\gamma}(n^{3\neweps-1/2}\tplus) 
\end{equation}
Substituting everything back into \eqref{eqn:return time LB comp}, we therefore deduce that
\begin{equation*}
    \prstart{\tau_A < \tau_B}{u} \geq \prstart{\tau_A<\tau_B}{\pi}(1+o(n^{3\neweps-1/2}\tplus)).
\end{equation*}
For an upper bound on $\prstart{\tau_A < \tau_B}{u}$, we simply write \begin{equation*}
    \prstart{\tau_A < \tau_B}{u} \leq \prstart{\tau_A < \tmix^+}{u} + \prstart{\tmix^+ < \tau(A,\tmix^+) < \tau(B,\tmix^+)}{u}\leq \frac{|A|\tmix^+}{\delta n} + \prstart{\tau_A < \tau_B}{\pi} + n^{-\log(2)\log(n)}.
\end{equation*}
Using \eqref{eq: error on difference from pi} again we obtain that
\begin{equation*}
    \prstart{\tau_A<\tau_B}{u} = \prstart{\tau_A<\tau_B}{\pi}(1+o(n^{3\neweps-1/2}\tplus)).
\end{equation*}
For the second statement, it is again enough to prove it for the lazy random walk, replacing $\tau_B^+$ with the first hitting time of $B$ after making at least one non-lazy step using the exact same proof.
 \end{proof}

\subsection{Capacity}
Here we prove some similar properties for the capacity and closeness of a random walk.

In this section we can also introduce the sequence $(\alpha_n)_{n \geq 1}$ appearing in \cref{thm:main2}. Given the graph sequence $(G_n)_{n \geq 1}$, take $M=n^{\neweps}$ as defined at the start of \cref{sctn:RW estimates}, let $X$ be a random walk on $G_n$, and for each $n \geq 1$ set
\[
\alpha_n = \frac{n\estart{\Capp_M (X[0, n^{\neweps/2}))}{\pi}}{Mn^{\neweps/2}}.
\]

\begin{proposition}\label{prop:RW cap}
Take $\gamma>0$ and $\delta>0$ and let $(G_n)_{n \geq 1}$ be a dense sequence of $\gamma$-expanders, where each $G_n$ has $n$ vertices and minimal degree at least $\delta n$. Let $u \in G_n$ and let $(X_i)_{i \geq 0}$ denote a random walk on $G_n$ started at $u$. Take $M=n^{\neweps}$ as defined at the start of \cref{sctn:RW estimates}. Then for all sufficiently large $n$,
\[
\pr{\left|\Capp_M (X[0, M)) - \frac{\alpha_n M^2}{n}\right| \geq  \frac{\alpha_n M^2}{n}n^{-\neweps/16}} \leq \frac{2M^2}{\delta n}.
\]
\end{proposition}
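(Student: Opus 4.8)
The plan is to split the trajectory $X[0,M)$ into $K:=n^{\neweps/2}$ consecutive blocks each of length $L:=n^{\neweps/2}$, so that $M=KL=L^{2}$ (up to harmless rounding), and to write $B_\ell:=X[(\ell-1)L,\ell L)$ for the vertices visited during the $\ell$-th block, $S_\ell:=\Capp_M(B_\ell)$, and $\Sigma:=\sum_{\ell=1}^{K}S_\ell$. The definition of $\alpha_n$ reads exactly $\estart{\Capp_M(X[0,L))}{\pi}=\tfrac{\alpha_n ML}{n}=:W$, and since $M=KL$ this gives $KW=\tfrac{\alpha_n M^{2}}{n}$. By the Bonferroni inequalities, pointwise in the $X$-trajectory,
\[
\Sigma-\sum_{1\le \ell<m\le K}\close_M(B_\ell,B_m)\ \le\ \Capp_M(X[0,M))\ \le\ \Sigma ,
\]
and since $|B_\ell|\le L$, \cref{cor:closeness UB deter} bounds each $\close_M(B_\ell,B_m)$ by $\tfrac{2M^{2}L^{2}}{\delta^{2}n^{2}}$, so the total correction is $\binom{K}{2}\cdot O(n^{3\neweps-2}/\delta^{2})=O(n^{4\neweps-2}/\delta^{2})$, which is $o\big(\tfrac{\alpha_n M^{2}}{n}n^{-\neweps/16}\big)$ since $\neweps<1/32$ and $\alpha_n\ge 1$ (recall $1\le\alpha_n\le\delta^{-1}$, the upper bound being immediate from \cref{lem:cap union bound}). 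It therefore suffices to concentrate $\Sigma$ around $\tfrac{\alpha_n M^{2}}{n}$.

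The key estimate is that the expected $M$-capacity of a length-$L$ block does not depend on where the walk enters it: for every $v\in G_n$, if $X'$ denotes a walk started at $v$, then $\big|\estart{\Capp_M(X'[0,L))}{v}-W\big|\le\delta_1:=\tfrac{2M\tplus}{\delta n}+n^{-\log(2)\log n}$, where $\tplus:=\log_2^{2}(n)\tmix=O((\log n)^{3})$ by \cref{claim:mixing time expander}. This follows by coupling $X'$ with a stationary walk $X^{\pi}$ so that $X'_{\tplus}=X^{\pi}_{\tplus}$ — hence $X'[\tplus,\cdot)=X^{\pi}[\tplus,\cdot)$ — with probability $\ge 1-n^{-\log(2)\log n}$, using $d_{\mathrm{TV}}(p_{\tplus}(v,\cdot),\pi)\le n^{-\log(2)\log n}$ from \eqref{eq:tvDecay} (via the non-lazy/lazy comparison of the proof of \cref{claim:capacity different start point}): on that event the two length-$L$ trajectories differ only in their first $\tplus$ steps, and those $\le\tplus$ vertices contribute at most $\tfrac{M\tplus}{\delta n}$ to $\Capp_M(\cdot)$ by \cref{lem:cap union bound}, so the two capacities differ by at most $\tfrac{2M\tplus}{\delta n}$ there and by at most $1$ off it; taking expectations gives the bound. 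Writing $\mathcal F_j:=\sigma(X_0,\dots,X_{jL})$ and $h(v):=\estart{\Capp_M(X'[0,L))}{v}$, the Markov property gives $\econd{S_{j+1}}{\mathcal F_j}=h(X_{jL})\in[W-\delta_1,W+\delta_1]$, whence $\E{\Sigma}\in[K(W-\delta_1),K(W+\delta_1)]$, i.e.\ $\E{\Sigma}$ lies within $K\delta_1=O(n^{3\neweps/2-1}(\log n)^{3})=o\big(\tfrac{\alpha_n M^{2}}{n}n^{-\neweps/16}\big)$ of $\tfrac{\alpha_n M^{2}}{n}$.

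It remains to show $\Sigma$ concentrates around $\E{\Sigma}$. Let $b:=\tfrac{ML}{\delta n}$, a deterministic upper bound for each $S_\ell\le\tfrac{M|B_\ell|}{\delta n}$ by \cref{lem:cap union bound} (and $W\le b$ since $\alpha_n\le\delta^{-1}$, while $\delta_1=o(b)$). The Doob martingale $D_j:=\econd{\Sigma}{\mathcal F_j}-\E{\Sigma}$ has $D_0=0$, $D_K=\Sigma-\E{\Sigma}$, and increments
\[
|D_j-D_{j-1}|\ \le\ |S_j|+\big|\econd{S_j}{\mathcal F_{j-1}}\big|+\sum_{\ell>j}\big|\econd{S_\ell}{\mathcal F_j}-\econd{S_\ell}{\mathcal F_{j-1}}\big|\ \le\ 3b+2K\delta_1\ =\ O\big(b(\log n)^{3}\big),
\]
where the last sum is bounded using that in each summand both conditional expectations lie in the interval $[W-\delta_1,W+\delta_1]$ of length $2\delta_1$. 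Azuma--Hoeffding over $K=n^{\neweps/2}$ steps then gives, for some $c=c(\delta,\gamma,\neweps)>0$,
\[
\pr{\,|\Sigma-\E{\Sigma}|\ \ge\ \tfrac12\tfrac{\alpha_n M^{2}}{n}n^{-\neweps/16}\,}\ \le\ 2\exp\!\Big(-c\,n^{3\neweps/8}/(\log n)^{6}\Big),
\]
which for large $n$ is far below $\tfrac{2M^{2}}{\delta n}$. Off this event, $\Capp_M(X[0,M))$ is within $\tfrac12\tfrac{\alpha_n M^{2}}{n}n^{-\neweps/16}$ (the Azuma deviation) plus the two $o\big(\tfrac{\alpha_n M^{2}}{n}n^{-\neweps/16}\big)$ deterministic errors of $\tfrac{\alpha_n M^{2}}{n}$, hence within $\tfrac{\alpha_n M^{2}}{n}n^{-\neweps/16}$ of it for $n$ large; this proves the proposition.

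The main obstacle is the second step — decoupling the $M$-capacity of a block from the walk's entrance point — and it is precisely what pins the block length to $\sqrt{M}=n^{\neweps/2}$: the blocks must be long compared with $\tplus$ so that each has mixed and contributes conditional mean essentially $W$, yet short enough that the $\binom{K}{2}$ pairwise overlaps $\close_M(B_\ell,B_m)$ remain negligible. This balance is exactly what makes all three error terms simultaneously $o\big(\tfrac{\alpha_n M^{2}}{n}n^{-\neweps/16}\big)$, the second and third being driven respectively by $\tplus=O((\log n)^{3})$ and by the martingale-difference bound $O(b(\log n)^{3})$.
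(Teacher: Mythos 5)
Your overall approach is genuinely different from the paper's, and it works, modulo one gap. The paper inserts i.i.d.\ $\bin(\tplus,1/2)$ time-shifts $T_j$ so that each block begins at a nearly-stationary time, couples the shifted blocks with i.i.d.\ stationary segments, and applies Hoeffding to the resulting i.i.d.\ sum; you instead show that the conditional expectation of each block's capacity given the past is nearly state-independent (within $\delta_1$), and run a Doob martingale plus Azuma--Hoeffding. Your route avoids the explicit coupling with i.i.d.\ segments and the bookkeeping for the uncovered boundary pieces, at the cost of the martingale-difference estimate; both give bounds far below the target $2M^2/(\delta n)$.

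There is, however, a genuine gap in the Bonferroni step. You assert, \emph{pointwise in the $X$-trajectory}, that $\close_M(B_\ell,B_m)\le 2M^2L^2/(\delta^2 n^2)$ via \cref{cor:closeness UB deter}. That corollary assumes $U$ and $W$ are \emph{disjoint}: its proof splits $\{\tau_U<M,\tau_W<M\}$ into $\{\tau_U<\tau_W<M\}\cup\{\tau_W<\tau_U<M\}$, which fails if the sets meet. The blocks $B_\ell=X[(\ell-1)L,\ell L)$ are not deterministically disjoint — the walk can revisit vertices — and if $B_\ell\cap B_m\neq\emptyset$ then $\close_M(B_\ell,B_m)\ge\Capp_M(B_\ell\cap B_m)\gtrsim M/(\delta n)$, which dwarfs $M^2L^2/(\delta^2 n^2)$ (since $ML^2\ll n$). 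The repair is standard and fits your error budget: since $p_t(u,v)\le 1/(\delta n)$ for all $t\ge 1$, the walk has a self-intersection in $[0,M)$ with probability at most $M^2/(2\delta n)$, and you can simply absorb this into the final $2M^2/(\delta n)$. But as written the inequality is not pointwise. A smaller remark: the sentence ``couple $X'$ with $X^\pi$ so that $X'_{\tplus}=X^\pi_{\tplus}$'' is not literally available for the non-lazy walk on a near-bipartite graph; the correct statement (which you allude to via \cref{claim:capacity different start point}) is that after a random number $N\le\tplus$ of non-lazy steps, $X'_N$ is within TV distance $n^{-\log(2)\log n}$ of $\pi$, and the $\le\tplus$ burned-in vertices change $\Capp_M$ by at most $M\tplus/(\delta n)$. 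This gives the same $\delta_1$, so the conclusion is fine; just the phrasing is imprecise.
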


\begin{proof} The proof is a simplified version of that of \cite[Lemma 5.3]{PeresRevelleUSTCRT}. First recall from \cref{lem:hitting ratio fixp} that $\tmix^+=\log_2^2(n)\tmix$. Let $(T_j)_{j=1}^{n^{\neweps/2}}$ be a sequence of i.i.d random variables with distribution $\bin(\tmix^+, 1/2)$. Then, for all $1\leq j \leq n^{\neweps/2}$ let
\begin{equation*}
    B_j = [(j-1)n^{\neweps/2} + T_j, jn^{\neweps/2} - \tmix^+ + T_j].
\end{equation*}
Note that by \eqref{eq:tvDecay} we have that for all $j\leq n^{\neweps/2}$, given $X[0,jn^{\neweps/2}]$, the starting point of $B_{j+1}$ is nearly stationary. 
Also let $(\Xindj)_{j=1}^{n^{\neweps/2}}$ denote a sequence of independent random walk segments each of length $n^{\neweps/2} - \tplus$, and each started from stationarity. Note that, by \eqref{eq:tvDecay}, the segments $(X_{B_j})_{j=1}^{n^{\neweps/2}}$ can be coupled with the segments $(\Xindj)_{j=1}^{n^{\neweps/2}}$ so that the segments coincide for all $j \leq n^{\neweps/2}$ with probability at least
\begin{equation}\label{eqn:prob coupling}
1-n^{\neweps/2}n^{-\log_2(n)}.
\end{equation}

Note that the segments $(\Xindj)_j$ are i.i.d. and, by definition,
\begin{align}\label{eqn:cap segment error}
\E{\Capp_M (\Xindj)} &= \E{\Capp_M (\Xindj_{[0, n^{{\neweps}/{2}}) })} + O\left(\frac{M\tplus}{\delta n}\right) = \frac{\alpha_n M n^{{\neweps}/{2}}}{n} \left(1 + O\left(\frac{\tplus}{\delta n^{\neweps/2}}\right) \right).
\end{align}
Moreover, by a union bound, we also have the deterministic bound
\begin{align}\label{eqn:cap deterministic UB}
\Capp_M (\Xindj) \leq M \pi (\Xindj ) \leq \frac{M n^{\neweps/2}}{\delta n}. 
\end{align}
It therefore follows from a Hoeffding bound \cite[Theorem 1]{hoeffding1994probability} that there exist $C < \infty, c>0$ such that for any $t>0$,
\begin{align*}
\pr{\left|\sum_{j=1}^{n^{\neweps/2}} \Capp_M (\Xindj) - n^{\neweps/2}\E{\Capp_M(\Xindone)}\right| \geq \frac{\alpha_n M^2 \tplus}{2n^{1+\neweps/8}}} &\leq
2 \exp \left(-2n^{\neweps/2}\left(\frac{\frac{\alpha_n M^2 \tplus}{2n^{1+5\neweps/8}}}{\frac{Mn^{\neweps/2}}{\delta n}}\right)^{2} \right) 
\\&= 2 \exp \left(-n^{\neweps/4}(\tplus)^2\frac{\alpha_n^2\delta^2}{2}\right).
\end{align*}
In particular, since it follows from \eqref{eqn:cap segment error} that
\[
\left| n^{\neweps/2}\E{\Capp_M(\Xindone)} - \frac{\alpha_n M^2}{n} \right| \leq O \left( \frac{\alpha_n M n^{{\neweps}/{2}}}{n} \frac{\tplus}{\delta} \right) \ll \frac{\alpha_n M^2 \tplus}{n^{1+\neweps/8}},
\]
we deduce that
\begin{align}\label{eqn:Hoeffding}
    \pr{\left|\sum_{j=1}^{n^{\neweps/2}} \Capp_M (\Xindj) - \frac{\alpha_n M^2}{n}\right| \geq \frac{\alpha_n M^2 \tplus}{n^{1+\neweps/8}}} \leq 2 \exp \left(-n^{\neweps/4}(\tplus)^2\frac{\alpha_n^2\delta^2}{2}\right).
\end{align}
We would like to approximate the capacity of the whole segment $X[iM, (i+1)M)$ by the sum of the capacities of the smaller segments, but this is potentially a slight overestimate, since we are double-counting random walk trajectories that hit more than one smaller segment. To account for this, we use the concept of closeness defined in \cref{sctn:capacity defs}. For each $J \leq n^{\neweps/2}$, note that conditionally on $(\Xindj)_{j \leq J}$ all being disjoint, which happens with probability at least
\begin{equation}\label{eqn:prob disjoint}
    1 -  \frac{M^2}{\delta n},
\end{equation}
we have by \cref{cor:closeness UB deter} that
\begin{align*}
\close_M(\Xindjj, \cup_{j<J} (\Xindj)_j) \leq   \frac{2M^3n^{\neweps/2}}{\delta^2n^2} .
\end{align*}
Equally, approximating $\Capp_M (X[0, M))$ by $\sum_{j=1}^{n^{\frac{\neweps}{2}}} \Capp_M (X_{B_j})$ might be undercounting slightly, since there is also a contribution to the capacity from the set $X[0,M) \setminus \left(\cup_{j<n^{\neweps/2}} X_{B_j}\right)$.

We now combine the above estimates as follows. Note that, on the event $X_{B_j} = \Xindj$ for all $j \leq J$, we have (also using \eqref{eqn:cap deterministic UB} and a union bound) that for all sufficiently large $n$:
\begin{align}\label{eqn:subtract closeness}
\begin{split}
\left|\Capp_M (X[0, M)) - \sum_{j=1}^{n^{\frac{\neweps}{2}}} \Capp_M (\Xindj) \right| &\leq \sum_{J=1}^{n^{\neweps/2}} \close_M(X_{B_j}, \cup_{j<J} X_{B_j}) + \pr{\tau_{X[0, M) \setminus \left(\cup_{j<n^{\neweps/2}} X_{B_j} \right)} \leq M} \\
&\leq \frac{2M^4}{\delta^2n^{2}} + \frac{Mn^{\neweps/2}\tplus}{\delta n} \leq n^{-\neweps/8}\frac{\alpha_n M^2 \tplus}{n^{1+\neweps/8}}.
\end{split}
\end{align}
Therefore, combining with the estimates of \eqref{eqn:prob coupling}, \eqref{eqn:Hoeffding}, \eqref{eqn:prob disjoint} in a union bound, applying \eqref{eqn:subtract closeness} and using that $\tplus \ll n^{\neweps/32}$, we see that with probability at least $1 - \frac{2M^2}{\delta n}$ we have that
\begin{align*}
&\left|\Capp_M (X[0, M)) - \frac{\alpha_n M^2}{n} \right| \leq o \left(
\frac{\alpha_n M^2}{n} \cdot n^{-\neweps/16}\right). \qedhere
\end{align*}
\end{proof}

\section{Laplacian random walk representation and Wilson's algorithm}\label{sctn:Wilson analysis}

Throughout all of this section, we let $(G_n)_{n \geq 1}$ be a sequence of graphs satisfying the assumptions of \cref{thm:main2} with parameters $\delta>0$ and $\gamma>0$. By \cref{claim:mixing time expander}, this implies that $\tmix = O(\log n)$. For each $n, k \geq 1$, $\Tkkn$ will denote the tree obtained after running Wilson's algorithm on $G_n$ on the vertex set $(v_1, \ldots, v_{k-1})$. Given such a sequence $(G_n)_{n \geq 0}$, we set
\begin{equation}\label{eq: def of beta n}
\alpha_n = \frac{n\estart{\Capp_M (X[0, n^{\neweps/2}))}{\pi}}{Mn^{\neweps/2}}, \qquad \beta_n = \frac{1}{\sqrt{\alpha_n}},
\end{equation}
where $X$ is a random walk on $G_n$ and $\neweps$ is as defined at the start of \cref{sctn:RW estimates}. Lastly, if $A \subset G_n$, we will use the notation $\tau_A$ to denote the hitting time of $A$ for a random walk on $G_n$.

The goal of this section is to prove the forthcoming \cref{cor:main stick breaking ingredient} for such a sequence of graphs, for which we will need the following definition.

\begin{definition}\label{def:good tree}
We say that a subgraph $T \subset G_n$ is good if
\begin{enumerate}
\item $T$ is a tree.
    \item $|T| \leq n^{1/2+\neweps}$.
    \item For every open connected subset $A\subset T$ with $|A| \geq n^{3\neweps}$ we have that $|\Capp_M(A) - \alpha_n M |A| / n| \leq \frac{\alpha_n M|A|}{n}\cdot n^{-\neweps/16}$.
\end{enumerate}
\end{definition}

Note that $T^{(1)}_n$, the tree consisting of the first single vertex, is trivially good.

\begin{proposition}
\label{cor:main stick breaking ingredient}
Take any good subgraph $T \subset G_n$. Take any $u \in V(G_n) \setminus T$ and let $Y$ be a LERW started at $u$ and terminated when it hits $T$.
\begin{enumerate}[(1)]
    \item Take any $C \in (0, \infty)$ and any $B\in (0,\infty)$. Suppose additionally that $|T| \leq B\sqrt{n}$. Then
    \begin{align*}
        \prstart{Y[0, C \beta_ns\sqrt{n}] \cap T = \emptyset}{u} = \exp\left\{- \frac{(C + |T|/\beta_n \sqrt{n})^2-(|T|/\beta_n\sqrt{n})^2}{2} \right\} ( 1 + o_B(1)).
    \end{align*}
     \item Let $H_T$ be the time that $Y$ hits $T$. Then for any connected $A \subset T$ with $|A| \geq n^{3\neweps}$ and for all $n^{2\neweps}/M \leq i \leq n^{1/2+\neweps}/M$,
\begin{align*}
\prcond{Y_{H_{T}} \in A}{H_{T} \in [iM, (i+1)M)}{u} = \frac{|A|}{|T|}(1 + o(1)),
\end{align*}
where the $o(1)$ is uniform over all $n^{2\neweps}/M \leq i\leq n^{1/2+\neweps}/M$.
\item For any $k \geq 1$ and $B\in(0,\infty)$, if $|T|\leq B\sqrt{n}$ then
\begin{equation*}
    \prcond{\Tkn \text{is good}}{\Tkkn=T}{} = 1-o_B(1).
\end{equation*}
\end{enumerate}
\end{proposition}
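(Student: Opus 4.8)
The plan is to prove the three parts using the Laplacian random walk representation of Section \ref{sctn:Laplacian RW} together with the capacity estimates of Section \ref{sctn:RW estimates}. Throughout, write $H_T$ for the hitting time of $T$ by the LERW $Y$, and recall that since $T$ is good we have $|T| \leq n^{1/2+\neweps}$, so the sets appearing below all satisfy the size constraint $|A|+|B| \leq \tfrac{\delta^3}{2}n^{1/2+2\neweps}$ needed to invoke \cref{lem:hitting ratio precise} and \cref{lem:hitting ratio fixp}. The key mechanism is that, by the path-factorisation formula in Section \ref{sctn:Laplacian RW}, the conditional probability that the LERW takes a given next block of steps equals the corresponding random-walk probability times a correction factor $C(\cdot)$ built out of ratios $\prstart{\tau_T < \tau_{S}}{v}/\prstart{\tau_T < \tau_{S}^+}{v'}$, where $S$ is the current trace of $Y$; \cref{lem:hitting ratio fixp} says each such ratio is $1+o(n^{3\neweps - 1/2}\tplus)$ since $T$ is much more "massive'' (in capacity) than the thin LERW trace. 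So over any window of $O(n^{1/2+\neweps})$ steps the correction factor is $1+o(1)$, and the LERW looks like an unconditioned random walk.

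\emph{Part (1).} I would discretise the interval $[0, C\beta_n s\sqrt n]$ into blocks of length $M = n^{\neweps}$. On each block the LERW is, up to a $(1+o(1))$ multiplicative factor from the $C(\cdot)$ correction, a random walk; by \cref{prop:RW cap} (which applies because $T$ is good, so its capacity is $\alpha_n M |T|/n (1+o(1))$) the probability that a random-walk block of length $M$ started anywhere hits $T$ is $\Capp_M(T)(1+o(1)) = \alpha_n M |T|/n (1+o(1))$, and moreover these block-hitting events are approximately independent after mixing (using \eqref{eq:tvDecay} and \cref{claim:mixing time expander}, $\tmix = O(\log n) \ll M$). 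Writing $\ell = |T|/(\beta_n\sqrt n)$ and $t = C + s$-many blocks, the survival probability over $[0, C\beta_n s \sqrt n]$ multiplies as $\prod (1 - \alpha_n M|T|/n)$ over $\approx C\beta_n s \sqrt n / M$ blocks; since $\alpha_n = \beta_n^{-2}$, the exponent telescopes to $-\tfrac12\big((C+\ell)^2 - \ell^2\big)$ once one accounts for the fact that $|T|$ grows as the walk would be absorbed — but here $T$ is fixed, so actually the exponent is linear, $-C\ell$; the quadratic term comes instead from iterating Part (3) across the $k$ steps of Wilson's algorithm. Concretely I would set up the computation so that the survival over one stick of length $C\beta_n s\sqrt n$, started when $|T| \approx \ell \beta_n \sqrt n$, contributes $\exp\{-\tfrac12((C+\ell)^2 - \ell^2)\}(1+o_B(1))$, matching \cref{prop:CRT stick probs}; the $o_B(1)$ error must be uniform in $T$ subject to $|T| \leq B\sqrt n$, which is why $B$ enters.

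\emph{Part (2).} Conditionally on $H_T \in [iM, (i+1)M)$, the endpoint $Y_{H_T}$ is essentially the endpoint of a random-walk segment of length $\leq M$ conditioned to hit $T$ in that window, started from a near-stationary point (since $i \geq n^{2\neweps}/M \gg \tmix$). The probability that such a conditioned walk hits $T$ first at $A$ versus $T \setminus A$ is governed by \cref{lem:hitting ratio precise} applied with the pair $(A, T\setminus A)$: it equals $\Capp_M(A)/(\Capp_M(A) + \Capp_M(T\setminus A))(1+o(1))$, and since $T$ is good and $|A| \geq n^{3\neweps}$, both capacities are $\alpha_n M |\cdot|/n (1+o(1))$, giving the ratio $|A|/|T|(1+o(1))$. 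The $C(\cdot)$ correction from the Laplacian representation is again $1+o(1)$ by \cref{lem:hitting ratio fixp}, and the upper bound $i \leq n^{1/2+\neweps}/M$ keeps the relevant error terms uniform.

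\emph{Part (3).} This is the main obstacle and the heart of the induction. Starting from $\Tkkn = T$ good, run a LERW $Y$ from $v_k$ to $T$ and set $\Tkn = T \cup \LE(Y)$. Conditions (1) and (2) of \cref{def:good tree} are easy: $\Tkn$ is a tree by construction, and $|\Tkn| \leq n^{1/2+\neweps}$ with probability $1-o_B(1)$ follows from Part (1) (the LERW survival probability decays fast enough that it reaches length $n^{1/2+\neweps}$, hence a loop-erasure of comparable length, only with probability $o_B(1)$ — note $|T| \leq B\sqrt n$ is exactly what makes Part (1) applicable here). The real work is condition (3): for every open connected $A \subset \Tkn$ with $|A| \geq n^{3\neweps}$ we need $|\Capp_M(A) - \alpha_n M|A|/n| \leq \alpha_n M |A|/n \cdot n^{-\neweps/16}$. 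Such an $A$ decomposes into a part $A \cap T$ (handled by goodness of $T$, provided $|A\cap T| \geq n^{3\neweps}$) and a part $A \cap \LE(Y)$ lying along the new branch. For the new branch I would cover $\LE(Y)$ by overlapping sub-segments of length $n^{\neweps/2}$, use the Laplacian representation to say each is within $1+o(1)$ of being an unconditioned random-walk segment, apply \cref{prop:RW cap} to each (which controls $\Capp_M$ of a random-walk segment of length $M$, and by extension, after summing, of a segment of length up to $n^{1/2+\neweps}$), and then subtract off the double-counted closeness contributions using \cref{cor:closeness UB deter} exactly as in the proof of \cref{prop:RW cap}. The delicate points are: (i) a union bound over all $O(n)$ choices of connected $A$ — one needs the per-$A$ failure probability to beat $n^{-1}$, which forces the segment-length and exponent bookkeeping with the $n^{\neweps/16}$ and $n^{\neweps/8}$ slack; (ii) segments of $Y$ that get partially erased under $\LE$ — here I would argue that loop-erasure only removes vertices, so $\Capp_M(\LE(Y)) \leq \Capp_M(Y)$, and a matching lower bound follows since the erased portions have negligible capacity (their total length is $o(n^{3\neweps})$ with high probability, by the standard fact that LERW on a well-mixing graph erases only a vanishing fraction); and (iii) joining the new-branch estimate to the old-tree estimate across the attachment point, which is handled by additivity of $\Capp_M$ up to a closeness error bounded by \cref{cor:closeness UB deter}. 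Assembling these with a union bound over $A$ and over the $O(n^{1/2})$ blocks gives the claim with an $o_B(1)$ total error.
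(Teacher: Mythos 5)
There is a genuine gap, and it lies at the conceptual heart of Part (1). You write that the block-survival probability multiplies as $\prod\bigl(1-\alpha_n M|T|/n\bigr)$, correctly observe that since $T$ is fixed this produces only the linear exponent $-C\ell$, and then conclude that ``the quadratic term comes instead from iterating Part (3) across the $k$ steps of Wilson's algorithm.'' That attribution is wrong and leaves Part (1) unproved. The quadratic term $C^2/2$ arises \emph{within a single LERW step}, not from iterating across branches: the $i$-th block of $Y$ must avoid not just $T$ but also the accumulated LERW trace $Y[0,iM)$, whose $M$-capacity grows like $\alpha_n iM^2/n$. The conditional survival probability of the $i$-th block is therefore $1 - \bigl(\alpha_n M|T|/n + \alpha_n iM^2/n\bigr)(1+o(1))$, and the second term, which is linear in $i$, produces the $C^2/2$ on summation. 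This mirrors the CRT: in \cref{prop:CRT stick probs} the exponent $-\tfrac12\bigl((Y_k+x)^2-Y_k^2\bigr) = -Y_k x - x^2/2$ has the $x^2/2$ coming from the $t\,dt$ Poisson intensity, i.e.\ from the stick's own growth. In the discrete picture this self-avoidance contribution is exactly what the Laplacian correction factor of \cref{lem:hitting lem LB rough same event}(b) records: at the step where the walk enters $T$ the factor $C(\cdot)$ is not $1+o(1)$ but $\bigl(\Capp_M(T)+\Capp_M(Y[0,iM))\bigr)/\Capp_M(T)$, which is why \cref{cor:hit A in next interval} has the two-term bracket. Your sketch never tracks the capacity of the LERW's own trace (the role of the events $E_{C,i}$ supplied by \cref{cor:capacity tail}), so the computation simply cannot produce the correct exponent.

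A secondary remark: for Parts (2) and (3) the spirit is roughly right, but Part (3) as you describe it (a union bound over all $O(n)$ connected $A$, plus explicit handling of loop-erased portions) is considerably heavier than what the paper does: the paper deduces Part (3) from the same block-by-block capacity events $E_{C,i}$ via \cref{cor:capacity tail} combined with Part (1) to bound the number of blocks. Since your Part (1) is broken and Part (3) leans on it, the whole chain needs the Part (1) fix before it can be assessed further.
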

\cref{cor:main stick breaking ingredient} will allow us to compare Wilson's algorithm with the CRT stick-breaking process in \cref{sctn:Wilson is stickbreaking} and prove \cref{thm:main2}.

\subsection{Comparison of path probabilities}

We will use the Laplacian random walk representation of LERW outlined in \cref{sctn:Laplacian RW} to compare the probability of different LERW trajectories. We first fix some $n \geq 1$ and we run Wilson's algorithm on $G_n$. Let $\{v_1, \ldots, v_n\}$ denote the ordering of the vertices for this process.  

Now fix some $k \geq 2$, suppose we have run $k-2$ steps of Wilson's algorithm to form a tree spanned by the vertices $(v_1, \ldots, v_{k-1})$, which we denote by $\Tkkn$. Let $X$ denote a random walk on $G_n$, killed when it hits $\Tkkn$, and let $(Y_m)_{m \geq 0}$ denote its loop erasure. Recall from \cref{sctn:Laplacian RW} that, if $u_0, u_1, \ldots, u_H$ is a simple path in $G_n$, where $\{u_0, u_1, \ldots, u_{H-1}\}$ is disjoint from $\bigcup_{m=0}^{L} \{Y_m\} \cup \Tkkn$ and $u_0 = Y_L$, then
\begin{align}\label{eqn:LERW path factorisation}
\begin{split}
\prcond{ (Y_m)_{m=L+1}^{L+H} = (u_m)_{m=1}^{H}}{(Y_m)_{m=0}^{L}}{} &= \prstart{ (X_m)_{m=1}^{H} = (u_m)_{m=1}^{H}}{u_0} C((Y_m)_{m=0}^{L}, \Tkkn, (u_m)_{m=1}^{H})),
\end{split}
\end{align}
where 
\begin{equation}\label{eqn:C constant expression}
C((Y_m)_{m=0}^{L}, \Tkkn, (u_m)_{m=1}^{H}) = \prod_{h=1}^{H} \frac{\prstart{\tau_{\Tkkn} < \tau_{\{Y_m\}_{m=0}^{L} \cup \ \{u_m\}_{m=0}^{h-1}   }}{u_h}}{\prstart{\tau_{\Tkkn} < \tau^+_{ \{Y_m\}_{m=0}^{L} \cup \  \{u_m\}_{m=0}^{h-1}}}{u_{h-1}}}.
\end{equation} 

At some points in this section, we will condition on an event of the form $\Tkkn=T$ and $X$ and $Y$ will respectively denote a random walk and a LERW, both killed when they hit $T$. For notational convenience, if $H_T$ is the time that $Y$ hits $T$, we set $Y_m = Y_{H_T}$ for all $m \geq H_T$.

\begin{remark}
To prove \cref{thm:pr04}, we should choose the vertices $\{v_1, \ldots, v_k\}$ uniformly on $G_n$. In fact we will prove a result that holds for any choice of distinct $\{v_1, \ldots, v_k\}$.
\end{remark}

The strategy to prove \cref{cor:main stick breaking ingredient} will be roughly as follows. Firstly, \cref{lem:dont hit too soon} enables us to control the behavior of a first small segment of $Y$. This will enable us to give tight estimates for the constant $C$ defined by \eqref{eqn:C constant expression}, which we do in \cref{lem:hitting lem LB rough same event}. In \cref{cor:capacity tail} we combine this with \cref{prop:RW cap} and \eqref{eqn:LERW path factorisation} to tightly control the capacity of LERW segments. In \cref{lem:hitting without intersections} we estimate the random walk hitting measure on a good tree $T$, and in \cref{cor:hit A in next interval} we combine this with the estimates on the constant $C$ to plug into \eqref{eqn:LERW path factorisation} and obtain analogous estimates for LERW.

\begin{lemma}\label{lem:dont hit too soon}
There exists $N<\infty$ such that for all $n \geq N$ the following holds. Let $T \subset G_n$ be a subgraph with $|T| \leq n^{\frac{1}{2}+\neweps}$. Take any $u \in V(G_n) \setminus T$ and let $Y$ be a LERW started at $u$ and terminated when it hits $T$. Then
\[
\prstart{|Y| \leq n^{2\neweps}}{u} \leq \frac{4n^{2\neweps}(n^{2\neweps} + |T|)}{\delta^3 n}.
\]
\end{lemma}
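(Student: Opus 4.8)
The plan is to bound the probability that the loop-erased walk $Y$ has length at most $n^{2\neweps}$ by a corresponding event for the underlying random walk $X$. The key observation is that if $|Y| \leq n^{2\neweps}$, then in particular the loop-erasure of $X$ up to the hitting time $\tau_T$ produces at most $n^{2\neweps}$ distinct vertices; since loop-erasure only removes vertices, this forces $X$ to hit $T$ having visited at most $n^{2\neweps}$ distinct sites, and in particular $\tau_T$ is the hitting time of $T$. So the event $\{|Y| \leq n^{2\neweps}\}$ is contained in the event that $X$, started from $u$, hits $T$ before the $(n^{2\neweps}+1)$-th \emph{new} vertex is discovered. Equivalently, writing $V = \{X_0, X_1, \dots, X_{\tau_T}\}$ for the range of $X$ up to hitting $T$, we need $|V \setminus T| \leq n^{2\neweps}$.

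First I would make this precise: let $\sigma$ be the first time at which $X$ has visited $n^{2\neweps}$ distinct vertices outside $T$ (set $\sigma = \infty$ if this never happens before $\tau_T$). On $\{|Y| \le n^{2\neweps}\}$ we have $\tau_T \le$ (the time at which $Y$ reaches its final length, which is at most the time $X$ has discovered $n^{2\neweps}$ new vertices), so $\{|Y| \le n^{2\neweps}\} \subseteq \{\tau_T \le \sigma\}$, i.e. the range of $X$ prior to hitting $T$ lies in a set $S$ with $|S| \le n^{2\neweps} + |T|$. Then I would condition on this random range being some fixed set $S$ of size at most $n^{2\neweps}+|T|$ and estimate, for the random walk restricted to stay within $S$ until it hits $T$, the probability of hitting $T$ at all within that budget. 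The cleanest way is: on $\{|Y|\le n^{2\neweps}\}$ the walk $X$ must hit $T$ within its first $\sigma$ steps; but a better route is to bound $\prstart{|Y|\le n^{2\neweps}}{u}$ by $\prstart{X \text{ visits at most } n^{2\neweps} \text{ vertices outside } T \text{ before } \tau_T}{u}$ and then to directly estimate that the walk, after leaving $u$, has hitting probability of $T$ at most something like $|T|/(\delta n)$ per ``fresh excursion,'' using the capacity bound of \cref{lem:cap union bound} or \cref{claim:capacity different start point}.

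A more concrete implementation: run $X$ for $n^{2\neweps}$ steps and check whether it has hit $T$. We have $\prstart{|Y| \le n^{2\neweps}}{u} \le \prstart{\tau_T \le n^{2\neweps} \cdot n^{2\neweps}}{u}$? No — the number of \emph{steps} needed to discover $n^{2\neweps}$ new vertices is not controlled. Instead I would argue as follows: if $|Y| \le n^{2\neweps}$ then the set $R$ of vertices visited by $X$ before $\tau_T$ satisfies $|R| \le n^{2\neweps} + |T|$ (since $Y$ consists of the non-erased vertices and everything erased was on a loop, hence revisited — a careful accounting shows $|R \setminus T|$ equals $|Y|-1$ plus the total loop length... actually loops can be long, so this needs care). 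The honest statement is: the number of \emph{distinct} vertices in $R\setminus T$ is exactly $|Y| - 1 \le n^{2\neweps}$, because every vertex of $X$ before $\tau_T$ is either kept in the loop erasure or was later revisited, and the final loop erasure has exactly $|Y|$ vertices. Hence on $\{|Y|\le n^{2\neweps}\}$, $X$ hits $T$ while confined to a set of at most $n^{2\neweps}+|T|$ vertices outside $T$. Now I would use that each time $X$ enters a fresh vertex and then must eventually hit $T$, and union-bound over the (at most $n^{2\neweps}$) fresh vertices together with a bound of roughly $2|T|/(\delta n)$ for the probability that a walk started from any fixed vertex hits $T$ before... hmm, this is where it gets delicate.

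\textbf{The main obstacle} I anticipate is making the ``confined to a small set'' heuristic into a clean union bound with the right constant. The cleanest rigorous route is probably: $\{|Y| \le n^{2\neweps}\}$ implies that among the first $n^{2\neweps}$ distinct vertices visited by $X$ (outside $T$), one of the ``excursions'' hits $T$; equivalently, letting $\rho_1 < \rho_2 < \cdots$ be the successive times $X$ visits a new vertex outside $T$, we need $\tau_T < \rho_{n^{2\neweps}+1}$. By the strong Markov property applied at each $\rho_j$ and a union bound, $\prstart{|Y| \le n^{2\neweps}}{u} \le \sum_{j=1}^{n^{2\neweps}} \sup_{v \notin T} \prstart{X \text{ returns to the already-visited set} \cup T \text{ at a vertex of } T}{v}$, and one bounds each summand by roughly $\frac{2(n^{2\neweps}+|T|)}{\delta n}$ using $\deg \ge \delta n$ and the fact that from any vertex the one-step probability of landing in a set $S$ is at most $|S|/(\delta n)$ — but ``returns to visited set at a vertex of $T$'' over an unbounded time horizon needs the walk to actually return, which it does a.s., and the first return is to a uniformly-chosen-by-weight neighbour... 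This is not quite a one-step bound. I think the correct clean bound, matching the factor $4 n^{2\neweps}(n^{2\neweps}+|T|)/(\delta^3 n)$ in the statement (note the $\delta^3$), comes from combining \cref{lem:cap union bound}'s lower bound on $\Capp$ with a comparison to stationary start, so I would: (i) show $\{|Y|\le n^{2\neweps}\} \subseteq \{$the range $R$ of $X$ before $\tau_T$ has $|R \setminus T| \le n^{2\neweps}\}$; (ii) for each such $R$, the walk's excursions between fresh vertices each have probability $\le \frac{|T|}{\delta n} \cdot (1+o(1))$ of ending at $T$ rather than at a previously-visited vertex, by \cref{claim:capacity different start point}-type estimates with $A = T$; (iii) union bound over $\le n^{2\neweps}$ excursions, and (iv) absorb the $\delta^{-3}$ and the factor $4$ from the error terms in the capacity comparison. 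The main work is step (ii): getting a clean per-excursion bound without assuming the total time is small. I would resolve this by noting that a random walk from any $v \notin T \cup R'$ (where $R'$ is the set of fresh vertices so far) either hits $T$ or hits $R'$ first, and $\prstart{\tau_T < \tau_{R'}}{v} \le \prstart{\tau_T < \infty}{v}$ is not small — so instead one should bound the probability that the walk hits $T$ \emph{before} discovering its next fresh vertex, for which the relevant comparison is one-step: the probability that the current step lands in $T$ is at most $|T|/(\delta n)$, and summing over all $\le n^{2\neweps}$ new-vertex epochs gives the claimed order, with the extra $\delta^{-2}$ and constant $4$ coming from bounding the number of steps between fresh vertices via the return-time estimates already established.
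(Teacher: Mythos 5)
Your proposal contains a genuine gap that I don't think can be patched within the framework you're proposing. The "honest statement" you land on --- that the number of distinct vertices visited by $X$ before $\tau_T$ (outside $T$) equals $|Y|-1$ --- is false, and false in the direction you need. Consider $X$ running $a\to b\to c\to b\to d\to T$: here $c$ is visited once, never revisited, yet it is erased (the loop $b\to c\to b$ removes it when the loop-erasure returns to $b$). So the range can strictly exceed $|Y|-1$. More to the point, the inclusion you would need is $\{|Y|\le n^{2\neweps}\}\subseteq\{|R\setminus T|\le n^{2\neweps}\}$, but a short loop-erasure can arise from a walk with a large range (the erased loops can collectively cover many fresh vertices), so this containment fails. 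The second half of your proposal (fresh-excursion union bound) is tracking the right kind of quantity, but you correctly notice you don't have a clean per-excursion bound over an unbounded time horizon, and the attempted one-step reduction ("the probability that the current step lands in $T$ is at most $|T|/(\delta n)$") does not by itself control the excursion, since the excursion between fresh vertices can take many steps, each with its own chance to enter $T$.

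The paper avoids the range issue entirely by never passing through the underlying walk's range. It works directly on the loop-erasure via the Laplacian random walk representation (Section~\ref{sctn:Laplacian RW}): for a fixed simple path $\varphi=(Y_m)_{m=0}^K$ disjoint from $T$ with $K<n^{2\neweps}$, and for $v\in T$,
\begin{equation*}
\prcond{Y_{K+1}=v}{(Y_m)_{m=0}^K=\varphi}{u}
=\frac{\prstart{X_1=v}{Y_K}\,\prstart{\tau_T<\tau_\varphi}{v}}{\prstart{\tau_T<\tau_\varphi^+}{Y_K}}
\le \frac{1}{\delta n}\cdot\frac{1}{\prstart{\tau_T<\tau_\varphi^+}{Y_K}}.
\end{equation*}
The denominator is then bounded below using \cref{lem:hitting ratio precise}, \cref{lem:hitting ratio fixp} and the capacity two-sided estimate of \cref{lem:cap union bound}: since $|\varphi|\le n^{2\neweps}$ and $|T|\le n^{1/2+\neweps}$, one has $\prstart{\tau_T<\tau_\varphi^+}{Y_K}\gtrsim\frac{\Capp_M(T)}{\Capp_M(T)+\Capp_M(\varphi)}\ge\frac{\delta^2|T|}{2(n^{2\neweps}+|T|)}$. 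Summing over $v\in T$ kills the $|T|$ in the denominator, giving $\prcond{Y_{K+1}\in T}{(Y_m)_{m=0}^K=\varphi}{u}\le\frac{4(n^{2\neweps}+|T|)}{\delta^3 n}$, and a union bound over $K<n^{2\neweps}$ finishes. This is the clean realization of your intuition: the Laplacian formula supplies exactly the "per-step hitting probability for the loop-erasure" that you were trying to extract from excursions of $X$, and it is uniform in $K$ because the denominator only ever involves the already-constructed loop-erasure path, not the full walk. To repair your argument you would need to replace the range-of-$X$ reduction with this (or an equivalent) conditional LERW increment bound.
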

\begin{proof}
For each $K < n^{2\neweps}$, we will use the Laplacian random walk representation to bound the probability that $\pr{|Y| = K+1 \mid |Y| > K}$. To this end, we have by \eqref{eqn:LERW path factorisation} for every $v \in T$ and every simple path $\varphi$ of length $K$ with $\varphi \cap T = \emptyset$ that
\begin{equation*}
    \prcond{Y_{K+1} = v}{(Y_m)_{m=0}^K = \varphi}{u} \leq \frac{1}{\delta n} \cdot \frac{1}{
    \prstart{\tau_{T} < \tau^+_{\varphi}}{Y_K}{}
    }.
\end{equation*}
By \cref{lem:hitting ratio precise} and \cref{lem:hitting ratio fixp}, the second term can be bounded by estimating the capacities of $T$ and of $\varphi$. We claim that up to some constants depending on the minimal degree, they both can be estimated by their sizes. Indeed, for any set of $A$ size less than $n^{1/2+\neweps}$, we have that $\frac{\delta M |A|}{2n} \leq \Capp_M(A) \leq \frac{|A|M}{\delta n}$ when $n$ is large enough by \cref{lem:cap union bound}.

Therefore, as $\varphi$ is of size $K < n^{2\neweps} < n^{1/2+\neweps}$ and $T$ is of size at most $n^{1/2+\neweps}$ we have that
\begin{equation*}
    \frac{\Capp_M(T)}{\Capp_M(T) + \Capp_M(\varphi)} \geq \frac{\delta^2|T|}{2(n^{2\neweps} + |T|)}.
\end{equation*}
Therefore, by \cref{lem:hitting ratio precise}, \cref{lem:hitting ratio fixp} and summing over all $v \in T$ we have for all sufficiently large $n$ that
\begin{equation*}
    \prcond{Y_{K+1} \in T}{(Y_m)_{m=0}^K = \varphi}{u} \leq \frac{4(n^{2\neweps} + |T|)}{\delta^3 n}.
\end{equation*}
By a union bound, we can thus conclude that $\prstart{|Y| \leq n^{2\neweps}}{u} \leq \frac{4n^{2\neweps}(n^{2\neweps} + |T|)}{\delta^3 n}$ as required.
\end{proof}

We will also need the following result to control the constant $C$ defined by \eqref{eqn:C constant expression}. The reader should have in mind that we will eventually apply the result with $T=\Tkkn$.

\begin{lemma}\label{lem:hitting lem LB rough same event}
Take $i \leq \frac{n^{1/2+\neweps}}{M}$. Then, for all simple paths $\{Y_m\}_{m=0}^{iM}$, all simple paths $\{u_m\}_{m=0}^H$ such that $u_0=Y_{iM}$ and $H \leq M$, and all connected subgraphs $T \subset G_n$ such that $\{u_m\}_{m=1}^{H-1}, \{Y_m\}_{m=0}^{iM}$ and $T$ are disjoint and such that $|T| \leq n^{1/2+\neweps}$ we have the following
\begin{enumerate}[(a)]
    \item If $u_H \notin T$, then
\begin{equation*}
\left|C((Y_m)_{m=0}^{iM}, T, (u_m)_{m=1}^{H}) - 1 \right| = o(n^{5\neweps-1/2}).
\end{equation*}
\item
If $u_H \in T$ and $i \geq n^{2\neweps}/M$, then
\begin{equation*}
    C((Y_m)_{m=0}^{iM}, T, (u_m)_{m=1}^{H}) = \frac{[\Capp_M(T) + \Capp((Y_m)_{m=0}^{iM})](1+o(1))}{\Capp_M(T)}.
\end{equation*}
\end{enumerate}
\end{lemma}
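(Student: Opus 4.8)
The plan is to estimate each of the $H$ factors in the product \eqref{eqn:C constant expression} defining $C$ separately, and then multiply them together. Fix $h\in\{1,\dots,H\}$ and set $B_h=\{Y_m\}_{m=0}^{iM}\cup\{u_m\}_{m=0}^{h-1}$. Since $iM\le n^{1/2+\neweps}$ and $h\le H\le M=n^{\neweps}$, the set $B_h$ has size at most $2n^{1/2+\neweps}$, so $|T|+|B_h|\le 3n^{1/2+\neweps}\le\tfrac{\delta^3}{2}n^{1/2+2\neweps}$ once $n$ is large, and $A=T$, $B=B_h$ meet the hypotheses of \cref{lem:hitting ratio precise} and \cref{lem:hitting ratio fixp}. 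The disjointness assumptions (together with $\{u_m\}$ being a simple path, and, in case (a), the fact that we may take $u_H\notin\{Y_m\}_{m=0}^{iM}$, as otherwise $C=0$ and the bound is only used when $C$ is a genuine LERW transition weight) give $u_h\notin T\cup B_h$ for $h<H$, and also for $h=H$ in case (a), while $u_{h-1}\notin T$ for every $h$ (note $u_{h-1}\in B_h$, so $\tau^+_{B_h}$ is the relevant return time in the denominator). Hence \cref{lem:hitting ratio fixp} shows that both the numerator $\prstart{\tau_T<\tau_{B_h}}{u_h}$ and the denominator $\prstart{\tau_T<\tau_{B_h}^+}{u_{h-1}}$ of the $h$-th factor equal $\prstart{\tau_T<\tau_{B_h}}{\pi}\bigl(1+o(n^{3\neweps-1/2}\tplus)\bigr)$, so the $h$-th factor is $1+o(n^{3\neweps-1/2}\tplus)$.

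For part (a), since $\tplus=O(\log^3 n)$ by \cref{claim:mixing time expander}, multiplying the $H\le n^{\neweps}$ factors gives $C=1+o(n^{4\neweps-1/2}\tplus)=1+o(n^{5\neweps-1/2})$, as claimed. For part (b), the factors with $h<H$ still contribute $1+o(n^{5\neweps-1/2})$, so only the $h=H$ factor is new. Because $u_H\in T$, a walk from $u_H$ has $\tau_T=0<\tau_{B_H}$, so the numerator of this factor is $1$; the denominator is again $\prstart{\tau_T<\tau_{B_H}^+}{u_{H-1}}=\prstart{\tau_T<\tau_{B_H}}{\pi}(1+o(1))$ by \cref{lem:hitting ratio fixp}. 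By \cref{lem:hitting ratio precise}, $\prstart{\tau_T<\tau_{B_H}}{\pi}=\frac{\Capp_M(T)}{\Capp_M(T)+\Capp_M(B_H)}(1+o(1))$. Finally I would replace $\Capp_M(B_H)$ by $\Capp_M(\{Y_m\}_{m=0}^{iM})$: since $\Capp_M(\{Y_m\}_{m=0}^{iM})\le\Capp_M(B_H)\le\Capp_M(\{Y_m\}_{m=0}^{iM})+\Capp_M(\{u_m\}_{m=0}^{H-1})$, and the bounds of \cref{lem:cap union bound} give $\Capp_M(\{u_m\}_{m=0}^{H-1})\le\tfrac{M^2}{\delta n}$ while $\Capp_M(\{Y_m\}_{m=0}^{iM})\ge\tfrac{\delta M n^{2\neweps}}{2n}$ (using $iM\ge n^{2\neweps}$, valid since $i\ge n^{2\neweps}/M$), this substitution costs only a factor $1+O(M^{-1})=1+o(1)$. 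Inverting and recombining the $1+o(1)$ errors yields the $h=H$ factor equal to $\frac{\Capp_M(T)+\Capp_M(\{Y_m\}_{m=0}^{iM})}{\Capp_M(T)}(1+o(1))$, and multiplying in the $1+o(1)$ from the other $h$ gives the statement.

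The argument is essentially an assembly of the earlier estimates, and I do not foresee a real obstacle; the only care needed is bookkeeping — verifying the size constraint of \cref{lem:hitting ratio precise} and \cref{lem:hitting ratio fixp} for every $B_h$ (which holds with room to spare), checking that the $o(\cdot)$ error in \cref{lem:hitting ratio fixp} is uniform over admissible sets so that it survives being raised to the power $H\le n^{\neweps}$, and ensuring the cascade of $1+o(1)$ factors recombines without amplification — for instance that $\Capp_M(B_H)$ may be substituted inside $\Capp_M(T)+\Capp_M(B_H)$, which is fine because $\Capp_M(\{Y_m\}_{m=0}^{iM})\le\Capp_M(T)+\Capp_M(\{Y_m\}_{m=0}^{iM})$.
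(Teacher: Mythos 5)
Your proof is correct and follows essentially the same route as the paper: estimate each factor of the product \eqref{eqn:C constant expression} via \cref{lem:hitting ratio fixp}, multiply over $H\le M$ factors, and in case (b) isolate the $h=H$ factor and apply \cref{lem:hitting ratio precise} together with the capacity comparison from \cref{lem:cap union bound} to swap $\Capp_M(B_H)$ for $\Capp_M(\{Y_m\}_{m=0}^{iM})$. The only place you go slightly further is in explicitly noting that for (a) one may assume $u_H\notin\{Y_m\}_{m=0}^{iM}$ (else $C=0$) — a detail the paper leaves implicit.
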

\begin{proof}
Fix some $1\leq h \leq H$. In case (a), in order to bound a term appearing in the product in \eqref{eqn:C constant expression}, we would like to compare the probabilities
\begin{equation}\label{eq:compare two probs}
    \prstart{\tau_{T} < \tau_{\{Y_m\}_{m=0}^{iM} \cup \ \{u_m\}_{m=0}^{h-1}}}{u_h} \ \pand \ \prstart{\tau_{T} < \tau^+_{\{Y_m\}_{m=0}^{iM} \cup \ \{u_m\}_{m=0}^{h-1} }}{u_{h-1}}.
\end{equation}
By \cref{lem:hitting ratio fixp} and by the triangle inequality, we have that
\begin{align*}
    &\left|\prstart{\tau_{T} < \tau_{\{Y_m\}_{m=0}^{iM} \cup \ \{u_m\}_{m=0}^{h-1}}}{u_h} - \prstart{\tau_{T} < \tau^+_{\{Y_m\}_{m=0}^{iM} \cup \ \{u_m\}_{m=0}^{h-1}}}{u_{h-1}}\right| \\&\leq \prstart{\tau_{T} < \tau_{\{Y_m\}_{m=0}^{iM} \cup \ \{u_m\}_{m=0}^{h-1}}}{u_h}(o(n^{3\neweps-1/2}\tplus)).
\end{align*}
In other words,
\begin{equation*}
    \frac{\prstart{\tau_{T} < \tau_{\{Y_m\}_{m=0}^{iM} \cup \ \{u_m\}_{m=0}^{h-1}}}{u_h}}{\prstart{\tau_{T} < \tau^+_{\{Y_m\}_{m=0}^{iM} \cup \ \{u_m\}_{m=0}^{h-1}}}{u_{h-1}}} = 1+o(n^{3\neweps-1/2}\tplus).
\end{equation*}
Hence we have that the product in \eqref{eqn:C constant expression} is bounded by
\begin{equation*}
    (1+o(n^{3\neweps-1/2}\tplus))^{n^{\neweps}} = 1 + o(n^{4\neweps - 1/2}\tplus).
\end{equation*}
To conclude, we use that $\tmix = o(n^{\neweps})$ as $n \to \infty$ by \cref{claim:mixing time expander}.

For (b), if $u_H \in T$, then $C((Y_m)_{m=0}^{iM}, T, (u_m)_{m=1}^{H})$ is instead equal to
\[
\frac{1}{\prstart{\tau_{T} < \tau^+_{\{Y_m\}_{m=0}^{iM} \cup \ \{u_m\}_{m=0}^{h-1}}}{u_{H-1}}} \prod_{h=1}^{H-1} \frac{\prstart{\tau_{T} < \tau_{\{Y_m\}_{m=0}^{iM} \cup \ \{u_m\}_{m=0}^{h-1}}}{u_h}}{\prstart{\tau_{T} < \tau^+_{\{Y_m\}_{m=0}^{iM} \cup \ \{u_m\}_{m=0}^{h-1}}}{u_{h-1}}},
\]
where the product is equal to $C((Y_m)_{m=0}^{iM}, T, (u_m)_{m=1}^{H-1})$ and is therefore $1+o(1)$ by (a). Then note that by \cref{lem:hitting ratio precise} and \cref{lem:hitting ratio fixp}, we have that 
\begin{align*}
\prstart{\tau_{T} < \tau^+_{\{Y_m\}_{m=0}^{iM} \cup \ \{u_m\}_{m=0}^{h-1}}}{u_{H-1}} &= \frac{\Capp_M(T)(1+o(1))}{\Capp_M(T) + \Capp_M((Y_m)_{m=0}^{iM} \cup \ \{u_m\}_{m=0}^{h-1})} \\&= \frac{\Capp_M(T)(1+o(1))}{\Capp_M(T) + \Capp_M((Y_m)_{m=0}^{iM})},
\end{align*}
where the last equality holds since $i\geq n^{2\neweps}/M$ and thus  $\Capp_M(\{u_m\}_{m=0}^{h-1}) \leq \frac{M^2}{\delta n} = o(\frac{\delta iM^2}{2n})$, which is a lower bound for $\Capp_M((Y_m)_{m=0}^{iM}))$ by \cref{lem:cap union bound}. This proves the claim.
\end{proof}

Using the Laplacian random walk representation, we can now tightly control the probability that the next LERW segment will have good capacity.

\begin{corollary}\label{cor:capacity tail}
Let $T \subset G_n$ be a subgraph such that $|T| \leq n^{1/2+\neweps}$ and let $Y$ be a LERW trajectory started at some $u \in V(G_n) \setminus T$ and killed when it hits $T$. Then for each $0 \leq i \leq \frac{n^{1/2+ \neweps}}{M}$, and any LERW trajectory $\{Y_m\}_{m=0}^{iM} = \{y_m\}_{m=0}^{iM}$ disjoint from $T$ we have
\begin{align*}
    &\prcond{(Y_m)_{m=iM}^{(i+1)M} \cap {T} = \emptyset \text{ and } \left|\Capp_M (Y[iM, (i+1)M)) - \frac{\alpha_n M^2}{n}\right| \geq  \frac{\alpha_n M^2}{n}n^{-\neweps/16}}{\{Y_m\}_{m=0}^{iM} = \{y_m\}_{m=0}^{iM}}{} \\
    &\leq \frac{4M^2}{\delta n}.
\end{align*}
\end{corollary}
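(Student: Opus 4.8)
The plan is to use the Laplacian random walk representation \eqref{eqn:LERW path factorisation} to show that, conditionally on the first $iM$ steps of $Y$, the law of the next $M$-step segment, \emph{restricted to the event that it avoids $T$}, is within a $1+o(1)$ factor (as a density) of the law of an ordinary length-$M$ random walk started from $y_{iM}$. The estimate will then follow immediately from the concentration of capacity for random walk segments established in \cref{prop:RW cap}.

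First I would note that on the event $E := \{(Y_m)_{m=iM}^{(i+1)M}\cap T = \emptyset\}$, the segment $(Y_{iM}, Y_{iM+1}, \dots, Y_{iM+M})$ is, by the loop-erasure property, a simple path $\sigma = (u_0, u_1, \dots, u_M)$ with $u_0 = y_{iM}$, with $\{u_1,\dots,u_M\}$ disjoint from $\{y_m\}_{m=0}^{iM}$, and (by definition of $E$) disjoint from $T$; in particular $u_M\notin T$. For each such $\sigma$, \eqref{eqn:LERW path factorisation} gives
\[
\prcond{(Y_m)_{m=iM+1}^{iM+M} = (u_m)_{m=1}^M}{\{Y_m\}_{m=0}^{iM} = \{y_m\}_{m=0}^{iM}}{} = \prstart{(X_m)_{m=1}^M = (u_m)_{m=1}^M}{y_{iM}}\, C\bigl((y_m)_{m=0}^{iM}, T, (u_m)_{m=1}^M\bigr),
\]
and since $i\le n^{1/2+\neweps}/M$, $|T|\le n^{1/2+\neweps}$, $M = n^{\neweps}$ and $u_M\notin T$, part (a) of \cref{lem:hitting lem LB rough same event} applies and gives $C = 1 + o(n^{5\neweps-1/2}) = 1+o(1)$, uniformly over all such $\sigma$. (Here conditioning on the vertex set $\{y_m\}_{m=0}^{iM}$ is the same as conditioning on the ordered trajectory, since a simple path from $u$ is determined by its vertex set.)

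Next I would sum this identity over all \emph{bad} paths, i.e.\ simple paths $\sigma = (y_{iM}, u_1,\dots,u_M)$ with $\{u_1,\dots,u_M\}\cap T = \emptyset$ and $\bigl|\Capp_M(\{u_0,\dots,u_{M-1}\}) - \tfrac{\alpha_n M^2}{n}\bigr| \ge \tfrac{\alpha_n M^2}{n} n^{-\neweps/16}$ (legitimate since $\Capp_M$ of a set of vertices depends only on the set, and $Y[iM,(i+1)M) = \{u_0,\dots,u_{M-1}\}$). The events $\{(X_m)_{m=1}^M = \sigma\}$ over distinct ordered paths $\sigma$ are disjoint, and on each of them $X[0,M) = \{u_0,\dots,u_{M-1}\}$, so for bad $\sigma$ each is contained in $\bigl\{\bigl|\Capp_M(X[0,M)) - \tfrac{\alpha_n M^2}{n}\bigr| \ge \tfrac{\alpha_n M^2}{n}n^{-\neweps/16}\bigr\}$. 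Hence $\sum_{\sigma\text{ bad}} \prstart{(X_m)_{m=1}^M = \sigma}{y_{iM}}$ is at most $\prstart{\bigl|\Capp_M(X[0,M)) - \alpha_n M^2/n\bigr| \ge \tfrac{\alpha_n M^2}{n}n^{-\neweps/16}}{y_{iM}}$, which by \cref{prop:RW cap} (with $u = y_{iM}$) is at most $2M^2/(\delta n)$. Combining with the uniform $1+o(1)$ bound on $C$, the conditional probability in the statement is at most $(1+o(1))\cdot 2M^2/(\delta n) \le 4M^2/(\delta n)$ once $n$ is large enough.

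There is no real obstacle: the two substantive inputs --- concentration of $\Capp_M(X[0,M))$ around $\alpha_n M^2/n$, and the fact that the Laplacian correction $C$ is $1+o(1)$ whenever the LERW segment does not hit $T$ --- are precisely \cref{prop:RW cap} and \cref{lem:hitting lem LB rough same event}(a), and both have already been proved. The only points needing a little care are bookkeeping ones: matching the convention $Y_m = Y_{H_T}$ for $m\ge H_T$ with the description of $E$, checking that the disjointness hypotheses of \cref{lem:hitting lem LB rough same event}(a) hold on $E$ (the LERW segment is automatically disjoint from $\{Y_m\}_{m=0}^{iM}$ and, on $E$, from $T$), and keeping track of the fact that the bound $C\le 1+o(1)$ is uniform over the relevant paths, so that the sum over bad paths converts cleanly into a single random walk probability.
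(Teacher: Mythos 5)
Your proposal is correct and follows the same route as the paper's proof: rewrite the conditional LERW probability via the Laplacian random walk factorisation \eqref{eqn:LERW path factorisation}, bound the correction factor $C$ uniformly by $1+o(1)\le 2$ using \cref{lem:hitting lem LB rough same event}(a), and then fall back on the random walk capacity concentration bound of \cref{prop:RW cap}. (One harmless aside is slightly off: a simple path from a fixed starting vertex is \emph{not} in general determined by its vertex set; but this is irrelevant, since the paper's curly-brace notation is simply shorthand for the ordered LERW trajectory, and your argument never actually uses the claimed equivalence.)
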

\begin{proof}
Let $A$ be the set of simple paths $(u_m)_{m=0}^{M}$ not intersecting $T$, with $\left|\Capp_M (u[0,M)) - \frac{\alpha_n M^2}{n}\right| \geq  \frac{\alpha_n M^2}{n}n^{-\neweps/16}$ and with $u_0=Y_{iM}$. It follows from \eqref{eqn:LERW path factorisation} that for any $i \geq 1$,
\begin{align}\label{eqn:LERW RW relation event}
\begin{split}
\prcond{ (Y_m)_{m=iM+1}^{iM+M} \in A}{(Y_m)_{m=0}^{iM} = (y_m)_{m=0}^{iM}}{} &\leq \prstart{ (X_m)_{m=1}^{H} \in A}{Y_{iM}} \sup_{(u_m)_{m=1}^{M} \in A} \left\{C((y_m)_{m=0}^{iM}, T, (u_m)_{m=1}^{M}))\right\}.
\end{split}
\end{align}
As $\left(u_m\right)_{m=1}^M$ does not intersect $T$, by \cref{lem:hitting lem LB rough same event}, the supremum is at most $2$ for all sufficiently large $n$.
Therefore, by \cref{prop:RW cap}, we have that
\begin{align*}
&\prcond{\left|\Capp_M (Y[iM, (i+1)M)) - \frac{\alpha_n M^2}{n}\right| \geq  \frac{\alpha_n M^2}{n}n^{-\neweps/16} \text{ and } Y[iM, (i+1)M) \cap T = \emptyset}{(Y_m)_{m=0}^{iM} = (y_m)_{m=0}^{iM}}{} \\
&\leq \frac{4M^2}{\delta n}. \qedhere
\end{align*}
\end{proof}

In the next lemma we compute some hitting probabilities for a random walk.

\begin{lemma}\label{lem:hitting without intersections}
Let $X$ be a random walk on $G_n$. Let $T \subset G_n$ be a subgraph such that $|T| \leq n^{1/2+\neweps}$, and let $A$ be a connected subset of $T$. Then for any $0 \leq i \leq \frac{n^{1/2+ \neweps}}{M}$ , for any simple path $y[0, iM]$ on $G_n$ disjoint from $T$ and any $u\notin T\cup y[0,iM]$
\begin{align*}
   &\prstart{X[0,M] \cap A \neq \emptyset \pand \not \exists 0<j<\ell \leq \tau_{A} : X_j = X_{\ell} \pand \not \exists j \leq \tau_{A} : X_j \in y[0,iM] \cup (T \setminus A)}{u} \\
   &= \Capp_M(A) (1 + o(1)).
\end{align*}
\end{lemma}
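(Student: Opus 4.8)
The plan is to bound the probability in question both above and below by $\Capp_M(A)(1+o(1))$. For the upper bound, I would simply note that the event in question is contained in the event $\{X[0,M] \cap A \neq \emptyset\} = \{\tau_A \leq M\}$, and so by \cref{claim:capacity different start point} (applicable since $|A| \leq |T| \leq n^{1/2+\neweps}$ and $M = n^{\neweps} \geq (\log n)^2 \tmix$ for large $n$, using $\tmix = O(\log n)$ from \cref{claim:mixing time expander}), we have $\prstart{\tau_A \leq M}{u} = \Capp_M(A) + O\big(\frac{\log n \cdot \tmix |A|}{\delta n}\big)$. Since $\Capp_M(A) \geq \frac{\delta M |A|}{2n}$ by the lower bound of \cref{lem:cap union bound} (valid as $M|A| \leq n^{1/2+2\neweps} \leq \frac{\delta^3 n}{2}$), the error term is $o(1)$ relative to $\Capp_M(A)$, giving the upper bound $\Capp_M(A)(1+o(1))$.

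For the lower bound, I would start from $\prstart{\tau_A \leq M}{u} = \Capp_M(A)(1+o(1))$ (same estimate) and subtract the probabilities of the three ``bad'' sub-events, each intersected with $\{\tau_A \leq M\}$: (i) the walk makes a self-intersection before time $\tau_A$; (ii) the walk hits $y[0,iM]$ before $\tau_A$; (iii) the walk hits $T \setminus A$ before $\tau_A$. For (i): conditionally on any trajectory of length $t < \tau_A$, the probability the next step revisits a previously-visited vertex is at most $t/(\delta n)$ by the minimal degree bound; summing over $t \leq M$ gives a bound of $O(M^2/(\delta n)) = O(n^{2\neweps - 1})$, which is $o(\Capp_M(A))$ since $\Capp_M(A) \gtrsim M|A|/n \gtrsim M n^{3\neweps}/n$ — wait, we only know $A \subset T$ is connected, not that $|A| \geq n^{3\neweps}$; but even $|A| \geq 1$ gives $\Capp_M(A) \gtrsim M/n = n^{\neweps-1}$, and $n^{2\neweps-1} = o(n^{\neweps-1})$ as long as $\neweps > 0$, so this is fine. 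For (ii): the event $\{\tau_{y[0,iM]} < \tau_A \leq M\}$ is contained in $\{\tau_{y[0,iM]} < M\} \cap \{\tau_A < M\}$, so by \cref{cor:closeness UB deter} its probability is at most $\frac{2M^2 |y[0,iM]| |A|}{\delta^2 n^2} \leq \frac{2M^2 (iM+1)|A|}{\delta^2 n^2} = O\big(\frac{M^2 \cdot n^{1/2+\neweps} |A|}{n^2}\big) = O\big(\frac{|A| M}{n} \cdot \frac{M n^{1/2+\neweps}}{n}\big) = o(\Capp_M(A))$ since $M n^{1/2+\neweps}/n = n^{2\neweps - 1/2} \to 0$. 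For (iii): identically, $\{\tau_{T\setminus A} < \tau_A < M\} \subset \{\tau_{T \setminus A} < M\} \cap \{\tau_A < M\}$ and \cref{cor:closeness UB deter} gives probability at most $\frac{2M^2 |T\setminus A| |A|}{\delta^2 n^2} = O\big(\frac{|A| M}{n} \cdot \frac{M n^{1/2+\neweps}}{n}\big) = o(\Capp_M(A))$, again using $|T| \leq n^{1/2+\neweps}$.

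Combining, the probability in the lemma is at least $\Capp_M(A)(1+o(1)) - o(\Capp_M(A)) = \Capp_M(A)(1+o(1))$, which together with the matching upper bound gives the result; and all the $o(1)$ and $O(\cdot)$ terms are uniform over the choice of $u$, $y[0,iM]$, $A$ and $i$ in the stated ranges, since every bound used was uniform. The only mild subtlety — and the step I would be most careful about — is checking that every error term really is negligible compared to $\Capp_M(A)$ for \emph{all} admissible $A$, including the worst case $|A| = 1$: this is what forces us to compare against $M/n = n^{\neweps-1}$ rather than a larger quantity, and one should double-check that each of the three bad-event bounds above is indeed $o(n^{\neweps-1})$, which reduces to the inequalities $n^{2\neweps-1} = o(n^{\neweps-1})$ (true since $\neweps>0$) and $n^{2\neweps-1/2} = o(1)$ (true since $\neweps < 1/32 < 1/4$). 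No genuinely hard estimate is needed; this is really an assembly of \cref{claim:capacity different start point}, \cref{lem:cap union bound}, and \cref{cor:closeness UB deter}.
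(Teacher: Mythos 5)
Your overall structure matches the paper's: upper bound by $\prstart{\tau_A\leq M}{u}$ via \cref{claim:capacity different start point}, lower bound by subtracting three bad events (self-intersection, hitting $y[0,iM]$, hitting $T\setminus A$). Parts (ii) and (iii) of your lower bound are fine, and your use of \cref{cor:closeness UB deter} there is essentially what the paper does (the factor of $|A|$ in the numerator is exactly what lets you compare against $\Capp_M(A)\gtrsim M|A|/n$).

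However, there is a genuine gap in your treatment of bad event (i). You correctly set up the bad event as \emph{self-intersection before $\tau_A$, intersected with $\{\tau_A\leq M\}$}, but then your actual bound drops the intersection: you only bound $\pr{\text{self-intersect in }M\text{ steps}}\leq M^2/(\delta n)=n^{2\neweps-1}$. Your attempt to rescue this — ``even $|A|\geq 1$ gives $\Capp_M(A)\gtrsim M/n=n^{\neweps-1}$, and $n^{2\neweps-1}=o(n^{\neweps-1})$'' — rests on a false inequality: $n^{2\neweps-1}/n^{\neweps-1}=n^{\neweps}\to\infty$, so $n^{2\neweps-1}$ is asymptotically \emph{larger} than $n^{\neweps-1}$, not smaller. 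For small $A$ (say $|A|=1$, which the lemma must cover since it is later applied with $A=T$ and $T$ can be a single vertex) your bound on (i) does not give $o(\Capp_M(A))$. The fix is exactly to keep the intersection you set up: by the Markov property at the first self-intersection time, $\pr{\text{self-intersect before }\tau_A\text{ and }\tau_A\leq M}\leq \pr{\text{self-intersect in }M\text{ steps}}\cdot\sup_v\prstart{\tau_A<M}{v}\leq \frac{M^2}{\delta n}\cdot\frac{M|A|}{\delta n}$, and now the ratio to $\Capp_M(A)\gtrsim\frac{\delta M|A|}{n}$ is $O(M^2/(\delta^3 n))=o(1)$ uniformly in $A$. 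This is precisely what the paper's proof does for this term.
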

\begin{proof}
\textbf{Upper bound.} Recall that $\tplus = \log_2^2(n)\tmix$. First note that by \cref{claim:capacity different start point}, we have that
\begin{align*}
   &\prstart{X[0,M] \cap A \neq \emptyset \pand \not \exists 0<j<\ell\leq \tau_{A} : X_j = X_{\ell} \pand \not \exists j \leq \tau_{A} : X_j \in y[0,iM]\cup (T \setminus A)}{u} \\
   &\leq \prstart{X[0,M] \cap A \neq \emptyset}{u} \\
   &\leq \Capp_M(A) + \frac{3|A|\log n \cdot \tmix}{\delta n} \leq \Capp_M (A) \left(1+\frac{\tplus}{\delta^2 M}\right).
\end{align*}
Here the final line follows because of \cref{lem:cap union bound}, which implies that $\Capp_M(A) \geq \frac{\delta M|A|}{2n}$ on the event $|T| \leq n^{1/2+\neweps}$.

\textbf{Lower bound.}
We first note that
\begin{align*}
    &\prstart{X[0,M] \cap A \neq \emptyset}{u} - \prstart{X[0,M] \cap A \neq \emptyset \pand \exists 0 < j < \ell < \tau_A: X_j = X_{\ell}}{u} \\
    &- \prstart{X[0,M] \cap A \neq \emptyset \pand \exists j < M: X_j \in y[0,iM] \cup (T \setminus A)}{u}.
    \\
    &\leq 
    \prstart{X[0,M] \cap A \neq \emptyset \pand \not \exists 0<j<\ell \leq \tau_A : X_j = X_{\ell} \pand \not \exists j \leq \tau_A : X_j \in y[0,iM] \cup (T \setminus A)}{u}
\end{align*}
We will now bound all three terms on the left hand side.
First, we lower bound $\prstart{X[0,M]\cap A \neq \emptyset}{u}$ by $\Capp_M (A) \left(1-\frac{3 \tplus}{\delta^2 M}\right)$ by \cref{claim:capacity different start point}.
For the second term, we upper bound it by the product of the probabilities for $X$ to self intersect in $M$ steps, and then to hit $A$ in another $M$ steps. This is upper bounded by
\begin{equation*}
    \frac{M^2}{\delta n}\cdot \frac{M|A|}{\delta n}.
\end{equation*}
The third term can be bounded by the probability to hit $y[0,iM]\cup (T \setminus A)$ in at most $M$ steps, and then to hit $A$ in at most another $M$ steps, which is upper bounded by
\begin{equation*}
    \frac{M|A|}{\delta n}\cdot \frac{M(iM+|T|)}{\delta n}.
\end{equation*}
We conclude that
\begin{align*}
    &\Capp_M(A)\left(1+O\left(\frac{\tplus}{M}\right)\right) \leq \Capp_M(A)\left(1-\frac{\tplus}{\delta^2 M}\right) - 2\cdot\frac{M|A|}{\delta n}\cdot \frac{M(iM+|T|)}{\delta n} \\&
    \leq \prstart{X[0,M] \cap A \neq \emptyset \pand \not \exists 0<j<\ell\leq \tau_A : X_j = X_{\ell} \pand \not \exists j \leq \tau_{A} : X_j \in y[0,iM] \cup (T \setminus A)}{u}.\qedhere
\end{align*}
\end{proof}

We now have estimates for all the quantities appearing in \eqref{eqn:LERW path factorisation}. We combine these in the next corollary.

\begin{corollary}\label{cor:hit A in next interval}
Let $T \subset G_n$ be a subgraph such that $|T| \leq n^{\neweps+ 1/2}$, and let $A \subseteq T$. Let $Y$ be a LERW on $G_n$ killed when it hits $T$. For each $\frac{n^{2\neweps}}{M} \leq i \leq \frac{n^{1/2+\neweps}}{M}$ and for any simple path $(y_m)_{m=0}^{iM}$ not intersecting $T$, satisfying
\[
\left|\Capp_M (y[jM, (j+1)M)) - \frac{\alpha_n M^2}{n}\right| \leq  \frac{\alpha_n M^2}{n}n^{-\neweps/16} \text{ for all } \frac{n^{2\neweps}}{M} \leq j < i,
\]
it holds that
\begin{align*}
    &\prcond{Y \text{ hits } T \text{ in time interval } [iM, (i+1)M) \text{ in set } A}{(Y_m)_{m=0}^{iM} = (y_m)_{m=0}^{iM}}{} \\
    &= \frac{\Capp_M(A)}{\Capp_M(T)} \left(\Capp_M (T) + \frac{\alpha_n iM^2}{n}\right) (1+o(1)).
\end{align*}
\end{corollary}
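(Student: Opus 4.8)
The plan is to unwind the Laplacian random walk representation \eqref{eqn:LERW path factorisation} one segment at a time and feed in the three ingredients already established: the control of the constant $C$ (\cref{lem:hitting lem LB rough same event}), the random walk hitting estimate (\cref{lem:hitting without intersections}), and the control on the capacity of a typical LERW segment (the hypothesis, together with the subadditivity/closeness argument used in \cref{prop:RW cap}). First I would write the conditional event $\{Y\text{ hits }T\text{ during }[iM,(i+1)M)\text{ at a vertex of }A\}$ as a disjoint union over simple paths $(u_m)_{m=1}^{H}$ with $1\le H\le M$, $u_0=y_{iM}$, $u_H\in A$, and $\{u_1,\dots,u_{H-1}\}$ disjoint from $T\cup\{y_m\}_{m=0}^{iM}$. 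By \eqref{eqn:LERW path factorisation}, the conditional probability of following each such path is $\prstart{(X_m)_{m=1}^{H}=(u_m)_{m=1}^H}{y_{iM}}\cdot C((y_m)_{m=0}^{iM},T,(u_m)_{m=1}^H)$.

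Since $u_H\in A\subseteq T$, the intermediate path $\{u_1,\dots,u_{H-1}\}$ avoids $T$, $|T|\le n^{1/2+\neweps}$ and $n^{2\neweps}/M\le i\le n^{1/2+\neweps}/M$, \cref{lem:hitting lem LB rough same event}(b) applies and gives, uniformly over all admissible $(u_m)$,
\[
C((y_m)_{m=0}^{iM},T,(u_m)_{m=1}^H)=\frac{\Capp_M(T)+\Capp_M(y[0,iM])}{\Capp_M(T)}\,(1+o(1)),
\]
so this factor can be pulled out of the sum. The remaining sum $\sum_{(u_m)}\prstart{(X_m)_{m=1}^{H}=(u_m)_{m=1}^H}{y_{iM}}$ is, up to an additive error of order $o(\Capp_M(A))$ arising from the event that a random walk from $y_{iM}$ returns to $y_{iM}$ before hitting $A$ (bounded by $O(M^2|A|/(\delta^2n^2))$ via \cref{cor:closeness UB deter} applied to the disjoint sets $\{y_{iM}\}$ and $A$, which is $o(\Capp_M(A))$ by the lower bound in \cref{lem:cap union bound}), exactly the probability appearing in \cref{lem:hitting without intersections} with base point $u=y_{iM}$ and path $y[0,iM-1]$: namely that a random walk from $y_{iM}$ hits $A$ within $M$ steps while staying self-avoiding and avoiding $y[0,iM]\cup(T\setminus A)$ up to $\tau_A$. \cref{lem:hitting without intersections} identifies this with $\Capp_M(A)(1+o(1))$.

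It remains to replace $\Capp_M(y[0,iM])$ by $\alpha_n iM^2/n$. Subadditivity of capacity gives $\Capp_M(y[0,iM])\le\sum_{j=0}^{i-1}\Capp_M(y[jM,(j+1)M))$, and a closeness correction exactly as in \eqref{eqn:subtract closeness} (using \cref{cor:closeness UB deter}) yields a matching lower bound up to an additive $O(i^2M^4/(\delta^2n^2))$, which is $o(\alpha_n iM^2/n)$ over the whole range $i\le n^{1/2+\neweps}/M$ since $\neweps<1/4$. By hypothesis each of the $\Theta(i)$ guaranteed segments has capacity $\tfrac{\alpha_n M^2}{n}(1+o(1))$, and the at most $O(n^{2\neweps}/M)$ initial segments carrying no guarantee contribute at most $O(n^{2\neweps}M/(\delta n))$, a term that is lower order relative to $\Capp_M(T)+\alpha_n iM^2/n$ in the regime in which the corollary is applied (in particular when $|A|\ge n^{3\neweps}$, so that $\Capp_M(T)\ge\Capp_M(A)\gtrsim \alpha_n M n^{3\neweps}/n$ dominates). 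Hence $\Capp_M(T)+\Capp_M(y[0,iM])=(\Capp_M(T)+\alpha_n iM^2/n)(1+o(1))$, and combining with the two previous paragraphs gives $\frac{\Capp_M(A)}{\Capp_M(T)}\bigl(\Capp_M(T)+\alpha_n iM^2/n\bigr)(1+o(1))$, as claimed. The genuinely delicate point — and the only real work, the hard estimates on $C$, on the hitting probability and on segment capacity having already been done — is the bookkeeping in this last step: tracking all the $o(1)$'s uniformly in $i$ and verifying that the un-guaranteed initial segments and the double-counting corrections are genuinely absorbed.
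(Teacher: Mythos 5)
Your proposal follows the same route as the paper's proof: decompose the event over simple paths in $\Gamma_{y_{iM}\to A,T,H}$, factorize via \eqref{eqn:LERW path factorisation}, pull out the constant $C$ using \cref{lem:hitting lem LB rough same event}(b), identify the remaining sum via \cref{lem:hitting without intersections} together with \cref{claim:capacity different start point} and \cref{cor:closeness UB deter}, and substitute the capacity hypothesis to replace $\Capp_M(y[0,iM])$ by $\alpha_n iM^2/n$. Your extra bookkeeping — the self-return-to-$y_{iM}$ correction bounded via \cref{cor:closeness UB deter} and \cref{lem:cap union bound}, and the observation that the $O(n^{2\neweps}/M)$ un-guaranteed initial segments contribute only $O(Mn^{2\neweps}/(\delta n))$ and are absorbed by $\Capp_M(T)$ in the regime of application — is precisely what the paper compresses into the single line ``it follows from \cref{cor:closeness UB deter} that $\Capp_M(y[0,iM))=\alpha_n iM^2(1+o(1))/n$,'' and your version is the more careful one, since that displayed claim is not literally uniform near the boundary $i\approx n^{2\neweps}/M$ and the error really is only absorbed through the $\Capp_M(T)$ term in the final expression.
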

\begin{proof}
First note that it follows from \cref{cor:closeness UB deter} that 
\[
\Capp_M (y[0, iM)) = \frac{\alpha_n iM^2 (1+o(1))}{n}.
\]
Given $1 \leq H < M$, $(y_m)_{m=0}^{iM}$ and $T$, let $\Gamma_{y_{iM} \to A, T, H}$ denote the set of simple paths with $u_0=y_{iM}$ that first hit $T$ in the set $A$ and at time $H$, and avoid $(y_m)_{m=0}^{iM}$ until that time. We can then write, using \cref{lem:hitting lem LB rough same event}(b) and \cref{lem:hitting without intersections}:
\begin{align*}
&\prcond{Y \text{ hits } T \text{ in time interval } [iM, (i+1)M) \text{ in set } A}{(Y_m)_{m=0}^{iM} = (y_m)_{m=0}^{iM}}{}
 \\
&= \sum_{H < M} \sum_{\substack{(u_m)_{m=0}^H \in \\ \Gamma_{y_{iM} \to A, T, H}}} \prcond{Y[iM, iM+H] = (u_m)_{m=0}^H}{(Y_m)_{m=0}^{iM} = (y_m)_{m=0}^{iM}}{} \\
&=\sum_{H < M} \sum_{\substack{(u_m)_{m=0}^H \in \\ \Gamma_{y_{iM} \to A, T, H}}} C((y_m)_{m=1}^{iM}, T, (u_m)_{m=1}^{H}) \prstart{X[0, H] = (u_m)_{m=0}^H}{y_{iM}} \\
&=\sum_{H < M} \sum_{\substack{(u_m)_{m=0}^H \in \\ \Gamma_{y_{iM} \to A, T, H}}} \frac{[\Capp_M(T) + \Capp((Y_m)_{m=0}^{iM})](1+o(1))}{\Capp_M(T)} \prstart{X[0, H] = (u_m)_{m=0}^H}{y_{iM}} \\
&=\frac{[\Capp_M(T) + \Capp((Y_m)_{m=0}^{iM})](1+o(1))}{\Capp_M(T)} \sum_{H < M} \sum_{\substack{(u_m)_{m=0}^H \in \\ \Gamma_{y_{iM} \to A, T, H}}} \prstart{X[0, H] = (u_m)_{m=0}^H}{y_{iM}} \\
&=\frac{[\Capp_M(T) + \Capp((Y_m)_{m=0}^{iM})]}{\Capp_M(T)} \Capp_M(A)(1+o(1)).
\end{align*}
Here the final line follows by \cref{claim:capacity 	different start point}, \cref{cor:closeness UB deter} and  \cref{lem:hitting without intersections}.
\end{proof}

\begin{proof}[Proof of \cref{cor:main stick breaking ingredient}]
\begin{enumerate}[(1)]
    \item Fix some $T$ which is good, and fix some $C>0$. For every $i \leq C\beta_n \sqrt{n} / M$, let $E_{C,i}$ be the event that
\begin{equation*}
\left|\Capp_M (Y[iM, (i+1)M)) - \frac{\alpha_n M^2}{n}\right| \leq  \frac{\alpha_n M^2}{n}n^{-\neweps/16} \pand Y[iM,(i+1)M) \cap T = \emptyset.
\end{equation*}
Write $E_{prefix}$ for the event $\cap_{i\leq n^{2\neweps}/M} E_{C,i}$. Note that by  \cref{lem:dont hit too soon} and \cref{cor:capacity tail} we have that
\begin{equation*}
    \pr{E_{prefix}} = 1 - o(1).
\end{equation*}

Note that, by \cref{cor:capacity tail} and \cref{cor:hit A in next interval}, for any $i \geq n^{2\neweps} / M$, given $E_{prefix} \cap_{n^{2\neweps}/M \leq j \leq i} E_{C,j}$, using \cref{cor:hit A in next interval} we have
\begin{align*}
&\prcond{E_{C,i}}{ E_{prefix} \pand \cap_{n^{2\neweps}/M \leq j \leq i} E_{C,j} }{} \\
&= 1 - \left(\Capp_M (T) + \frac{\alpha_n iM^2}{n}\right) (1+o(1)) - O\left(\frac{4M^2}{\delta n}\right) = 1 - \left(\frac{\alpha_nM|T|}{n} + \frac{\alpha_n iM^2}{n}\right) (1+o(1)).
\end{align*}
Here the final line holds since $T$ is good, $i \geq \frac{n^{2\neweps}}{M}$ and our conditioning on $E_{prefix}\cap \cap_{n^{2\neweps}/M \leq j \leq i} E_{C,j}$.
Then, write $E_C$ for the event that
\begin{equation*}
    E_{prefix} \pand \{Y[0, C\beta_n\sqrt{n}] \cap T = \emptyset\} \pand \left( \cap_{i=n^{2\neweps}/M}^{C\beta_n \sqrt{n}/M} E_{C,i} \right),
\end{equation*}
we have that,
\begin{align*}
    &\prcond{E_C}{\Tkkn=T}{} =
    \\& \prcond{E_{prefix}}{\Tkkn = T}{}\cdot\\&\prod_{i=n^{2\neweps/M}}^{C\beta_n \sqrt{n} / M}\prcond{E_{C,i}}{E_{prefix} \pand \cap_{n^{2\neweps}/M \leq j < i}E_{C,j}}{} 
    \\ &=(1-o(1))\prod_{i=n^{2\neweps}/M}^{C\beta_n\sqrt{n}/M}
     \left(1 - \left(\frac{\alpha_nM|T|}{n} + \frac{\alpha_n iM^2}{n}\right) (1+o(1))\right) 
     \\& =(1-o(1))\exp\left(-\sum_{i=n^{2\neweps}/M}^{C\beta_n\sqrt{n}/M}(1-o(1))\alpha_n(M|T|/n + iM^2/n)\right) 
     \\&= (1-o(1))\exp\left(-\alpha_n\frac{C\beta_n\sqrt{n}}{2M}(2M|T|/n + C\beta_nM/\sqrt{n})\right) \\
     &=(1-o(1))\exp(-(C^2/2 + C|T|/(\beta_n\sqrt{n})).
\end{align*}
(Here in the last line we used that $\beta_n^2=\frac{1}{\alpha_n}$ by definition).
To conclude, note that
\begin{align}\label{eqn:no bad capacity}
    \prcond{Y[0, C \beta_n\sqrt{n}] \cap \Tkkn = \emptyset}{\Tkkn = T}{} - \prcond{E_C}{\Tkkn = T}{} \leq \frac{C\beta_n\sqrt{n}}{M} \cdot \frac{4M^2}{\delta n} = o(1) 
\end{align}
by \cref{cor:capacity tail}. On the event $|T| \leq B \sqrt{n}$, this can be written in the form $o_B(1) \prcond{E_C}{\Tkkn = T}{}$ using the estimate above.

\item It follows directly from \cref{cor:hit A in next interval} and \cref{def:good tree} that for any $i>n^{2\neweps}/M$, conditionally on $H_{\Tkkn} \in (iM, (i+1)M]$, we have that
\begin{align*}
\prcond{Y_{H_{\Tkkn}} \in A}{H_{\Tkkn} \in (iM, (i+1)M]}{} = \frac{|A|(1+o(1))}{|\Tkkn|},
\end{align*}
as required.
\item Given $\epsilon>0$, first choose $C<\infty$ so that the probability appearing in part (1) is at most $\epsilon$. Then, on the event $Y[0, C\beta_n\sqrt{n}] \cap T \neq \emptyset$, we have that the probability that $\Tkn$ is not good is upper bounded by $\epsilon + o(1)$ by \eqref{eqn:no bad capacity}. Since $\epsilon>0$ is arbitrary this gives the result.
\end{enumerate}
\end{proof}

\section{Proof of \cref{thm:main2}}\label{sctn:Wilson is stickbreaking}

In this section we prove \cref{thm:main2}. We start by using the estimates of the previous section to prove \cref{thm:pr04}. At the end of the section, we address the lower mass bound condition which completes the proof of \cref{thm:main2}.

\subsection{Proof of \cref{thm:pr04}}

In \cref{def:stick breaking} we defined how a sequence of trees can be constructed through a stick-breaking process. In what follows next we outline how, for any $k \geq 1$, Wilson's algorithm on $G_n$ can be used to give two sequences $(Y_i)_{i = 0}^n$ and $(Z_i)_{i = 0}^{n-1}$ such that $\SBk ((Y_0, Y_1, \ldots, Y_{k-1}), (Z_0, Z_1, \ldots, Z_{k-2} ))$ is equal to the subtree obtained after the first $k-1$ steps of Wilson's algorithm, and such that the ${k \choose 2}$ distances appearing in \cref{thm:pr04} therefore match those between the points $(Y_0, Y_1, \ldots, Y_{k-1})$ in the stick-breaking construction.

    Let $G_n$ be a graph on $n$ vertices and recall the definition of $\beta_n$ from \eqref{eq: def of beta n}. We will define a stick-breaking process $(Y_i^n)_{i = 0}^{n}, (Z_i^n)_{i = 0}^{n-1}$ which arises from Wilson's Algorithm on $G_n$. To ease notation, we shall remove the superscript and begin with $Y_0=0, Z_0=0$. We choose an ordering of the vertices of $G$, denoted by $\{v_1, \ldots, v_n\}$. Then, at the first step, we sample the $\UST$ path (using Wilson's algorithm) from $v_2$ to $v_1$. We denote this path by $T^{(2)}_n$, and let $Y_1$ be the length of this path divided by $\beta_n \sqrt{n}$. For every vertex $z$ on this path we say that $z$ was added at the first step. Let $k\geq 2$ and assume that we sampled $\Tkn$ and $Z_0,\ldots,Z_{k-2}$ and $Y_0,\ldots, Y_{k-1}$. For the $k^{th}$ step, take $v_{k+1}$ and sample (again, using Wilson's algorithm) its path to $\Tkn$. Denote this path $P_k$ and set $\Tkkkn = P_k \cup \Tkn$. For every vertex in $P_k \setminus \Tkn$, we say that it was added on the $k^{th}$ step. Let $Y_k = Y_{k-1} + \frac{|P_k|}{\beta_n\sqrt{n}}$. In order to define $Z_{k-1}$, first let $z$ be the vertex at which $P_k$ hits $\Tkn$. If $z$ is of the form $v_m$ for some $m$, set $Z_{k-1} = Y_{m-1}$. Otherwise, let $m<k$ be the step at which $z$ was added. Then, $Y_{m-1} \leq Z_{k-1} \leq Y_{m}$ and the exact value of $Z_{k-1}$ is 
\begin{equation*}
    Z_{k-1} = Y_{m} - \frac{d(z,v_{m+1})}{\beta_n \sqrt{n}}.
\end{equation*}

Furthermore, this way we can define a function $I$ that identifies every $v\in \Tkn$ with a point in $[0,\infty)$. If $v$ was added at the $m^{th}$ step then we set $I(v) = Y_m - \frac{d(v,v_{m+1})}{\beta_n \sqrt{n}}$.

Throughout this section, we also let $(Y_i')_{i \geq 0}$ and $(Z_i')_{i \geq 0}$ be the analogous quantities for stick-breaking of the CRT, sampled as described in \cref{prop:CRT stick breaking}.

We will use the following claim. Recall the definition of ``good" from \cref{def:good tree}.

\begin{claim}\label{cl: discrete and uniform are close}
	Assume that $\Tkkn$ is good and that $|\Tkkn| \in [C^{-1}\sqrt{n}, C\sqrt{n}]$. Let $I_{\max} = \max_{v \in \Tkkn}I(v)$. Let $Y$ be a LERW started from $v_k$, and let $H_{\Tkkn}$ be the time at which $Y$ hits $\Tkkn$. Let $j \leq n^{1/2+\neweps}/M$ and let $\Pb_{d,j}$ be the measure on $[0, I_{\max}]$ defined by
	\begin{equation*}
	   \prstart{I(v)}{d,j} = \prcond{Y_{H_{\Tkkn}} = v}{H_{\Tkkn} \in [j\eps \beta_n \sqrt{n}/2, (j+1)\eps\beta_n \sqrt{n}/2)}{} \ \ \ \forall v \in \Tkkn.
	\end{equation*}
	Then, for every $\eps>0$ there exists $N\in \NN$ such that for all $n>N$ and for all $j \leq n^{1/2+\eps}/M$, the Prohorov distance between the measure $\Pb_{d,j}$
	and the uniform probability measure on $[0,I_{\max}]$ is less than $\eps$.
\end{claim}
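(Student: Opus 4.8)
The plan is to use Proposition~\ref{cor:main stick breaking ingredient}(2), which tells us that the LERW hitting measure on a good tree $T = \Tkkn$, \emph{conditioned on the hitting time lying in a window $[iM, (i+1)M)$}, is approximately uniform on the vertices of $T$: for any connected $A \subset T$ with $|A| \geq n^{3\neweps}$ and any $n^{2\neweps}/M \leq i \leq n^{1/2+\neweps}/M$ we have $\prcond{Y_{H_T} \in A}{H_T \in [iM,(i+1)M)}{} = \frac{|A|}{|T|}(1+o(1))$, uniformly in $i$. The idea is that the window $[j\eps\beta_n\sqrt n/2, (j+1)\eps\beta_n\sqrt n/2)$ appearing in the claim is a union of roughly $\eps\beta_n\sqrt n/(2M)$ of the small windows $[iM,(i+1)M)$, so by decomposing over these and averaging, the same near-uniformity persists on the larger window.

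First I would reduce the statement about Prohorov distance between $\Pb_{d,j}$ on $[0,I_{\max}]$ and the uniform measure on $[0,I_{\max}]$ to a statement about measures of intervals. Since $\mu(\Tkkn) = |\Tkkn|$ points of $\Tkkn$ are mapped by $I$ into $[0, I_{\max}]$ with $I_{\max} \asymp C/\beta_n \asymp C$, and each stick added at step $m$ contributes a set of consecutive $I$-values spaced $1/(\beta_n\sqrt n)$ apart, the pushforward under $I$ of the uniform measure on the vertices of $\Tkkn$ is within Prohorov distance $O(1/\sqrt n)$ of the uniform (Lebesgue) probability measure on $[0,I_{\max}]$ — this is just because $\Tkkn$ is good (so it has at most $n^{1/2+\neweps}$ vertices and is a tree built from $k-1$ sticks) and $|\Tkkn| \geq C^{-1}\sqrt n$. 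So it suffices to show that $\Pb_{d,j}$ is close in Prohorov (equivalently, on intervals $[a,b] \subset [0,I_{\max}]$, or on the finite partition into sub-intervals of length $\eps$) to the uniform measure on the vertices of $\Tkkn$.

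Next, for a fixed interval $[a,b] \subset [0, I_{\max}]$ with $b - a \geq \eps$, let $A_{[a,b]}$ be the set of vertices $v \in \Tkkn$ with $I(v) \in [a,b]$. This set is a union of at most $k-1$ connected sub-paths of the $k-1$ sticks comprising $\Tkkn$; discarding the sticks (at most $k-1$ of them, each of length $O(\sqrt n) \cdot$ something, but in fact each relevant piece that is "short" contributes $o(\sqrt n)$ vertices) whose intersection with $A_{[a,b]}$ has fewer than $n^{3\neweps}$ vertices changes $|A_{[a,b]}|/|\Tkkn|$ by at most $(k-1)n^{3\neweps}/(C^{-1}\sqrt n) = o(1)$, and changes $\Pb_{d,j}(A_{[a,b]})$ by the same $o(1)$. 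On the remaining pieces $A_1, \dots, A_r$ (with $r \leq k-1$, each connected with $|A_\ell| \geq n^{3\neweps}$) I apply Proposition~\ref{cor:main stick breaking ingredient}(2) after decomposing the window: write $[j\eps\beta_n\sqrt n/2,(j+1)\eps\beta_n\sqrt n/2) = \bigsqcup_{i \in \mathcal{I}_j} [iM,(i+1)M)$ up to the negligible endpoint remainder of length $< M$, where $\mathcal{I}_j$ is a set of consecutive indices all lying in $[n^{2\neweps}/M, n^{1/2+\neweps}/M]$ provided $j$ is not too small; then
\begin{align*}
\Pb_{d,j}(A_\ell) &= \sum_{i \in \mathcal{I}_j} \prcond{Y_{H_{\Tkkn}} \in A_\ell}{H_{\Tkkn} \in [iM,(i+1)M)}{}\, w_i + o(1) \\
&= \frac{|A_\ell|}{|\Tkkn|}(1+o(1)) + o(1),
\end{align*}
where $w_i$ is the conditional probability that $H_{\Tkkn}$ lies in the $i$-th sub-window given it lies in the big window, so that $\sum_{i} w_i = 1$, and the $o(1)$ from Proposition~\ref{cor:main stick breaking ingredient}(2) is uniform in $i$. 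Summing over $\ell$ gives $\Pb_{d,j}(A_{[a,b]}) = |A_{[a,b]}|/|\Tkkn| + o(1)$, which is within $o(1)$ of the Lebesgue measure of $[a,b]$ by the previous paragraph.

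Finally, since the partition of $[0, I_{\max}]$ into $O(1/\eps)$ intervals of length $\eps$ is finite and the estimates above are uniform over the (finitely many) resulting sets and over $j$ in the allowed range, taking $n$ large makes all the $o(1)$ terms smaller than any prescribed function of $\eps$, and a standard argument (Prohorov distance is controlled by agreement of measures on a fine finite partition, up to an additive term comparable to the mesh) yields $d_P(\Pb_{d,j}, \mathrm{Unif}[0,I_{\max}]) < \eps$ for all $n > N$. The main obstacle is the bookkeeping in the second-to-last step: one must handle the sticks that meet $[a,b]$ in a short piece (fewer than $n^{3\neweps}$ vertices), where Proposition~\ref{cor:main stick breaking ingredient}(2) does not directly apply, and argue that their total contribution is negligible — this uses goodness of $\Tkkn$ (bounded number of sticks, lower bound $|\Tkkn| \geq C^{-1}\sqrt n$) together with the choice $\neweps < 1/32$ so that $n^{3\neweps} = o(\sqrt n)$ — and one must ensure the decomposition of the large window into $M$-windows only involves indices $i \geq n^{2\neweps}/M$, which requires restricting attention to $j \gtrsim n^{2\neweps}/(\eps\beta_n\sqrt n)$; the contribution of the excluded small values of $j$ can be absorbed since they correspond to $H_{\Tkkn} \leq n^{2\neweps}$, an event of probability $o(1)$ by Lemma~\ref{lem:dont hit too soon}.
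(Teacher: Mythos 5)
Your proof is correct and follows essentially the same route as the paper's: partition $[0,I_{\max}]$ into $\eps$-intervals, identify each with a union of at most $k$ connected pieces of $\Tkkn$, discard pieces with fewer than $n^{3\neweps}$ vertices, apply \cref{cor:main stick breaking ingredient}(2) to the remaining pieces, and close by a standard finite-partition bound on the Prohorov distance. You are somewhat more explicit than the paper in two places: you spell out the decomposition of the macroscopic conditioning window $[j\eps\beta_n\sqrt n/2,(j+1)\eps\beta_n\sqrt n/2)$ into $M$-windows and the averaging with weights $w_i$ (the paper compresses this into the parenthetical ``decomposing \dots into smaller intervals if necessary''), and you handle the boundary range of small $j$ where part of the window lies below time $n^{2\neweps}$ via \cref{lem:dont hit too soon}, a point the paper does not address explicitly.
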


\begin{proof}
We assume that $C/\beta_n$ is larger than $\eps$, otherwise $I_{\max}$ is smaller than $\eps$ and there's nothing to prove. We also assume wlog that $\epsilon<1$. Decompose $[0,I_{\max}]$ into intervals of size $\eps$ by writing $[0,I_{\max}] = \cup_{i \leq \lfloor I_{\max}/\eps \rfloor} [i\eps, \min\{I_{\max}, (i+1)\eps\}]$. Fix $j$, write $\Pb_d$ in place of $\Pb_{d,j}$ and
denote by $\Pb_u$ the uniform measure on $[0,I_{\max}]$. Note that every interval $I \subset [0, I_{\max}]$ of length $\eps$ can be identified with the union of at most $k$ connected subsets of $\Tkkn$ such that the sum of their lengths is $\epsilon \beta_n \sqrt{n}$ (which is much larger than $n^{3\epsilon}$). By discarding those that are of length less than $n^{3\epsilon}$ we can apply \cref{cor:main stick breaking ingredient}(2) to the remaining subsets (by decomposing them and $I$ into smaller intervals if necessary) to deduce that
\begin{equation*}
    \prstart{I}{d} = \frac{\eps}{I_{\max}}(1+o(1)), \quad \prstart{I}{u} = \frac{\eps}{I_{\max}}.
\end{equation*}
Now take $N$ large enough such that the $o(1)$ error is bounded by $\eps$.
Then, take some set $A$ in $[0,I_{\max}]$ and let $I_A$ be the set of intervals of the form $[i\eps, (i+1)\eps)$ intersecting $A$.
Now, we have that
\begin{equation*}
|I_A|\frac{\eps}{I_{\max}} - \eps \leq |I_A|\frac{\eps}{I_{\max}}(1+o(1))  \leq \prstart{I_A}{d} \leq |I_A|\frac{\eps}{I_{\max}}(1+o(1)) \leq |I_A|\frac{\eps}{I_{\max}} + \eps 	
\end{equation*}
 and
 \begin{align*}
 	\prstart{A}{d} \leq \prstart{I_A}{d} \leq |I_A|\frac{\eps}{I_{\max}} + \eps \leq \prstart{I_A}{u} + \eps \leq \prstart{A^\eps}{u} + \eps, 
 \end{align*}
 \begin{align*}
 	\prstart{A}{u} \leq \prstart{I_A}{u} = |I_A|\frac{\eps}{I_{\max}} \leq \prstart{I_A}{d} + \eps \leq \prstart{A^\eps}{d} + \eps.
 \end{align*}
Hence the Prohorov distance between these two measures is at most $\eps$.
\end{proof}

The main claim of this section is now as follows.

\begin{claim}\label{claim:coupling of stick breaking}
For every $\epsilon>0$ and $k \geq 1$ there exists $N$ such that for all $n>N$ we can couple the stick-breaking process for the CRT and for the $\UST$ such that $|Y_i - Y_i'| \leq \epsilon$ for all $0 \leq i\leq k-1$ and $|Z_i- Z_i'| \leq \epsilon$ for every $0 \leq i\leq k-2$ with probability at least $1-\epsilon$.
\end{claim}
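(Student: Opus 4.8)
The plan is to build the coupling one step of Wilson's algorithm at a time, by induction on the number of steps $m$. Recall that the $m$-th step takes the tree $\T^{(m)}_n$ on $\{v_1,\dots,v_m\}$, runs a LERW from $v_{m+1}$ to it, and outputs $\T^{(m+1)}_n$ together with $Y_m$ (the rescaled running length) and $Z_{m-1}=I(z)$ (the rescaled attachment height of the hitting point $z$). The inductive hypothesis at level $m$ is: for every $\epsilon'>0$ there is $N$ so that for $n>N$ one can couple the first $m$ steps of the UST and CRT stick-breaking processes so that, on an event of probability at least $1-\epsilon'$, (i) $|Y_i-Y'_i|\le\epsilon'$ for $i\le m$ and $|Z_i-Z'_i|\le\epsilon'$ for $i\le m-1$, and (ii) $\T^{(m+1)}_n$ is good in the sense of \cref{def:good tree} with $|\T^{(m+1)}_n|\in[C^{-1}\sqrt n,C\sqrt n]$ for a constant $C=C(\epsilon',m)$. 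The base case $m=1$ is treated directly: $\T^{(1)}_n$ is trivially good, $Z_0=Z'_0=0$, the single stick has tail $\exp\{-x^2/2\}(1+o(1))$ by \cref{cor:main stick breaking ingredient}(1) (with $|\T^{(1)}_n|=1$), which matches the CRT tail of \cref{prop:CRT stick probs}, so a quantile coupling makes $|Y_1-Y'_1|$ small, and then \cref{cor:main stick breaking ingredient}(3) together with \cref{lem: size of stick} gives (ii).

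For the inductive step $m\to m+1$ we condition on $\T^{(m+1)}_n=T$ for a good tree $T$ with $|T|\asymp\sqrt n$ and on the CRT data $(Y'_i,Z'_i)_{i\le m}$ with $|Y_i-Y'_i|,|Z_i-Z'_i|$ small. The point is that \cref{cor:main stick breaking ingredient} and \cref{cl: discrete and uniform are close} are uniform over all such $T$, so the conditional law of the step-$(m+1)$ data depends on the past only through the fact that $T$ is a good tree of size $\asymp\sqrt n$ and through $Y_m=|T|/(\beta_n\sqrt n)+O(n^{-1/2})$. We first couple the new stick length: by \cref{cor:main stick breaking ingredient}(1) the tail of $Y_{m+1}-Y_m$ is $\exp\{-\tfrac12((x+Y_m)^2-Y_m^2)\}(1+o_C(1))$, while by \cref{prop:CRT stick probs} the tail of $Y'_{m+1}-Y'_m$ is exactly $\exp\{-\tfrac12((x+Y'_m)^2-(Y'_m)^2)\}$. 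Since $L\mapsto\exp\{-\tfrac12((x+L)^2-L^2)\}$ is $1$-Lipschitz in $L$ uniformly in $x\ge 0$, and $Y_m,Y'_m$ lie in a fixed compact subinterval $[L_1,L_2]\subset(0,\infty)$ by \cref{lem: size of stick} and the inductive hypothesis, the two tails differ by at most $|Y_m-Y'_m|+o(1)$ uniformly in $x$. Hence, once $|Y_m-Y'_m|$ is below the threshold $\eta(\cdot,L_1,L_2)$ furnished by \cref{lem: proh close rv} and $n$ is large, that lemma (in its version uniform over $L\in[L_1,L_2]$) couples $Y_{m+1}-Y_m$ with $Y'_{m+1}-Y'_m$ to within any prescribed precision with failure probability below it, which with $|Y_m-Y'_m|$ controls $|Y_{m+1}-Y'_{m+1}|$.

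Next we couple the attachment height $Z_m=I(z)$ \emph{conditionally on (the dyadic-scale interval of) the LERW length}, which reproduces the conditional independence of stick length and attachment point in the CRT. By \cref{lem:dont hit too soon} and \cref{cor:main stick breaking ingredient}(1) the LERW length lies in $[n^{2\neweps},O(\beta_n\sqrt n)]$ with probability $1-o(1)$, i.e.\ in the range of intervals handled by \cref{cl: discrete and uniform are close}; on that event, conditioning on the interval containing the hitting time, \cref{cl: discrete and uniform are close} shows the law of $Z_m$ is within Prohorov distance $\epsilon'$ of the uniform law on $[0,I_{\max}]=[0,Y_m]$, and since a mixture of such laws is still within $\epsilon'$ of uniform, the same holds without that last conditioning. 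By the standard coupling characterization of the Prohorov distance (Strassen) we couple $Z_m$ with a uniform $U$ on $[0,Y_m]$ so that $|Z_m-U|\le\epsilon'$ with probability $\ge1-\epsilon'$, and by \cref{cl: two uniforms are close} we couple $U$ with $Z'_m\sim\mathrm{Uniform}([0,Y'_m))$ so that $|U-Z'_m|\le|Y_m-Y'_m|$; this controls $|Z_m-Z'_m|$. Finally \cref{cor:main stick breaking ingredient}(3) gives that $\T^{(m+2)}_n$ stays good with probability $1-o_C(1)$, and $|\T^{(m+2)}_n|=Y_{m+1}\beta_n\sqrt n+O(1)$ together with $Y_{m+1}\approx Y'_{m+1}$, \cref{lem: size of stick}, and $\sqrt\delta\le\beta_n\le1$ keeps it in the required size window, closing the induction.

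Applying the induction with $m=k-1$ and a union bound over the $O(k)$ per-step failure events — the $Y$-coupling, the $Z$-coupling, loss of goodness, the tree leaving its size window, and the LERW length leaving the good range — the total failure probability is $O(\epsilon')+o(1)$, and on its complement the errors accumulate additively so $|Y_i-Y'_i|$ and $|Z_i-Z'_i|$ are at most $O(k)$ times the per-step error; choosing $\epsilon'$ a small enough multiple of $\epsilon/k$ and then $n$ large gives the claim. The one point requiring genuine care is the choice of the per-step error budget: \cref{lem: proh close rv} couples the increments well only when their tails agree to within a threshold $\eta$ that may be far smaller than the target precision, so one fixes the precision needed at the last step and propagates the requirement backwards through the $k-1$ steps (each step coupled accurately enough for the next step's hypotheses to hold); since $k$ is fixed and all the $\eta$'s and Lipschitz constants are fixed, this terminates with a positive per-step budget. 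Apart from this bookkeeping, which is routine, the substantive content is already contained in \cref{cor:main stick breaking ingredient} and \cref{cl: discrete and uniform are close}.
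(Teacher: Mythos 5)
Your proposal follows the same inductive structure as the paper's proof, uses the same ingredients (\cref{cor:main stick breaking ingredient}, \cref{cl: discrete and uniform are close}, \cref{lem: proh close rv}, \cref{cl: two uniforms are close}, \cref{lem: size of stick}, \cref{prop:CRT stick probs}), and couples the $Y$-increment and then $Z$ conditionally on the hitting-time interval, just as the paper does. The bookkeeping you describe (propagating the error budget backwards so that each step's coupling is tight enough for the threshold $\eta$ required at the next) is the mirror image of the paper's forward choice of $\zeta$; both are fine since $k$ is fixed and \cref{lem: size of stick} keeps $Y_m$ in a compact subinterval of $(0,\infty)$.

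One point needs to be stated more carefully. In the sentence about the attachment height you first say the coupling is done ``conditionally on the dyadic-scale interval of the LERW length,'' then argue ``since a mixture of such laws is still within $\eps'$ of uniform, the same holds without that last conditioning,'' and then invoke Strassen. The mixture claim is true, but if Strassen is applied to the \emph{unconditional} law of $Z_m$ and the resulting $(Z_m,U)$ is sampled independently of $Y_{m+1}$, the reconstructed pair $(Y_{m+1},Z_m)$ no longer has the correct joint law, since $Z_m$ is not independent of $Y_{m+1}$ for the LERW. What makes the step work is precisely the uniformity in \cref{cl: discrete and uniform are close}: one must couple $Z_m$ with $U\sim\mathrm{Uniform}([0,Y_m])$ \emph{given} the interval index $j$, and the Prohorov bound holds uniformly over $j$, so the coupling error is controlled no matter which interval was realized; meanwhile $Z'_m$ is independent of $Y'_{m+1}$ given $Y'_m$, so the $U\to Z'_m$ step is unconditional. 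This is exactly how the paper handles it (coupling $M_{k-1}$ with $M'_{k-1}$ first, then sampling $Z_{k-2}$ from its conditional law given $M_{k-1}$). So drop the de-conditioning remark, or make explicit that the Strassen coupling is performed interval-by-interval; with that repair the argument is the one in the paper.
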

\begin{proof}
We prove the claim by induction. Clearly when $k=1$ (i.e. for $T^{(1)}_n$) the statement holds trivially since the tree is a single point and $Y_0 = Y_0' = 0$ by construction. Moreover since a tree consisting of a single vertex is always good and since $Z_0 = Z_0' = 0$, it also follows directly from \cref{lem: proh close rv}, \cref{prop:CRT stick probs} and \cref{cor:main stick breaking ingredient} that the statement holds for $k=2$ as well. 

Now fix $k\geq 3$ and suppose that the claim holds for all $m<k$. We will now show that the claim holds also for $k$. That is, suppose that for every $\eps>0$ there exists $N$ large enough such that for all $n>N$ we can successfully couple $\Tkkn$ with the CRT. It suffices to show that for every $\eps>0$, there exists $0 < \zeta < \epsilon/8$ such that if we condition on a successful coupling of the previous step with parameter $\zeta$, then we can couple $(Y_{k-1}, Z_{k-2})$ with $(Y'_{k-1}, Z'_{k-2})$ such that $|Y_{k-1} - Y'_{k-1}| < \eps$ and $|Z_{k-2} - Z'_{k-2}| < \eps$ with probability at least $1-\epsilon/2$.

To this end, let $\zeta>0$ (its precise value will be chosen later) and suppose we have successfully coupled $(Y_i)_{i \leq k-2}$ and $(Z_i)_{i \leq k-3}$ with $(Y_i')_{i \leq k-2}$ and $(Z_i')_{i \leq k-3}$ as in the statement of the claim with parameter $\zeta$.
Note that it follows directly by iterating Point 3 of \cref{cor:main stick breaking ingredient} that $\Tkkn$ is good with probability at least $1-\epsilon/3$ for all sufficiently large $n$. Moreover, it therefore also follows from \cref{lem: size of stick} that $0<g(\eps) \leq Y_{k-2} \leq  f(\eps)$ with probability at least $1-\epsilon/3$, for some functions $f$ and $g$ where $g(\eps)>0$ and $f(\epsilon)<\infty$. Hence we can assume that we coupled $\Tkkn$ with the CRT, that $\Tkkn$ is good and that $g(\eps) \leq Y_{k-2} \leq f(\epsilon)$.

Under the coupling, we can write  $\frac{|\Tkkn|}{\beta_n \sqrt{n}} = Y_{k-2} = Y'_{k-2} + \eps'$ where $\epsilon' \in [-\zeta, \zeta]$. Therefore, since $\Tkkn$ is good, it follows from \cref{cor:main stick breaking ingredient}(1) with $B=f(\epsilon)$ that for any $C \in (0, \infty)$,
\begin{align*}
        \prcond{Y_{k-1} - Y_{k-2} > C }{\Tkkn}{} &= \exp\left\{- \frac{(C + Y'_{k-2} + \eps')^2-(Y'_{k-2} + \eps')^2}{2} \right\} + o(1) \\
        &=\exp\left\{- \frac{(C + Y'_{k-2})^2-(Y'_{k-2})^2}{2} \right\} \left(1+e^{-C\epsilon'} - 1 \right) + o(1),
        \end{align*}
so
\begin{align}\label{eq: distance between Ys}
\left| \prcond{Y_{k-1} - Y_{k-2} > C }{\Tkkn}{} - \exp\left\{- \frac{(C + Y'_{k-2})^2-(Y'_{k-2})^2}{2} \right\} \right| \leq |1-e^{-C\epsilon'}|e^{\frac{-C^2}{2}} + o(1) \leq C\zeta e^{\frac{-C^2}{2}} + o(1).
     \end{align}
The first term on the right hand side goes to $0$ as $\zeta\to 0$ uniformly over $C>0$.
By \cref{lem: proh close rv}, there exists $\eta$ depending on $\eps$ such that if $f(\eps) \leq Y_{k-2} \leq g(\eps)$ and the right-hand side of \eqref{eq: distance between Ys} is smaller than $\eta$, then we can couple $Y_{k-1} - Y_{k-2}$ and $Y'_{k-1} - Y'_{k-2}$ such that the probability that they are $\eps/4$ close to one another is at least $1-\eps/4$. When this happens, by the triangle inequality, we have that $|Y_{k-1} - Y'_{k-1}| < \eps / 2$. We therefore choose $\zeta$ small enough (and smaller than $\eps / 8$) and $n$ large enough such that the right-hand side is smaller than this $\eta$.

However, we note that $Z_{k-2}$ is not independent of $Y_{k-1}$ and we are required to couple the pair $(Y_{k-1}, Z_{k-2})$ with $(Y'_{k-1}, Z'_{k-2})$.
To do so, we will decompose $\RR^+$ into intervals of length $\eps/2$, that is, we write $\RR^+ = \bigcup_{j=0}^{\infty} I_j$ where $I_j = [j\eps/2, (j+1)\eps/2)$. Let $M_{k-1}$ (respectively $M'_{k-1}$) be the unique $j$ such that $Y_{k-1} \in I_j$ (respectively $Y'_{k-1} \in I_j$).
By \cref{lem: proh close rv} and the discussion above, there exists a coupling of $M_{k-1}$ and $M'_{k-1}$ such that the difference between them is at most $1$ with probability $1-\eps/8$. By \cref{lem: size of stick}, with probability at least $1-\eps/8$ we have that $M'_{k-1} \leq n^{1/2+\eps}-1$ for $n$ large enough (and then so is $M_{k-1}$). 

Then, given $M_{k-1}$, we sample $Z_{k-2}$ according to its conditional law. By \cref{cl: discrete and uniform are close}, when $n$ is large enough, for every $j\leq n^{1/2+\neweps}$, conditionally on $M_{k-1}=j$ we have that the Prohorov distance between $Z_{k-2}$ and a uniform random variable on $[0,Y_{k-2}]$ is at most $\zeta$. By \cref{cl: two uniforms are close}, the Prohorov distance between a uniform random variable on $[0,Y_{k-2}]$ and $Z'_{k-2}$ (recall that, given $Y_{k-2}'$, $Z'_{k-2}$ is independent of $Y'_{k-1}$ and hence of $M'_{k-1}$) is at most $\zeta$.  Therefore, the Prohorov distance between $Z'_{k-2}$ and $Z_{k-2}$ conditionally on $M_{k-1} = j$ is at most $2\zeta$. Since $\zeta < \eps/8$, it follows that we can couple the pairs $(Y_{k-1}, Z_{k-2})$ and $(Y'_{k-1}, Z'_{k-2})$ such that $|Y_{k-1} - Y'_{k-1}| < \eps$ and $|Z'_{k-2} - Z_{k-2}| < \eps$ with probability at least $1 - \eps/2$, as required.
\end{proof}

\begin{corollary}
For every $\epsilon>0$ and $k \geq 1$ there exists $N$ such that for all $n>N$ we can couple the stick-breaking process for the CRT and for the $\UST$ such that, with probability at least $1-\epsilon$, it holds for all $0 \leq i,j \leq k$ that
\[
|d(y_i, y_j) - d'(y_i', y_j')| \leq \epsilon.
\]

\end{corollary}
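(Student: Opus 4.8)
The plan is to obtain this corollary immediately from \cref{claim:coupling of stick breaking} and \cref{prop:stick breaking close}. The first supplies, for any target precision $\epsilon'$, a coupling of the two stick-breaking inputs under which $|Y_i-Y_i'|$ and $|Z_i-Z_i'|$ are all at most $\epsilon'$ with probability at least $1-\epsilon'$; the second converts such pointwise closeness of the inputs into closeness $|d(y_i,y_j)-d'(y_i',y_j')|\le 2k\epsilon'$ of the pairwise tree distances — provided its non-degeneracy hypothesis (ii) holds, i.e.\ every attachment point $z_i$ is at distance at least $3\epsilon'$ from every branch point $y_j$ in the discrete process. Since \cref{prop:stick breaking close} will be applied with parameter $\epsilon'$, it suffices to take $2k\epsilon'\le\epsilon$; the only genuine work is arranging hypothesis (ii).

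First I would fix $\epsilon$ and $k$ and, using \cref{lem: size of stick} (applied to $Y_1'$ and $Y_k'$ together with the monotonicity $Y_1'\le Y_i'\le Y_k'$), choose a constant $C<\infty$ so that all of $Y_0',\dots,Y_k'$ lie in $[C^{-1},C]$ with probability at least $1-\epsilon/8$; call this event $\mathcal E_1$. On $\mathcal E_1$, conditionally on $(Y_j')_{j\le k}$, each $Z_i'$ is uniform on $[0,Y_i')\subseteq[0,C]$, so the probability that $Z_i'$ lands within $5\epsilon'$ of one of the finitely many points $Y_0',\dots,Y_k'$ is $O((k+1)C\epsilon')$; a union bound over $i\le k-1$ then shows that the event $\mathcal E_2=\{\,|Z_i'-Y_j'|\ge 5\epsilon'\text{ for all }i\le k-1,\ j\le k\,\}$ satisfies $\pr{\mathcal E_2^c\cap\mathcal E_1}\le\epsilon/8$ once $\epsilon'$ is small enough in terms of $\epsilon,k,C$. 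This is the only place an estimate is needed, and it is just a one-dimensional small-ball bound for uniform random variables. Concretely I would set $\epsilon'=\min\{\epsilon/(2k),\ c\epsilon/(k^2C),\ \epsilon/8\}$ for a suitable absolute constant $c$, choosing $C$ first (depending on $\epsilon,k$) and then $\epsilon'$.

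Next I would invoke \cref{claim:coupling of stick breaking} with parameter $\epsilon'$ and with $k+1$ in place of $k$ (so that $Y_0,\dots,Y_k$ and $Z_0,\dots,Z_{k-1}$ are all controlled, as \cref{prop:stick breaking close} requires): there is $N$ such that for all $n>N$ one has a coupling under which the event $\mathcal E_3=\{|Y_i-Y_i'|\le\epsilon'\ \forall i\le k,\ |Z_i-Z_i'|\le\epsilon'\ \forall i\le k-1\}$ has probability at least $1-\epsilon'\ge 1-\epsilon/8$. On $\mathcal E_1\cap\mathcal E_2\cap\mathcal E_3$, of probability at least $1-\epsilon/2>1-\epsilon$, the triangle inequality gives
\[
|Z_i-Y_j|\ \ge\ |Z_i'-Y_j'|-|Z_i-Z_i'|-|Y_j-Y_j'|\ \ge\ 5\epsilon'-\epsilon'-\epsilon'\ =\ 3\epsilon'
\]
for all $i\le k-1,\ j\le k$, so hypotheses (i) and (ii) of \cref{prop:stick breaking close} both hold with parameter $\epsilon'$, and that proposition yields $|d(y_i,y_j)-d'(y_i',y_j')|\le 2k\epsilon'\le\epsilon$ for all $0\le i,j\le k$. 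The main (in fact only real) obstacle is precisely the transfer of hypothesis (ii): it is a statement about the discrete process, to which we have no direct access, so it must be established on the CRT side — where it is elementary — and then pushed through the coupling, which is why one asks for separation $5\epsilon'$ on the CRT side in order to end up with $3\epsilon'$ after absorbing two $\epsilon'$ coupling errors.
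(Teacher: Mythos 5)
Your proposal is correct and follows essentially the same route as the paper's proof: invoke \cref{claim:coupling of stick breaking} to obtain hypothesis (i) of \cref{prop:stick breaking close}, use a small-ball estimate for the CRT attachment points to obtain hypothesis (ii), and conclude via \cref{prop:stick breaking close}. You are slightly more explicit than the paper on two bookkeeping points — applying \cref{claim:coupling of stick breaking} with $k+1$ in place of $k$ so that $Y_k$ and $Z_{k-1}$ are controlled, and demanding $5\epsilon'$-separation on the CRT side so that after absorbing the two $\epsilon'$ coupling errors the unprimed (UST) variables satisfy the literal $3\epsilon'$-separation that hypothesis (ii) asks for — whereas the paper verifies the $3\eta$-separation directly on the CRT side and tacitly applies \cref{prop:stick breaking close} with the roles of the primed and unprimed sequences interchanged (which is valid since its proof is symmetric in the two processes). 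Both readings are sound, and the remaining details (the choice of $C$ via \cref{lem: size of stick} and the choice of $\epsilon'$ small relative to $\epsilon$, $k$, $C$) match.
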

\begin{proof}
Take $\eta>0$ and $k \geq 1$. We verify that there is a coupling such that each of the conditions of \cref{prop:stick breaking close} hold with high probability.

For the first condition note that, by \cref{claim:coupling of stick breaking}, we can couple the stick-breaking process for the CRT and for the $\UST$ such that $|Y_i - Y_i'| \leq \eta$ for all $0 \leq i\leq k$ and $|Z_i- Z_i'| \leq \eta$ for every $0 \leq i\leq k-1$ with probability at least $1-\eta$ for all sufficiently large $n$. For the second condition, note that it follows from \cref{prop:CRT stick breaking} that we can choose $\delta=\delta(\eta, k)>0$ such that $|Z_i' - Y_j'| \geq 3\eta$ for all $i \leq k-1,j \leq k$ with probability at least $1-\delta$, and such that $\delta \downarrow 0$ as $\eta \downarrow 0$.

Therefore, it follows from \cref{prop:stick breaking close} that under this coupling, it holds with probability at least $1-\eta-\delta$ that $\sup_{1 \leq i,j \leq k}|d(y_i, y_j) - d'(y_i', y_j')| \leq 2k\eta$. Given $\epsilon>0$, we can therefore choose $\eta>0$ small enough that $2k\eta+\delta<\epsilon$ in order to deduce the claim as stated.
\end{proof}

\begin{proof}[Proof of \cref{thm:pr04}]
For $k \geq 1$, let $D^{(k)}_n$ denote the matrix of distances between $k$ uniform points in $\UST (G_n)$. Let $D^{(k)}$ denote the analogous matrix for the CRT.

We showed that for any $k\geq 1$ and any $\epsilon>0$, we can couple $\Tkn$ and the CRT so that $||D^{(k)}_n - D^{(k)}||_{\infty} < \epsilon$ with probability at least $1-\epsilon$ for all sufficiently large $n$. Thus we have that $||D^{(k)}_n - D^{(k)}||_{\infty}$ converges to $0$ in probability and therefore $D^{(k)}_n$ converges to $D^{(k)}$ in distribution, which is equivalent to the statement of \cref{thm:pr04}. 
\end{proof}

\subsection{Lower mass bound}

To strengthen the convergence obtained in \cref{thm:pr04} to GHP convergence (and therefore prove \cref{thm:main2}), it suffices to verify that \cref{prop:GP plus LMB gives GHP}(b) holds. Therefore, in our setting, it is enough to show the following.

\begin{claim}[Lower mass bound]\label{clm:lmb}
Let $(G_n)_{n \geq 1}$ be a dense sequence of deterministic graphs satisfying the assumptions of \cref{thm:main2}. For each $n \geq 1$, let $\T_n$ be a uniformly drawn spanning tree of $G_n$. Denote by $d_{\T_n}$ the corresponding graph-distance on $\T_n$ and by $\mu_n$ the uniform probability measure on the vertices of $\T_n$. Then, for any $c>0$ and any $\eta > 0$ there exists some $\eps>0$ such that for all $n\in \NN$
\begin{equation*}
    \pr{\exists v\in \T_n: |B_{\T_n}(v, c\sqrt{n})| \leq \eps n}{} \leq \eta.
\end{equation*}
\end{claim}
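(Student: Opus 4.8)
The plan is to follow the proof of the analogous statement in \cite{ANS2021ghp}, adapting it to the present setting. First, it is enough to treat all sufficiently large $n$: for the finitely many remaining values of $n$ one simply takes $\eps<1/n$, since every ball contains its centre. Second, the one structural feature distinguishing our graphs from those of \cite{ANS2021ghp}, namely the absence of vertex-transitivity, is harmless here, because every loop-erased random walk and capacity estimate that enters the argument --- in particular \cref{prop:RW cap} and \cref{cor:main stick breaking ingredient} --- is already stated uniformly over the (arbitrarily ordered) vertex set and over arbitrary good subtrees, and uses transitivity nowhere.

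The core input is a lower bound on a single ball, uniform over its centre. Fix $v\in V(G_n)$ and recall that the path in $\T_n$ between any two vertices is a loop-erased random walk between them. Since $\{v\}$ is good and $\beta_n\in[\sqrt\delta,1]$, \cref{cor:main stick breaking ingredient}(1) with $T=\{v\}$ and $C=c$ gives, uniformly over $u\ne v$,
\[
\pr{d_{\T_n}(u,v)>c\sqrt n}\;\le\;\pr{d_{\T_n}(u,v)>c\beta_n\sqrt n}\;=\;e^{-c^2/2}+o(1),
\]
so $\E{|B_{\T_n}(v,c\sqrt n)|}\ge 2\eps_0 n$ for a constant $\eps_0=\eps_0(c)>0$. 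Moreover, iterating \cref{cor:main stick breaking ingredient}(1)--(2) along the branches of Wilson's algorithm exactly as in the proof of \cref{thm:pr04}, one sees that, for each fixed $m\ge1$, the $m$-th moment $n^{-m}\E{|B_{\T_n}(v,c\beta_n\sqrt n)|^m}=\pr{d_{\T_n}(U_i,v)\le c\beta_n\sqrt n\text{ for all }i\le m}$ (with $U_1,\dots,U_m$ i.i.d.\ uniform on $V(G_n)$) converges, uniformly in $v$, to $\E{\mu(B_\T(\xi,c))^m}$, with $\xi$ a uniform point of the CRT. As $\mu(B_\T(\xi,c))>0$ almost surely, this yields: for every $\eta'>0$ there exist $\eps>0$ and $n_0$ with $\pr{|B_{\T_n}(v,c\sqrt n)|\le\eps n}\le\eta'$ for all $n\ge n_0$ and all $v\in V(G_n)$.

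It remains to upgrade this to a bound valid simultaneously over all $n$ centres. This is the crux: the per-vertex lower-tail probability above cannot be made smaller than a fixed positive constant (the ball mass has $\Theta(n)$-scale fluctuations, as in the CRT), so a naive union bound over the $n$ vertices is not available, and one must exploit the tree structure to reduce to boundedly many centres, as in \cite{ANS2021ghp}. Run Wilson's algorithm from $k$ initial uniform vertices, producing the skeleton $\Tkn$. By \cref{thm:pr04}, iterating \cref{cor:main stick breaking ingredient}(3), and the standard fact that the CRT is the closed increasing limit of the subtrees spanned by $k$ uniform points, one obtains, for $k$ large and whp: $\Tkn$ is good with $|\Tkn|=\Theta(\sqrt n)$, and every bush of $\T_n$ hanging off $\Tkn$ has diameter at most $(c/4)\sqrt n$. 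Hence every $v$ is within tree-distance $(c/4)\sqrt n$ of some $p_v\in\Tkn$, so $B_{\T_n}(v,c\sqrt n)\supseteq B_{\T_n}(p_v,(3c/4)\sqrt n)$; and since $B_{\T_n}(q,(3c/8)\sqrt n)\subseteq B_{\T_n}(p,(3c/4)\sqrt n)$ whenever $p,q$ lie on a common branch of $\Tkn$ at distance $\le(3c/8)\sqrt n$, it suffices to lower bound $|B_{\T_n}(q,(3c/8)\sqrt n)|$ for $q$ ranging over a $(3c/8)\sqrt n$-net of the $\le 2k$ branches of $\Tkn$ --- a set of $O(k/c)$ vertices. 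Conditioning on the good skeleton $\Tkn$ (for which all the estimates of \cref{cor:main stick breaking ingredient} still apply to Wilson's algorithm continued from it) and applying the conditional form of the per-vertex estimate with $\eta'\ll\eta c/k$ to each of these $O(k/c)$ net points, a union bound over them finishes the proof.

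The step I expect to be the main obstacle is precisely this last upgrade --- specifically, (a) transferring through \cref{thm:pr04} the CRT facts that the maximal mass and diameter of the bushes hanging off a $k$-point subtree tend to $0$ as $k\to\infty$ (which must first be recast in terms of finitely many additional uniform sample points before \cref{thm:pr04} is applicable), and (b) establishing the per-net-point ball lower bound conditionally on the good skeleton $\Tkn$. Both are carried out as in \cite{ANS2021ghp}; the only genuinely new bookkeeping concerns the non-transitivity of $(G_n)_{n\ge1}$.
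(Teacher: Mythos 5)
Your proposal and the paper's proof are ultimately the same route (defer to \cite{ANS2021ghp}), but they sit at very different levels of detail, and it is worth noting what the paper actually does. The paper does not re-derive or even sketch the lower mass bound: it quotes the three hypotheses of the lower-mass-bound theorem from \cite{ANS2021ghp} --- a uniform return-probability sum bound, a sub-$\sqrt n$ mixing time, and transitivity --- and then simply verifies the first two (the first because $p_t(x,x)\le(\delta n)^{-1}$, the second from \cref{claim:mixing time expander}) and observes, citing a remark already in \cite{ANS2021ghp}, that transitivity can be replaced by a bounded degree-ratio (``balanced'') condition by carrying a constant $D=\delta^{-1}$ through the computations; no further details are given because ``they are not illuminating.'' So the paper's proof is a hypothesis-check, not an argument.

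Your proposal instead tries to recount how the \cite{ANS2021ghp} argument actually works (expected single-ball estimate via \cref{cor:main stick breaking ingredient}(1), $m$-th moment convergence, reduction to boundedly many centres along a good Wilson skeleton, union bound over a net), using this paper's own machinery (\cref{prop:RW cap}, \cref{cor:main stick breaking ingredient}) wherever possible. That is legitimate, and the high-level skeleton is the right one, but I want to flag the one place where what you wrote does not quite match the mechanism: you propose to obtain the bush-diameter control (``every bush hanging off $\Tkn$ has diameter at most $(c/4)\sqrt n$'' whp, uniformly over bushes) by transferring CRT facts through \cref{thm:pr04}. But \cref{thm:pr04} is purely finite-dimensional --- it controls distances between $k$ sampled points --- and cannot, by itself, dominate a supremum over all $n$ vertices. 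You implicitly acknowledge this (``which must first be recast in terms of finitely many additional uniform sample points''), but that recasting is not free: in \cite{ANS2021ghp} the bush-diameter control is established directly, by bounding the probability that a fixed vertex is not quickly absorbed into the skeleton when running Wilson's algorithm from it, using LERW/capacity estimates (much like those in \cref{sctn:Wilson analysis}), and then summing over vertices in expectation; it is not obtained as a corollary of fdd convergence. Since you end by saying ``Both are carried out as in \cite{ANS2021ghp},'' the gap is ultimately filled by citation, just as in the paper; but if you wanted the proof to stand on its own feet, replacing the appeal to \cref{thm:pr04} with a direct Wilson-algorithm absorption estimate is the step you would need to write out.

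In short: your argument is correct modulo the same external citation the paper itself makes, and your Steps 1--2 (single-vertex ball expectation and uniform-in-$v$ lower tail) are sound uses of \cref{cor:main stick breaking ingredient}; the paper chooses not to redo any of this and instead just verifies the hypotheses of the imported theorem and notes that relaxing transitivity to ``balanced'' is routine.
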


The results of \cite{ANS2021ghp} establish the lower mass bound for a sequence $(G_n)_{n \geq 1}$ such that $|G_n| = n$ for all $n$, satisfying the following three conditions.

		\begin{enumerate}
			\item There exists $\theta < \infty$ such that $\displaystyle \sup_n \sup_{x \in G_n} \sum_{t=0}^{\sqrt{n}} (t+1) p_t(x,x) \leq \theta$.
			\item There exists $\alpha > 0$ such that $\tmix (G_n) = o (n^{\frac{1}{2} - \alpha})$ as $n \to \infty$.
			\item $G_n$ is transitive for all $n$.
		\end{enumerate}

For a graph sequence satisfying the assumptions of \cref{thm:main2}, note that the second condition is immediately satisfied by \cref{claim:mixing time expander}. The first condition is also satisfied since $p_t(x,x) \leq \frac{1}{\delta n}$ for all $x \in G_n$ and all $t \geq 1$.

However, we would like to relax the condition that $G_n$ is transitive and instead require only that the graphs are \emph{balanced}; that is, that there exists a constant $D< \infty$ such that $$\frac{\max_{v \in G_n} \deg v}{\min_{v \in G_n} \deg v} \leq D$$ for all $n$. As remarked at the end of \cite{ANS2021ghp}, it is straightforward to extend the proof of the lower mass bound to this setting by carrying the constant $D$ through all of the computations in \cite{ANS2021ghp}; we do not provide the details as they are not illuminating. Under the assumptions of \cref{thm:main2}, we can take $D=\delta^{-1}$ so this easily verifies \cref{clm:lmb} and therefore \cref{prop:GP plus LMB gives GHP}(b). Moreover, \cref{thm:pr04} ensures that \cref{prop:GP plus LMB gives GHP}(a) is also fulfilled. \cref{thm:main2} therefore follows directly.

\section{Proof of \cref{thm:main1} and \cref{cor:mainrandom}}\label{sctn:graphon conv}
Recall from the introduction that that a graphon $W$ is \textbf{non-degenerate} if the function
\begin{equation*}
    \deg_W(x) := \int_{[0,1]}W(x,y)dy
\end{equation*}
is defined and strictly positive for every $x\in[0,1]$, and that a non-degenerate graphon $W$ is \textbf{connected} if for every measurable $A\subset[0,1]$ we have that
\begin{equation*}
    \int_{A}\int_{A^C}W(x,y)dxdy > 0.
\end{equation*}

In order to verify \cref{thm:main1} as consequence of \cref{thm:main2}, we need to verify that under the assumptions of \cref{thm:main1}, the graph sequence is an expander sequence and that $\alpha_n \to \alpha_W$.

We start with the first of these. Recall the definition of a $\gamma$-expander sequence is given in \cref{def:expander}.

\begin{claim}\label{claim:graphon is an expander}
Let $W:[0,1]^2 \to [0,1]$ be a connected graphon and let $G_n$ be a sequence of weighted graphs with minimal degree at least $\delta n$ converging in cut-distance to $W$. Then there exists $\gamma = \gamma (W, \delta)>0$ such $(G_n)_{n \geq 1}$ is a $\gamma$-expander sequence.
\end{claim}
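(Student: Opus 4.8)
The plan is to bound the cut $w(U, V(G_n)\setminus U)$ from below by splitting into two regimes according to the size of $U$: I would handle small sets using the minimum-degree hypothesis, and sets of linear size using the connectivity of $W$ (via \cref{lem:connectivity}) transported through the cut-distance. Since swapping $U$ with $U^C := V(G_n)\setminus U$ changes neither $w(U,U^C)$ nor $|U|(n-|U|)$, it suffices to treat $U$ with $1 \le |U| \le n/2$; I would also record at the outset that every vertex of $G_n$ has degree at most $n-1$, so $\delta n \le n-1$ and hence $\delta < 1$.

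For the small sets, say $|U| \le \delta n/2$, I would argue directly: for each $v \in U$ the weight of edges from $v$ into $U$ is at most $|U|-1$, so $w(v, U^C) = \deg(v) - w(v, U\setminus\{v\}) \ge \delta n - (|U|-1) \ge \delta n/2$, and summing over $v \in U$ gives $w(U,U^C) \ge (\delta n/2)|U| \ge (\delta/2)|U|(n-|U|)$. This already settles this regime, for every $n$, with constant $\delta/2$.

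For the linear-size sets, say $\delta n/2 \le |U| \le n/2$, I would pass to the graphon side. Fix an ordering of $V(G_n)$, let $W_{G_n}$ be the associated graphon representation, and let $A_U \subseteq [0,1]$ be the union of the intervals $I_i$ indexed by the vertices of $U$, so that $\mu(A_U) = |U|/n \in [\delta/2, 1/2]$ and, straight from the definition of $W_{G_n}$, $\int_{A_U}\int_{A_U^C} W_{G_n} = w(U,U^C)/n^2$. Choosing a measure-preserving automorphism $\varphi_n$ of $[0,1]$ with $\|W_{G_n}^{\varphi_n} - W\|_\square \le \delta_\square(W_{G_n},W) + 1/n =: \epsilon_n \to 0$ and setting $B := \varphi_n^{-1}(A_U)$ (so $\mu(B) = |U|/n$), a change of variables gives $\int_B\int_{B^C} W_{G_n}^{\varphi_n} = w(U,U^C)/n^2$, while testing the cut-norm on the pair $(S,T) = (B,B^C)$ gives $\bigl|\int_B\int_{B^C}(W_{G_n}^{\varphi_n} - W)\bigr| \le \epsilon_n$. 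Since $\delta/2 < 1/2$, \cref{lem:connectivity} applied with $\alpha = \delta/2$ supplies a constant $\beta = \beta(W,\delta/2) > 0$ with $\int_B\int_{B^C} W > \beta$, whence $w(U,U^C)/n^2 > \beta - \epsilon_n$; for $n$ large enough that $\epsilon_n < \beta/2$ this yields $w(U,U^C) > (\beta/2)n^2 \ge (\beta/2)|U|(n-|U|)$. Combining the two regimes, $G_n$ is a $\gamma_0$-expander with $\gamma_0 := \min\{\delta/2,\beta/2\}$ (depending only on $W$ and $\delta$) for all $n$ beyond some $N = N(W,\delta)$; for the finitely many remaining $n$, connectedness of $G_n$ makes each $\min_{\emptyset \ne U \subsetneq V(G_n)} w(U,U^C)/(|U|(n-|U|))$ strictly positive, and one takes $\gamma$ to be the minimum of $\gamma_0$ and these finitely many values.

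I do not expect a serious obstacle here; the one point needing care is the passage from the combinatorial cut $w(U,U^C)$ to an evaluation of the cut-norm, carried out by realigning $G_n$ against $W$ with the measure-preserving automorphism $\varphi_n$, together with the observation that \cref{lem:connectivity} only produces a positive bound once $\mu(A_U)$ is bounded away from $0$ — which is precisely why the small-set regime must be isolated and controlled separately using the density (minimum-degree) hypothesis.
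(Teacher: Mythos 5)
Your proof is correct and follows essentially the same two-case decomposition as the paper: the threshold $|U| \le \delta n/2$ handled by the minimum-degree bound, and $|U| \ge \delta n/2$ handled by transporting \cref{lem:connectivity} across the cut-distance. You are somewhat more explicit than the paper about the measure-preserving realignment $\varphi_n$, the reduction to $|U| \le n/2$ by reflection, and the finitely many small $n$, but these are refinements rather than a different route.
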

\begin{proof}
Take $U \subset G_n$. We split the proof into two cases depending on whether $|U| \geq \frac{1}{2}\delta n$ or not.

\textbf{Case 1: $|U| \leq \frac{1}{2}\delta n$.} Since $G_n$ has minimal degree at least $\delta n$ and the maximal weight of every edge is $1$, it follows that there is a total weight of $\frac{1}{2}\delta n$ emanating from every vertex leading to $V(G) \setminus U$, so that 
\[
w(U, V(G) \setminus U) \geq \frac{1}{2}|U|\delta n \geq \frac{1}{2}\delta|U| (V(G) - |U|).
\]

\textbf{Case 2: $|U| > \frac{1}{2}\delta n$.} By \cref{lem:connectivity}, there exists a constant $\beta = \beta(W,\delta)$ such that for every set $U$ with $\frac{\delta}{2} \leq \mu(U) \leq 1/2$ we have
\begin{equation*}
    \int_{U}\int_{U^C} W(x,y) > \beta.
\end{equation*}
In particular, since $G_n$ converges to $W$ this implies that there exists $N>\infty$ such that for all such $U$ and all $n \geq N$,
\begin{equation*}
    \frac{1}{n^2} w(U, V(G) \setminus U) = \int_{U}\int_{U^C} W_n(x,y) > \frac{\beta}{2}.
\end{equation*}
Since $|U|(V(G)-|U|) \leq n^2$ trivially, this implies that $e(U, V(G) \setminus U) \geq \frac{\beta}{2}|U|(V(G)-|U|)$.
\end{proof}

We now turn to verifying the convergence $\alpha_n \to \alpha_W$. Recall that in \cref{sctn:RW estimates} we defined
\begin{equation*}
\alpha_n = \frac{n\estart{\Capp_M (X[0, n^{\neweps/2}))}{\pi}}{Mn^{\neweps/2}}.
\end{equation*}
where $X$ is a RW on $G_n$ started from stationarity, and showed that under some assumptions, the sequence 
\begin{equation}\label{eqn:GHP conv}
\left(\UST (G_n), \frac{\sqrt{\alpha_n}d_n}{\sqrt{n}}, \mu_n\right) \overset{(d)}{\to} (\T, d, \mu)
\end{equation}
with respect to the GHP topology. We also let $U_{\pi_n}$ denote a random stationary vertex of $G_n$, and define 
\[
\tilde{\alpha}_n = n\E{\pi_n(U_{\pi_n})}.
\]
In fact it is more convenient to deal with $\tilde{\alpha}_n$ rather than $\alpha_n$. This is sufficient as we show in the following claim (we write the proof for completeness, but really it follows directly just from linearity of expectation and \cref{cor:closeness UB deter}).

\begin{claim}\label{claim:alpha n equivalence}
Let $(G_n)_{n \geq 0}$ be a sequence of weighted graphs on $n$ vertices with minimal degree $\delta n$. Let $\alpha_n$ and $\tilde{\alpha}_n$ be defined as above. Then $\alpha_n = \tilde{\alpha}_n(1+o(1))$ as $n \to \infty$.
\end{claim}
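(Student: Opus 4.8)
The plan is to work directly from the definitions: with $S = n^{\neweps/2}$ we have $\alpha_n = \frac{n}{MS}\,\estart{\Capp_M(X[0,S))}{\pi}$, while by the definition of $U_{\pi_n}$ we have $\tilde{\alpha}_n = n\,\E{\pi_n(U_{\pi_n})} = n\sum_v \pi_n(v)^2$, so the claim is equivalent to the two-sided estimate $\estart{\Capp_M(X[0,S))}{\pi} = \tfrac{MS}{n}\tilde{\alpha}_n(1+o(1))$. First I would record two elementary facts about the walk on $G_n$. Since every degree is at least $\delta n$ and every edge-weight is at most $1$, the one-step transition density satisfies $p_1(u,v)\le \tfrac{1}{\delta n}$ for all $u,v$, and conditioning on the first $r-1$ steps gives $p_r(u,v)\le \tfrac{1}{\delta n}$ for every $r\ge 1$ and all $u,v$; consequently $\sum_v\pi_n(v)^2\le \max_v\pi_n(v)\le \tfrac{1}{\delta n}$, so that $1\le\tilde{\alpha}_n\le\delta^{-1}$ (the lower bound being Cauchy--Schwarz). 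The point of the bound $\tilde\alpha_n\le\delta^{-1}$ is that it lets me treat $\tilde\alpha_n$ as a $\Theta(1)$ quantity when gathering error terms.

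The core step is a coincidence-counting argument. Let $X$ be the $\pi$-started walk whose trace is $A := X[0,S)$ and let $X'$ be an independent $\pi$-started walk; then $\Capp_M(A) = \prstart{\tau_A\le M}{\pi}$, so after taking $\estart{\cdot}{\pi}$ we get $\estart{\Capp_M(A)}{\pi} = \pr{\exists\, 0\le t\le M,\ 0\le s<S:\ X'_t = X_s}$. Introducing the coincidence count $Z = \sum_{t=0}^{M}\sum_{s=0}^{S-1}\mathbbm{1}\{X'_t = X_s\}$, we have $\{Z\ge 1\} = \{\tau_A\le M\}$ exactly, hence $\estart{\Capp_M(A)}{\pi} = \pr{Z\ge 1}$. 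Since $X_s\sim\pi_n$ and $X'_t\sim\pi_n$ independently, $\E Z = (M+1)S\sum_v\pi_n(v)^2 = \tfrac{(M+1)S}{n}\tilde{\alpha}_n$. The upper bound is then immediate from $\pr{Z\ge 1}\le \E Z$, giving $\alpha_n\le \tfrac{M+1}{M}\tilde{\alpha}_n = \tilde{\alpha}_n(1+o(1))$.

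For the matching lower bound I would use the elementary inequality $\mathbbm{1}\{Z\ge 1\}\ge Z - \binom{Z}{2}$, valid for non-negative integers, which gives $\pr{Z\ge 1}\ge \E Z - \tfrac12\E[Z(Z-1)]$, so it suffices to show $\E[Z(Z-1)] = o\!\big(\tfrac{MS}{n}\big)$ (which is then also $o\!\big(\tfrac{MS}{n}\tilde\alpha_n\big)$ since $\tilde\alpha_n\ge 1$). Expanding, $\E[Z(Z-1)] = \sum \pr{X'_t = X_s,\ X'_{t'} = X_{s'}}$ over ordered pairs $(s,t)\ne(s',t')$. Ordering the two time-points so that the earlier position of $X'$ is conditioned on first, and similarly for the two indices in $X$, each such probability factorises into $\pi_n(\cdot)$'s and transition densities $p_r(\cdot,\cdot)$ over strictly positive time increments; applying $p_r(\cdot,\cdot)\le\tfrac{1}{\delta n}$ for $r\ge 1$ together with $\sum_v\pi_n(v)^2\le\tfrac{1}{\delta n}$ bounds every term by $\tfrac{1}{\delta^2 n^2}$, including the degenerate cases $s=s'$ (where the term is $\sum_v\pi_n(v)^2 p_{|t-t'|}(v,v)$ with $|t-t'|\ge 1$) and $t=t'$. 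As there are at most $(2MS)^2$ such pairs, $\E[Z(Z-1)]\le \tfrac{4(MS)^2}{\delta^2 n^2}$, and since $MS = n^{3\neweps/2} = o(n)$ this is indeed $o\!\big(\tfrac{MS}{n}\big)$. Hence $\pr{Z\ge1}\ge \tfrac{(M+1)S}{n}\tilde\alpha_n - o\!\big(\tfrac{MS}{n}\tilde\alpha_n\big) = \tfrac{MS}{n}\tilde\alpha_n(1-o(1))$, i.e. $\alpha_n\ge\tilde\alpha_n(1-o(1))$; combined with the upper bound this gives $\alpha_n = \tilde\alpha_n(1+o(1))$.

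The only delicate point is the bookkeeping in the pair-sum for $\E[Z(Z-1)]$: one must ensure that in every configuration of the indices $s,s',t,t'$ a genuine positive-time transition step is produced so that the $\tfrac{1}{\delta n}$ bound applies, and treat the coincidences $s=s'$ or $t=t'$ on their own. Equivalently, this estimate is precisely what \cref{cor:closeness UB deter} delivers for the singletons $\{X_s\},\{X_{s'}\}$ (with the handful of coincident pairs bounded by hand), combined with linearity of expectation over $s,s'$ — the route indicated in the statement — and either way it is routine.
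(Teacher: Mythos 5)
Your proof is correct and takes essentially the same route as the paper: both are Bonferroni/inclusion--exclusion arguments on collision counts, controlled via the uniform transition-kernel bound $p_r(u,v)\le(\delta n)^{-1}$ and $\sum_v\pi_n(v)^2\le(\delta n)^{-1}$. The only difference is presentational — you merge the paper's two staged comparisons (path to sum-over-points, then $\Capp_M$ to $M\pi$) into a single second-moment bound on the coincidence count $Z$, but the underlying estimate and the role of \cref{cor:closeness UB deter} are identical.
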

\begin{proof}
By the Bonferroni inequalities and linearity of expectation, and letting $Z$ denote an independent RW started from stationarity, we can write (recalling also from \cref{lem:cap union bound} that $\Capp_M(U_{\pi}) \geq \frac{M\delta}{2n}$ deterministically):
\begin{align*}
 \left|\E{\Capp_M(X[0,n^{\eps/2}))} - n^{\eps/2} \E{\Capp_M(U_{\pi})}\right| &= \left|\E{\Capp_M(X[0,n^{\eps/2}))} - \sum_{i=0}^{n^{\eps/2}} \E{\Capp_M(X_{i})}\right| \\
&\leq \prstart{\exists 0 \leq t_1 < t_2 \leq M-1:Z_{t_1} \cap X[0,n^{\eps/2}) \neq \emptyset \text{ and } Z_{t_2} \cap X[0,n^{\eps/2}) \neq \emptyset}{\pi} \\
&\leq \left(\frac{Mn^{\eps/2}}{\delta n}\right)^2 \leq \frac{2Mn^{\eps/2}}{\delta^3 n} \cdot n^{\eps/2} \E{\Capp_M(U_{\pi})}.
\end{align*}
Similarly, then note that, since $\pi(v) \geq \frac{\delta}{n}$ for all $v \in G_n$ deterministically,
\begin{align*}
\left|\E{\Capp_M(U_{\pi})} - M \E{\pi(U_{\pi})} \right| &= \left|\E{\Capp_M(U_{\pi})} - \sum_{i=0}^{M-1} \prstart{Z_i = U_{\pi}}{\pi} \right| \\
&\leq \prstart{\exists 0 \leq t_1 < t_2 \leq M-1:Z_{t_1} = Z_{t_2} = U_{\pi}}{\pi} \leq \left(\frac{M}{\delta n}\right)^2 \leq \frac{M}{\delta^3 n} \cdot M\E{\pi(U_{\pi})}.
\end{align*}
To conclude, we combine to get that
\begin{align*}
\alpha_n = \frac{n}{Mn^{\eps/2}} \E{\Capp_M(X[0,n^{\eps/2}))} = \frac{n}{M} \E{\Capp_M(U_{\pi})}(1+o(1))=n \E{\pi(U_{\pi})}(1+o(1)) = \tilde{\alpha}_n(1+o(1)),
\end{align*}
as required.
\end{proof}
It therefore follows that the convergence of \eqref{eqn:GHP conv} holds with the sequence $(\tilde{\alpha}_n)_{n \geq 1}$ in place of $({\alpha}_n)_{n \geq 1}$. To prove main convergence theorem, it is therefore sufficient to show that, under the assumptions of \cref{thm:main1},
\begin{equation}\label{eqn:tilde alpha convergence}
    \tilde{\alpha}_n \to \alpha_W,
\end{equation}
where $\alpha_W$ is as in \eqref{eqn:alphaW}.

\begin{remark}\label{rmk:regular alpha=1}
Note that $\tilde{\alpha}_n$ is $1$ when $G_n$ is regular, so clearly \eqref{eqn:tilde alpha convergence} will entail that $\alpha_W=1$ for a regular graph sequence.
\end{remark}

Our next goal is to show the following.

\begin{claim}\label{claim:alphan convergence}
Let $W:[0,1]^2 \to [0,1]$ be a connected graphon and let $G_n$ be a sequence of graphs with stationary measures $\pi_n$ converging in cut-distance to $W$. Then, 
\begin{equation*}
\tilde{\alpha}_n := n\estart{\pi_n(v)}{\pi_n} \to \frac{1}{\left(\int_{[0,1]^2}W(x,y)dxdy\right)^2} \cdot \int_{[0,1]} \left(\int_{[0,1]}W(x,y)dy\right)^2 dx.
\end{equation*}
\end{claim}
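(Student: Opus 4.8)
The plan is to show that $\tilde\alpha_n$ is \emph{exactly} the functional $\alpha_{(\cdot)}$ of \eqref{eqn:alphaW} evaluated at the graphon representation $W_{G_n}$, and then to prove that this functional is continuous along the sequence $(W_{G_n})_{n\ge1}$ with respect to the cut-distance.

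\textbf{Step 1: the exact identity $\tilde\alpha_n=\alpha_{W_{G_n}}$.} Writing $D_n=\sum_{v\in V(G_n)}\deg v$ we have $\pi_n(v)=\deg(v)/D_n$, so $\tilde\alpha_n=n\sum_v\pi_n(v)^2=n\sum_v\deg(v)^2/D_n^2$. On the other hand the step function $\deg_{W_{G_n}}$ equals $\deg(v_i)/n$ on the interval $[(i-1)/n,i/n]$, whence $\int_{[0,1]^2}W_{G_n}=\int_0^1\deg_{W_{G_n}}=D_n/n^2$ and $\int_0^1\deg_{W_{G_n}}(x)^2\,dx=\sum_i\deg(v_i)^2/n^3$. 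Substituting into \eqref{eqn:alphaW} gives $\alpha_{W_{G_n}}=n\sum_v\deg(v)^2/D_n^2=\tilde\alpha_n$. Hence it suffices to show that $\delta_\square(W_{G_n},W)\to0$ forces $\alpha_{W_{G_n}}\to\alpha_W$.

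\textbf{Step 2: continuity.} Write $N(U)=\int_0^1\bigl(\int_0^1U(x,y)\,dy\bigr)^2dx=\|\deg_U\|_{L^2}^2$ and $D(U)=\bigl(\int_{[0,1]^2}U\bigr)^2$, so that $\alpha_U=N(U)/D(U)$; crucially, both $N$ and $D$ are invariant under measure-preserving automorphisms of $[0,1]$ (since $\deg_{U^\varphi}=\deg_U\circ\varphi$), which is what lets us pass from the cut-norm to the cut-distance. I would pick measure-preserving automorphisms $\varphi_n$ with $\|W_{G_n}^{\varphi_n}-W\|_\square\to0$. Taking $T=[0,1]$, and then $T$ equal to the complement of the positivity set of $\deg_{W_{G_n}^{\varphi_n}}-\deg_W$, in the supremum defining the cut-norm yields $\|\deg_{W_{G_n}^{\varphi_n}}-\deg_W\|_{L^1}\le 2\|W_{G_n}^{\varphi_n}-W\|_\square\to0$; since every degree function takes values in $[0,1]$, the difference lies in $[-1,1]$, so this upgrades to $\|\deg_{W_{G_n}^{\varphi_n}}-\deg_W\|_{L^2}^2\le\|\deg_{W_{G_n}^{\varphi_n}}-\deg_W\|_{L^1}\to0$, and the reverse triangle inequality in $L^2$ gives $N(W_{G_n})=N(W_{G_n}^{\varphi_n})\to N(W)$. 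Similarly $\int_{[0,1]^2}W_{G_n}=\int_{[0,1]^2}W_{G_n}^{\varphi_n}\to\int_{[0,1]^2}W$ by taking $S=T=[0,1]$, so $D(W_{G_n})\to D(W)$. Finally, connectivity forces $\int_{[0,1]^2}W>0$ (otherwise $W=0$ a.e., contradicting $\int_A\int_{A^c}W>0$ for $A=[0,1/2]$), so $D(W)>0$ and $\alpha_{W_{G_n}}=N(W_{G_n})/D(W_{G_n})\to N(W)/D(W)=\alpha_W$; combined with Step 1 this is the claim.

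\textbf{Main obstacle.} The only point needing care is the continuity of the numerator $N$: the cut-distance is a weak topology that does not in general control $L^2$-type functionals of a graphon, and the argument genuinely relies on degree functions being uniformly bounded in $[0,1]$, so that the $L^1$ control one gets directly from the cut-norm converts into $L^2$ control. The exact identity in Step 1, the automorphism-invariance, and the lower bound $D(W)>0$ are all routine (indeed, if one prefers, $D(W)>0$ also follows from $\int W_{G_n}\ge\delta$ under the density hypothesis of \cref{thm:main1}).
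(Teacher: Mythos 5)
Your proposal is correct and, despite the cleaner framing, follows essentially the same route as the paper: both arguments reduce to showing that $\int_{[0,1]^2}W_n\to\int_{[0,1]^2}W$ and $\int_0^1\deg_{W_n}^2\to\int_0^1\deg_W^2$ for the (automorphism-adjusted) graphon representations $W_n$, and both obtain the second convergence by combining $L^1$ control of $\deg_{W_n}-\deg_W$ from the cut-norm with the uniform bound $\deg_{W_n},\deg_W\in[0,1]$. The paper does this by splitting on the positivity set $S$ of $\deg_{W_n}-\deg_W$ and factoring $\int_S(\deg_{W_n}^2-\deg_W^2)=\int_S(\deg_{W_n}-\deg_W)(\deg_{W_n}+\deg_W)\le 2\int_S\int(W_n-W)$, whereas you package the same estimate as $\|\deg_{W_n}-\deg_W\|_{L^2}^2\le\|\deg_{W_n}-\deg_W\|_{L^1}\le 2\|W_n-W\|_\square$ together with the reverse triangle inequality in $L^2$; these are cosmetic variants of the same bound. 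Two things you do a bit more crisply than the paper: you isolate the exact identity $\tilde\alpha_n=\alpha_{W_{G_n}}$ (the paper uses it implicitly when it rewrites the discrete sums as integrals of $W_n$), and you explicitly note that connectivity of $W$ forces $\int_{[0,1]^2}W>0$ so that the denominator is nonzero in the limit.
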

\begin{proof}
We will begin with showing that 
\begin{equation}\label{eqn:graphon degree convergence}
    \frac{1}{n^2}\sum_{v\in V} \deg_{G_n}(v) \to \left(\int_{[0,1]^2}W(x,y)dxdy\right).
\end{equation}
Indeed, as $G_n \to W$ in the cut-distance, there exist $\phi_n$ measure-preserving automorphisms such that the graphon representations $W_n$ of $G_n$ satisfy
\begin{equation*}
    \sup_{S,T \in \mathcal{B}([0,1])}
    \left|\int_{S}\int_{T} W^{\phi_n}_n(x,y) - W(x,y) dx dy\right| \to 0.
\end{equation*}
We henceforth write $W_n$ in place of $W^{\phi_n}_n$. In particular, choosing $S=T=[0,1]$ we obtain that
\begin{equation*}
    \int_{[0,1]}\int_{[0,1]} W_n(x,y) dx dy \to \int_{[0,1]}\int_{[0,1]}W(x,y).
\end{equation*}
However, by the definition of a graphon representation of a graph given in \cref{sctn:graphon background}, we also have that
\begin{equation*}
    \int_{[0,1]}\int_{[0,1]} W_n(x,y) dx dy = \frac{1}{n^2}\sum_{v\in V} \deg_{G_n}(v),
\end{equation*}
which establishes \eqref{eqn:graphon degree convergence}.
Next, we will show that 
\begin{equation}\label{eqn:graphon degree squared convergence}
    \frac{1}{n^3} \sum_{v\in G_n} \deg_{G_n}(v)^2 \to \int_{[0,1]} \left(\int_{[0,1]}W(x,y)dy\right)^2 dx
\end{equation}
Note that in every graphon representation $W_n$ of $G_n$ we have that
\begin{equation*}
    \frac{1}{n^3}\sum_{v\in G_n}\deg_{G_n}(v)^2 = \frac{1}{n}\sum_{v\in G_n}\left(\frac{\deg_{G_n}(v)}{n}\right)^2 = \frac{1}{n} \sum_{v\in G_n} \left(\int_{[0,1]}W_n(x_v,y) dy\right)^2,
\end{equation*}
where $x_v$ is some point in $[0,1]$ corresponding to $v$. Moreover, in the notation of \cref{sctn:graphon background}, it follows from the construction given there that
\begin{equation*}
    \frac{1}{n} \sum_{v\in G_n} \left(\int_{[0,1]}W_n(x_v,y) dy\right)^2 = \sum_{i=1}^n \int_{I_i}\left(\int_{[0,1]}W_n(x,y) dy\right)^2 dx = \int_{[0,1]}\left(\int_{[0,1]}W_n(x,y) dy\right)^2 dx.
\end{equation*}
To establish \eqref{eqn:graphon degree squared convergence}, it thus suffices to prove that
\begin{equation*}
    \int_{[0,1]}\left(\int_{[0,1]}W_n(x,y) dy\right)^2 dx \to  \int_{[0,1]} \left(\int_{[0,1]}W(x,y)dy\right)^2 dx.
\end{equation*}
In other words, writing $\deg_W$ and $\deg_{W_n}$ for the corresponding normalized degree functions of the graphons $W$ and $W_n$ as defined in \eqref{eqn: deg of graphon}, we need to show that
\begin{equation}\label{eqn:degree squared convergence 2}
    \int_{[0,1]} \left(\deg_{W_n}(x)^2 - \deg_{W}(x)^2\right) dx \to 0.
\end{equation}
As $\deg_{W_n}$ and $\deg_{W}$ are measurable functions, we have that the set $\{x \in [0,1] : \deg_{W_n}(x) > \deg_W(x)\}$ is measurable. Denote this set by $S$. We have that
\begin{align*}
    \int_{S} \deg_{W_n}(x)^2 - \deg_{W}(x)^2 dx &= \int_{S} (\deg_{W_n}(x) - \deg_{W}(x))(\deg_{W_n}(x) + \deg_{W}(x)) dx \\&\leq 2\int_{S}(\deg_{W_n}(x) - \deg_{W}(x)) dx = 2\int_{S}\int_{[0,1]}W_n(x,y) - W(x,y) dy dx \to 0.
\end{align*}
By symmetry and considering $S^c$ we similarly have that
\begin{equation*}
    \int_{S^c} \deg_{W_n}(x)^2 - \deg_{W}(x)^2 dx \to 0,
\end{equation*}
from which we conclude that \eqref{eqn:degree squared convergence 2} and therefore \eqref{eqn:graphon degree squared convergence} hold.
Finally, given \eqref{eqn:graphon degree convergence} and \eqref{eqn:graphon degree squared convergence}, note that
\begin{align*}
\tilde{\alpha}_n &= n \sum_{v \in G_n} \left(\frac{\deg(v)}{\sum_{v\in G_n}\deg(v)}\right)^2 = n\cdot\frac{1}{\left(\sum_{v\in G_n}\deg(v)\right)^2}\sum_{v \in G_n}(\deg(v)^2) \\&= \left(n^2 \cdot \frac{1}{\sum_{v \in G_n} \deg(v)}\right)^2\cdot\left(\sum_{v\in G_n}\frac{1}{n^3}\deg(v)^2\right) \to \frac{1}{\left(\int_{[0,1]^2}W(x,y)dxdy\right)^2} \cdot \int_{[0,1]} \left(\int_{[0,1]}W(x,y)dy\right)^2 dx,
\end{align*}
as required.
\end{proof}

\begin{proof}[Proof of \cref{thm:main1}]
We showed in \cref{claim:graphon is an expander} that under the assumptions of \cref{thm:main1}, the graph sequence in question is an expander sequence, so that \cref{thm:main2} applies. In \cref{claim:alpha n equivalence} and \cref{claim:alphan convergence}, we showed that the sequence $\alpha_n$ appearing in the conclusion of \cref{thm:main2} converges to $\alpha_W$ as $n \to \infty$, exactly as required.
\end{proof}

\cref{cor:mainrandom} is a direct consequence of \cref{thm:main1} and \cref{lem:random graphons are close}.

\bibliographystyle{abbrv}
\bibliography{biblio}

\end{document}